\documentclass[11pt,a4paper,final]{article}

\newif\ifsiamart
\makeatletter%
\@ifclassloaded{siamart171218}%
  {\siamarttrue}%
  {\siamartfalse}%
\makeatother%

\ifsiamart\else
\usepackage{authblk}
\fi
\usepackage{silence}
\WarningFilter{latex}{Overwriting file}
\usepackage[T1]{fontenc}
\usepackage[utf8]{inputenc}
\usepackage{csquotes}
\usepackage[UKenglish]{babel}
\usepackage{paralist}
\usepackage[shortlabels]{enumitem}
\usepackage[margin=.6in]{geometry}
\usepackage{xcolor}
\usepackage{graphicx}
\usepackage{bbm}
\usepackage{amsmath}
\usepackage{amssymb}
\usepackage{amsfonts}
\usepackage{stmaryrd}
\usepackage{mathrsfs}
\usepackage{mathtools}
\usepackage{epstopdf}
\usepackage{comment}

\usepackage{xspace}
\usepackage{float}
\usepackage{stackengine}

\ifsiamart\else
\usepackage{amsthm}
\usepackage{hyperref}
\hypersetup{%
    linktocpage=true,
    colorlinks=true,
    citecolor=red,
    linkcolor=blue,
    urlcolor=blue,
}
\usepackage[nameinlink,capitalise]{cleveref}
\newcommand{\email}[1]{\href{mailto:#1}{#1}}
\newenvironment{keywords}{\noindent \textbf{Keywords.}}{}
\newenvironment{AMS}{\noindent \textbf{AMS subject classification.}}{}
\fi

\usepackage{setspace}
\onehalfspacing

\usepackage{amssymb,bbm,mathrsfs}
\usepackage{mathtools}
\usepackage{multicol}
\usepackage[normalem]{ulem}
\usepackage{caption}
\usepackage{subcaption}
\usepackage{array}
\usepackage{centernot}

\DeclareMathOperator{\supp}{supp}
\def\A{\mathcal A}
\def\B{\mathcal B}
\def\W{\mathcal W}

\def\Hc{\mathcal H}
\def\L{\mathcal L}

\def\P{\mathcal P}

\def\R{\mathbb R}

\newcommand{\Wass}{\mathscr{W}}

\newcommand{\rhot}{\tilde{\rho}}

\newcommand{\lambdat}{\tilde{\lambda}}


\newcommand{\mb}{\mathbb}
\DeclareMathOperator*{\E}{\mb{E}}

\DeclareMathOperator{\id}{id}
\DeclareMathOperator{\Id}{I}

\newcommand{\eps}{\varepsilon}

\newcommand{\Wbar}{\overline{\mathscr{W}_2}}

\newcommand{\Hess}[1]{\text{Hess}\left( #1 \right)}
\DeclareMathOperator*{\argmin}{arg\,min}

\renewcommand{\d}{\mathsf{d}}
\newcommand{\ddt}{\frac{\d}{\d t}}

\newcommand{\dds}{\frac{\d}{\d s}}

\newcommand{\ddss}{\frac{\d^2}{\d s^2}}

\newcommand{\norm}[1]{\left\lVert#1\right\rVert}
\renewcommand{\div}[1]{\mathrm{div}\left( #1 \right)}










\def\<#1,#2>{\left\langle #1,\,#2\right\rangle}



\ifsiamart
\newtheorem{remark}[theorem]{Remark}
\newtheorem{assumption}{Assumption}
\else
\theoremstyle{plain}

\newtheorem{theorem}{Theorem}
\newtheorem{lemma}[theorem]{Lemma}
\newtheorem{corollary}[theorem]{Corollary}
\newtheorem{proposition}[theorem]{Proposition}

\newtheorem{remark}[theorem]{Remark}
\newtheorem{definition}[theorem]{Definition}
\newtheorem{example}[theorem]{Example}
\newtheorem{assumption}{Assumption}

\makeatletter
\newcommand{\mathleft}{\@fleqntrue\@mathmargin0pt}
\newcommand{\mathcenter}{\@fleqnfalse}
\makeatother
\fi

\crefname{lemma}{Lemma}{Lemmas}
\crefname{remark}{Remark}{Remarks}
\crefname{assumption}{Assumption}{Assumptions}
\crefname{proposition}{Proposition}{Propositions}
\crefname{section}{Section}{Sections}
\crefname{subsection}{Subsection}{Subsections}
\crefname{equation}{}{}
\Crefname{equation}{Equation}{Equations}
\newlist{lemmaenum}{enumerate}{3}
\setlist[lemmaenum]{label=(\alph*),ref=\,(\alph*)}
\crefname{lemmaenum}{Lemma}{Lemmas}
\newlist{assumpenum}{enumerate}{5}
\setlist[assumpenum]{label=(\alph*), font={\bfseries}}
\newlist{auxenum}{enumerate}{2}
\setlist[auxenum]{label=(\alph*),ref=(\alph*)}
\crefname{auxenumi}{Item}{Items}
\crefname{enumi}{}{}
\crefname{equation}{}{}
\crefname{assumpenumi}{}{}
\crefname{assumpenumii}{}{}
\Crefname{assumpenumi}{Assumption}{Assumptions}
\Crefname{assumpenumii}{Assumption}{Assumptions}
\Crefname{assumpenumii}{Assumption}{Assumptions}
\crefrangeformat{assumpenumi}{#3#1#4--#5#2#6}
\Crefname{lemmaenumi}{Part}{Parts}
\Crefname{figure}{Figure}{Figures}
\numberwithin{equation}{section}
\numberwithin{theorem}{section}

\usepackage{tikz}
\usetikzlibrary{shapes.misc}
\usetikzlibrary{positioning}

\usepackage[url=true,backend=biber,style=trad-abbrv,url=false,doi=false,isbn=false]{biblatex}
\addbibresource{references.bib}
\DeclareFieldFormat{volume}{volume \textbf{#1}}
\DeclareFieldFormat[article]{volume}{\textbf{#1}}

\ifsiamart\else
\let\oldparagraph=\paragraph
\renewcommand\paragraph[1]{\oldparagraph{#1.}}
\fi

\emergencystretch=1em





\newcommand{\Pbar}{ \overline{\mathcal{ P}_2}}
\newcommand{\divs}[2]{\mathrm{div}_{#1}\left( #2 \right)}
\newcommand{\ndr}{\overline {\nabla \delta_\rho } }

\newcommand{\nnddr}{\overline {\nabla^2 \delta^2_\rho}}
\newcommand{\nndr}{\overline {\nabla^2 \delta_\rho}}

\newcommand{\Pf}{\P^{(f)}}
\newcommand{\Pfbar}{\overline{\P}^{(f)}}
\newcommand{\rhoni}{\rho_{-i}} 

\newcommand{\rhotau}{\rho^{(\tau)}}

\newcommand{\rhoone}{\rho^{(1)}}

\newcommand{\rhozero}{\rho^{(0)}}

\newcommand{\rhos}{\rho^{(s)}}

\newcommand{\nv}{\overline{\nabla \varphi}}

\newcommand{\iin}{i\in[1,\dots,n]}
\newtheorem{property}{Property}

\newcommand{\vs}{v^{(s)}}
\newcommand{\ws}{w^{(s)}}

\begin{document}
\title{Monotone Multispecies Flows}

\ifsiamart
\else
    \author[1]{Lauren Conger}
    \author[1]{Franca Hoffmann}
    \author[1]{Eric Mazumdar}
    \author[2]{Lillian J. Ratliff}
    \affil[ ]{\footnotesize \email{lconger@caltech.edu},
        \email{franca.hoffmann@caltech.edu},
    \email{mazumdar@caltech.edu},
    \email{ratliffl@uw.edu}}
    \affil[1]{\footnotesize Department of Computing and Mathematical Sciences, Caltech, USA}
    \affil[2]{\footnotesize Department of Electrical and Computer Engineering, University of Washington, USA}
    \date{}
\fi

\maketitle

\begin{abstract}
    We present a novel notion of $\lambda$-monotonicity for an $n$-species system of  partial differential equations governed by mass-preserving flow dynamics, extending monotonicity in Banach spaces to the  Wasserstein-2 metric space. We show that monotonicity implies the existence of and convergence to a unique steady state, convergence of the velocity fields and second moments, and contraction in the Wasserstein-2 metric, at rates dependent on $\lambda$.
    In the special setting of Wasserstein-2 gradient descent of different energies for each species, we prove convergence to the unique Nash equilibrium of the associated energies and delineate the relationship between monotonicity and displacement convexity. This extends known zero-sum results in infinite-dimensional game theory to the general-sum setting. We provide a number of examples of monotone coupled gradient flow systems, including cross-diffusion, gradient flows with potentials, nonlocal interaction, linear and nonlinear diffusion, and min-max systems, and draw connections to a class of mean-field games. Numerically, we demonstrate convergence of a four-player economic model for service providers and strategic users competing in a market, and a degenerately monotone game.
\end{abstract}

\begin{keywords}
    monotonicity, Wasserstein gradient flow, game theory, strategic learning, interacting species
\end{keywords}

\begin{AMS}
35G50, 91A06, 35B40.
\end{AMS}

\section{Introduction}\label{sec:intro}

In the analysis of nonlinear dynamical systems, understanding the long-time behavior is important for modeling applications.  When the dynamical system evolves via an operator on Banach spaces, \emph{monotonicity} has proven to be a useful criteria under which convergence can be shown. In this work, we investigate mass-preserving flows, which are not in a normed space, and develop a notion of monotonicity in Wasserstein space which gives similar results to those for Banach spaces. 
Mass-preserving flow equations are commonly used to describe physical systems, such as fluid flow, biological systems~\cite[Ch 8]{perthame_parabolic_2015}, traffic~\cite{marcellini_ode-pde_2016}, and increasingly appear in emerging areas like generative modeling~\cite{kwon_score-based_2022}, opinion dynamics~\cite{during_boltzmann_2009}, and strategic learning environments~\cite{conger_strategic_2023,conger_coupled_2024}. Importantly, the monotonicity condition we propose has a structure that allows for systems where there is more than one entity evolving, such as in coupled flow equations. Understanding the long-time behavior of the coupled dynamics is a key question in these applications; in particular, monotonicity allows us to determine the existence of and convergence to a unique steady state.

Our notion of monotonicity is a natural extension of the well-studied condition for Euclidean dynamical systems in finite dimensions and Banach spaces in infinite dimensions, providing a condition for contraction of nonlinear dynamics. We show how this idea can be applied to velocity fields driving the evolution of probability measures. In the monotone flow setting, we consider an $n$--species system where each species $\rho_i\in\P(\R^{d_i})$, with $\P(\R^{d_i})$ the space of probability measures on $\R^{d_i}$, evolves according to a velocity field $v_i[\rho]:\R^{d_i}\to \R^{d_i}$, which can depend on all species, denoted $\rho=[\rho_1,\dots,\rho_n]$. The mass of each $\rho_i$ is conserved due to the divergence structure. We prove that monotonicity is a sufficient condition for dynamics of the form
\begin{align}\label{eq:monotone_flow_dynamics}
    \partial_t \rho_i +\div{\rho_i v_i[\rho]}=0 \quad \forall\, \iin\,,
\end{align}
to contract in the joint Wasserstein-2 metric.

We additionally study a particular choice of velocity fields which results in a system of coupled gradient flows. In the Euclidean setting, all norms are equivalent, whereas in infinite dimensions, the choice of metric determines the topology and the gradient flow trajectory. We focus on the Wasserstein-2 metric, which results in a system of partial differential equations (PDEs) where the evolution of each species is written as a gradient flow with respect to its own energy. Let $\P_2(\R^{d})$ be the space of probability measures with bounded second moments on $\R^d$, denote the product space $\Pbar=\P_2(\R^{d_1}) \times \cdots \times \P_2(\R^{d_n})$, and denote by $\Wass_2$ the Wasserstein-2 metric (see \cref{sec:appendix} for details). Consider energy functionals $F_i:\A\subseteq \Pbar \to \R$, for a suitable choice of domain $\A\subseteq \Pbar$. Selecting $v_i[\rho]=-\nabla_{x_i} \delta_{\rho_i} F_i[\rho]$ results in a system in which each species evolves in the direction of steepest descent of $F_i$ in the Wasserstein-2 metric, given by 
\begin{align}\label{eq:coupled_gradient_flow_dynamics}
     \partial_t \rho_i = -\nabla_{\Wass_2,\rho_i} F_i[\rho] = \divs{x_i}{\rho_i \nabla_{x_i} \delta_{\rho_i} F_i[\rho]}\quad \forall\, \iin\,.
\end{align}
This system also has a game theoretic interpretation; in a game where each player $i$ has a cost functional $F_i$ which it seeks to minimize, the player $i$ evolves a distribution $\rho_i\in \P_2(\R^{d_i})$ to minimize a cost functional $F_i[\rho_i,\rhoni]=F_i[\rho]:\A \rightarrow \R$, where $\rhoni$ denotes the non-$i^{th}$ player measures.
We show that monotonicity implies displacement convexity of each $F_i$; however, displacement convexity of each $F_i$ is not enough to ensure monotonicity because monotonicity also depends on the coupling structure among all the species. Additionally, under mild conditions, the steady state of \eqref{eq:coupled_gradient_flow_dynamics} corresponds to the unique Nash equilibrium of the game characterized by $(F_i)$.  

A primary contribution of this work is the definition of monotonicity for velocity fields and energy functionals in this setting.
\sloppy Denote by $\Gamma_i(\rhozero_i,\rhoone_i)$ the set of couplings whose marginals are $\rhozero_i,\rhoone_i\in\P_2(\R^{d_i})$, and let $\Gamma(\rhozero,\rhoone)\coloneqq \Gamma_1(\rhozero_1,\rhoone_1)\times \dots \times \Gamma_n(\rhozero_n,\rhoone_n)$. Let $\Gamma^*$ be the set of optimal transport plans, where $\Gamma_i^*=\{\gamma_i\, : \, \gamma_i\in\argmin_{\hat \gamma_i\in\Gamma_i(\rhozero_i,\rhoone_i)} \int \norm{x_i-y_i}^2\d\hat \gamma_i(x_i,y_i)$. Let $x=[x_1,\dots,x_n]\in\R^d$, with $d=\sum_{i=1}^n d_i$. Let $\Wbar$ be the joint metric over all species, given by $\Wbar(\rho,\mu)^2=\sum_{i=1}^n \Wass_2(\rho_i,\mu_i)^2$. 
The velocity field $v[\rho]:\R^d\to\R^d$ is given by
\begin{align*}
    v[\rho] := \begin{bmatrix}
        v_1[\rho] \\ \vdots \\ v_n[\rho] 
    \end{bmatrix}\,.
\end{align*}
\begin{definition}[Monotonicity]\label{def:monotonicity}
    Let $\A\subseteq\Pbar$ be a geodesically convex set. If for any $\rhozero,\rhoone\in\A$, 
\begin{align}\label{eq:dissipation-inequality}
     -\int \<x-y,v[\rhozero](x)-v[\rhoone](y)> \d \gamma(x,y) \ge \lambda \int \norm{x-y}^2 \d \gamma(x,y)=  \lambda\Wbar(\rhozero,\rhoone)^2  \,,
    \end{align}
    for all $\gamma\in\Gamma^*(\rhozero,\rhoone)$, then the velocity fields $\{v_i:\A \to \R^{d_i}\}_{i=1}^n$ are $\lambda$-monotone in $\A$. Similarly, we say the energy functionals $\{F_i:\A\to \R\}_{i=1}^n$ are $\lambda$-monotone in $\A$ if $v_i[\rho]=-\nabla_{x_i}\delta_{\rho_i} F_i[\rho]$ are $\lambda$-monotone.
\end{definition}
This definition of monotonicity is a multispecies version of the dissipativity condition given in \cite{cavagnari_dissipative_2023}, for single-valued $v$ instead of set-valued $v$.
Some texts use the word \textit{accretive} instead of monotone to describe operators which satisfy positive-definiteness over a Hilbert space \cite{brezis_hilleyosida_2011}; however, the space of measures endowed with the Wasserstein-2 metric is not a Hilbert space, and so one can think of our notion of monotonicity as an extension of the notion of accretivity.
It was recently shown in \cite[Theorem 7.6]{cavagnari_lagrangian_2023} that if a velocity field is dissipative, the dissipation inequality holds for all admissible plans, not just optimal couplings. In particular, when the dissipation inequality \eqref{eq:dissipation-inequality} holds for all pairs of measures on $\P_2$, it holds for all optimal plans between a pair of measures if and only if it holds for all admissible plans between those two measures. We prove a similar type of result for the multispecies setting.
\begin{theorem}[Admissible Couplings]\label{thm:admissible_couplings} Let $v:\Pbar \times \R^d \to \R^d$ be demicontinuous according to \cref{def:demicontinuity_v}.
    If for any $\rhozero,\rhoone \in \Pbar$ and $\gamma^* \in \Gamma^*(\rhozero,\rhoone)$ it holds 
    \begin{align*}
        -\int \<x-y,v[\rhozero](x)-v[\rhoone](y)> \d \gamma^*(x,y) \ge \lambda \int \norm{x-y}^2 \d \gamma^*(x,y)\,,
    \end{align*}
    then for any $\gamma\in \Gamma(\rhozero,\rhoone)$,
    \begin{align*}
        -\int \<x-y,v[\rhozero](x)-v[\rhoone](y)> \d \gamma(x,y)\ge  \lambda \int \norm{x-y}^2 \d \gamma(x,y) \,.
    \end{align*}
\end{theorem}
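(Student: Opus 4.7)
My plan is to mirror the single-species argument of \cite{cavagnari_lagrangian_2023} (see their Theorem~7.6) adapted to the multispecies setting. The starting observation is that the dissipation inequality decomposes as a sum over species: since each $v_i[\rho](x)$ depends on $x$ only through $x_i$, we have
\[
\int \<x-y,v[\rhozero](x)-v[\rhoone](y)> \d\gamma(x,y) = \sum_{i=1}^n \int \<x_i-y_i,v_i[\rhozero](x_i)-v_i[\rhoone](y_i)> \d\gamma_i(x_i,y_i),
\]
and analogously $\int \|x-y\|^2\,\d\gamma = \sum_i \int \|x_i-y_i\|^2\,\d\gamma_i$. Thus both hypothesis and conclusion are sums of per-species integrals, each linear in a single $\gamma_i$, which opens the door to a species-by-species reduction.

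First I would extract a per-species version of the hypothesis. Fix a species $i$ and a ``frozen'' configuration $\sigma_{-i}\in\prod_{j\ne i}\P_2(\R^{d_j})$, and apply the hypothesis to $\rhozero=(\mu^0,\sigma_{-i})$ and $\rhoone=(\mu^1,\sigma_{-i})$. Choosing the diagonal coupling $\gamma_j^{\ast}=(\id,\id)_\#\sigma_j$ (which is optimal for identical marginals and concentrated on $\{x_j=y_j\}$) for every $j\ne i$ kills all $j\ne i$ contributions. The sum inequality collapses to the single-species statement that the partial velocity field $\mu\mapsto v_i[(\mu,\sigma_{-i})]$ is $\lambda$-dissipative on every optimal plan in $\Gamma_i^{\ast}(\mu^0,\mu^1)$. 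Because the frozen $\sigma_{-i}$ and the measures $\mu^0,\mu^1$ are arbitrary, the single-species version of \cite[Theorem~7.6]{cavagnari_lagrangian_2023} applies and upgrades the inequality to all admissible $\gamma_i\in\Gamma_i(\mu^0,\mu^1)$, for any frozen non-$i$ configuration.

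The main obstacle is the final synthesis into the desired multispecies admissible-plan inequality when $\rhoni^0\ne\rhoni^1$ for every $i$: the per-species result only controls $v_i$ at measure tuples sharing their non-$i$ entries, while we need to compare $v_i[\rhozero]$ and $v_i[\rhoone]$ whose non-$i$ coordinates differ. To bridge this, I would introduce intermediate configurations $\rho^{(0)}=\rhoone,\rho^{(1)},\ldots,\rho^{(n)}=\rhozero$ where $\rho^{(k)}$ matches $\rhozero$ on species $\{1,\dots,k\}$ and $\rhoone$ elsewhere, and telescope $v_i[\rhozero]-v_i[\rhoone]=\sum_{k}(v_i[\rho^{(k)}]-v_i[\rho^{(k-1)}])$. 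The increment in which species $i$ itself varies is controlled by the per-species admissible-plan dissipativity just established; the cross-species increments, where $\rho_j$ changes for some $j\ne i$ but affects $v_i$, must be bounded by reapplying the original multispecies hypothesis with an optimal coupling between consecutive tuples $\rho^{(k-1)},\rho^{(k)}$ and summing over $k$. The technical challenge I anticipate is the bookkeeping: verifying that the per-species and cross-species contributions recombine consistently to recover exactly $\lambda\Wbar(\rhozero,\rhoone)^2$ on the right-hand side (rather than a constant multiple of it), and that no extraneous residual terms survive the telescoping.
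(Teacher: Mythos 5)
There is a genuine gap in your synthesis step, and I do not think it can be repaired along the lines you sketch. Your species-wise reduction --- freezing $\sigma_{-i}$ and choosing diagonal couplings for $j\ne i$ --- only ever extracts from the joint hypothesis the statement that each partial map $\mu\mapsto v_i[(\mu,\sigma_{-i})]$ is $\lambda$-dissipative. That is strictly weaker than the joint monotonicity you started from: the paper's \cref{ex:convexity_neq_monotonicity} exhibits energies for which every $F_i[\cdot,\rhoni]$ is $\lambda$-displacement convex (so every frozen partial field is dissipative) while the system is not monotone. Moreover, the extra ingredient you invoke to control the cross-species increments --- applying the multispecies hypothesis to consecutive tuples $\rho^{(k-1)},\rho^{(k)}$ differing in a single coordinate --- yields nothing new: when two tuples share all marginals except the $k$-th, the unique optimal coupling in each coordinate $j\ne k$ is the diagonal one, so every term with $j\ne k$ vanishes identically and you recover only the per-species statement for $v_k$ again. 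The increments $v_i[\rho^{(k)}]-v_i[\rho^{(k-1)}]$ with $i\ne k$ therefore remain completely uncontrolled (and the telescoping is not even cleanly defined, since $v_i[\rhozero]$ is evaluated at $x_i$ while $v_i[\rhoone]$ is evaluated at $y_i$). The bookkeeping worry you flag at the end is not a technicality; it is where the argument breaks.

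The paper telescopes in a different variable: time along the interpolation, not the species index. Given an arbitrary admissible $\gamma=\prod_i\gamma_i$ between finitely supported tuples, one sets $\rho_i^{(\tau)}=(I_i^{(\tau)})_\sharp\gamma_i$ and invokes the local optimality of discrete interpolations \cite[Theorem 6.2]{cavagnari_lagrangian_2023}: each such interpolation is piecewise a minimal constant-speed geodesic, and on each piece the restricted plan $[I_i^{(s)},I_i^{(t)}]_\sharp\gamma_i$ is optimal. Taking the union of the finitely many breakpoints over all species gives a common partition on which the restricted plans are \emph{simultaneously} optimal for every species, so the optimal-plan hypothesis applies on each subinterval; after rescaling by the interval lengths, the resulting inequalities sum exactly to the claimed bound for $\gamma$ because every term is an integral against the same fixed $\gamma$. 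General measures in $\Pbar$ are then handled by the approximation procedure of \cite[Theorem 7.6]{cavagnari_lagrangian_2023}. If you want to salvage your outline, the piece to import is precisely this piecewise-geodesic lemma with synchronized breakpoints; the per-species extraction can be discarded entirely.
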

For a proof of this theorem, and a counterexample of why this result does not hold when $v$ is $\lambda$-monotone on an arbitrary geodesically convex subset $\A\subseteq\Pbar$, see \cref{sec:optimal_admissible_plans}.  
When the velocity field takes the form of a coupled gradient flow, we use the following notation for the row-wise gradient of the first variation 
    \begin{align*}
        \ndr F[\rho](x) \coloneqq \begin{pmatrix}
             \nabla_{x_1} \delta_{\rho_1} F_1[\rho](x_1) \\
        \vdots\\
        \nabla_{x_n} \delta_{\rho_n} F_n[\rho](x_n)
    \end{pmatrix}\,.
    \end{align*}
Then $\lambda$-monotonicity for $F=(F_i)$ writes as
\begin{align}\label{eq:monotonicity_condition_F}
          \int \<x-y,\ndr F[\rhozero](x)-\ndr F[\rhoone](y)> \d \gamma(x,y) \ge \lambda \Wbar(\rhozero,\rhoone)^2 \,.
\end{align}

\begin{theorem}[Convergence, informal]
    Let $v$ be $\lambda$-monotone in $\A$ and $\rho(t)\in\A$ evolve according to \eqref{eq:monotone_flow_dynamics} for all $t\ge 0$. Then
    \begin{enumerate}
        \item[(a)] there exists a unique steady state $\rho^\infty$ of \eqref{eq:monotone_flow_dynamics},
        and $\rho(t)$ converges exponentially fast to $\rho^\infty$ in $\Wbar$ with rate $\lambda$, and
        \item[(b)] if $v_i[\rho] =-\nabla_{x_i}\delta_{\rho_i} F_i[\rho]$ for all $\iin$, and the energies $F_i$ are lower semicontinuous, then $\rho^\infty$ is the unique Nash equilibrium in $\A$ for the game specified by $F$.
    \end{enumerate}
\end{theorem}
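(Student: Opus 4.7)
The strategy is to compare two solutions of \eqref{eq:monotone_flow_dynamics} through the joint Wasserstein distance, deduce exponential contraction directly from \cref{def:monotonicity}, and then read off the steady-state and Nash-equilibrium statements as corollaries.

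First, fix two trajectories $\rho(t),\mu(t)\in\A$ of \eqref{eq:monotone_flow_dynamics}. For each species $i$, the continuity equation together with the standard chain rule for the squared Wasserstein distance along absolutely continuous curves in $(\P_2(\R^{d_i}),\Wass_2)$ gives, for almost every $t$ and any optimal plan $\gamma_i^*(t)\in\Gamma_i^*(\rho_i(t),\mu_i(t))$,
\begin{equation*}
    \tfrac{1}{2}\,\tfrac{\d}{\d t}\Wass_2(\rho_i(t),\mu_i(t))^2 = \int \<x_i-y_i,\, v_i[\rho(t)](x_i)-v_i[\mu(t)](y_i)> \d\gamma_i^*(t)(x_i,y_i)\,.
\end{equation*}
Summing over $i$ and invoking the monotonicity inequality \eqref{eq:dissipation-inequality} against the tuple $\gamma^*(t)=(\gamma_1^*(t),\dots,\gamma_n^*(t))\in\Gamma^*(\rho(t),\mu(t))$ yields the differential inequality $\tfrac{\d}{\d t}\Wbar(\rho(t),\mu(t))^2 \le -2\lambda\,\Wbar(\rho(t),\mu(t))^2$, and Gr\"onwall's lemma produces the contraction estimate $\Wbar(\rho(t),\mu(t))\le e^{-\lambda t}\Wbar(\rho(0),\mu(0))$ for all $t\ge 0$.

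Specializing to $\mu(t):=\rho(t+h)$ shows that $\{\rho(t)\}_{t\ge 0}$ is Cauchy in $(\Pbar,\Wbar)$ as $t\to\infty$; completeness of the joint Wasserstein space provides a limit $\rho^\infty\in\A$. Running the flow from $\rho^\infty$ and applying the contraction estimate with $\rho(0)=\rho^\infty$ forces this trajectory to be constant, so $\rho^\infty$ is a steady state. Any two steady states $\rho^a,\rho^b$ would satisfy $\Wbar(\rho^a,\rho^b)\le e^{-\lambda t}\Wbar(\rho^a,\rho^b)$ for all $t$, which is incompatible with $\lambda>0$ unless they coincide; this gives (a). For (b), at a steady state of \eqref{eq:coupled_gradient_flow_dynamics} the continuity equation forces $\nabla_{x_i}\delta_{\rho_i}F_i[\rho^\infty]=0$ on $\supp\rho_i^\infty$ for every $i$, which is the first-order Wasserstein optimality condition for $\rho_i^\infty$ to minimize $F_i[\,\cdot\,,\rho_{-i}^\infty]$. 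Since $\lambda$-monotonicity implies displacement convexity of each $F_i$ in its own argument (as stated in the introduction), this condition is sufficient, so $\rho^\infty$ is a Nash equilibrium. Uniqueness follows by applying \eqref{eq:monotonicity_condition_F} to any two Nash equilibria $\rho^a,\rho^b\in\A$: the first-order conditions make the left-hand side vanish on the supports of the optimal plan, leaving $\lambda\Wbar(\rho^a,\rho^b)^2\le 0$; lower semicontinuity of the $F_i$ ensures that the first-order identity is well defined in the limit.

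The main obstacle is the rigorous justification of the chain rule for $\Wass_2^2$ along the coupled flow: one needs each $\rho_i(\cdot)$ to be absolutely continuous in $(\P_2(\R^{d_i}),\Wass_2)$ with $v_i[\rho]\in L^2(\rho_i)$ as its Wasserstein tangent velocity, and the same regularity for $\mu(\cdot)$, so that the Ambrosio--Gigli--Savar\'e derivative formula applies simultaneously across species. Establishing that such solutions exist and remain in $\A$ is a standing regularity hypothesis (or is handled in a companion well-posedness statement), after which the contraction argument above is essentially automatic; passing to the limit $t\to\infty$ in the first-order characterization then requires only lower semicontinuity of $F_i$ and continuity of $\overline{\nabla\delta_\rho}F$ along the flow on $\A$.
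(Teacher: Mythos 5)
Your contraction argument for part (a) is the same as the paper's: differentiate $\Wbar^2$ along two solutions via the chain rule for the squared Wasserstein distance (the paper invokes \cite[Theorem 3.14]{cavagnari_dissipative_2023} under \cref{assump:loc_abs_cont_curves}), apply \eqref{eq:dissipation-inequality} to the optimal plan, and use Gr\"onwall; the existence/uniqueness of the steady state via a Cauchy/fixed-point argument is exactly what the paper outsources to \cite[Lemma 7.3]{carrillo_contractive_2007}. Two small points there: your "Cauchy as $t\to\infty$" step needs the telescoping bound $\sup_{h\in[0,1]}\Wbar(\rho(0),\rho(h))<\infty$ (which is where the absolute-continuity hypothesis is actually used), and completeness only places $\rho^\infty$ in $\Pbar$, not in $\A$ --- the paper needs compactness of $\A$ (or a case-by-case argument, cf.\ the hypothesis $\rho^\infty\in S\cap\A$) for that, and this matters because the monotonicity inequality is only available on $\A$.

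For part (b) your route genuinely differs from the paper's. You argue that the vanishing of $\nabla_{x_i}\delta_{\rho_i}F_i[\rho^\infty]$ on $\supp\rho_i^\infty$ plus displacement convexity of $F_i[\cdot,\rhoni^\infty]$ gives global minimality directly via the first-order convexity inequality \eqref{eq:first_order_convexity}; this is valid but requires the differentiability hypotheses behind that inequality. The paper instead proves that critical points are unique (\cref{lem:uniqueness_NE}, the same two-equilibria monotonicity argument you use), then uses lower semicontinuity together with displacement convexity to obtain \emph{existence} of a minimizer of $F_i[\cdot,\rhoni^\infty]$ via \cite[Lemma 2.4.8]{ags}, and identifies it with the unique critical point. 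This is where \cref{assump:F_lsc} actually enters; your closing remark that lower semicontinuity "ensures the first-order identity is well defined in the limit" does not describe its role and is not needed for your version of the argument. Neither issue is fatal, but the misplaced use of lsc and the unjustified $\rho^\infty\in\A$ are the two spots you should tighten.
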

For the full statement of this result, see \cref{thm:cv_W2}.
Key supporting results include uniform second moment bounds and convergence of the $L^2$ norm of the velocity fields, which is related to the energy dissipation in the single-species gradient flow setting. For a summary of assumptions and results, see \cref{fig:assumptions}. The monotone structure mirrors existing finite-dimensional results (see \cite{gao_continuous-time_2022} and references therein), which are outlined in what follows. 

\begin{definition}[Finite-Dimensional Monotonicity]\label{def:finite_dim_monotone_dynamics}
    Let $u=[u_1,\dots,u_n]:\R^d \to \R^d$ where $u_i\in C^1(\R^d;\R^{d_i})$, and define $J(x)\coloneqq \nabla u(x) \in \R^{d\times d}$.
    Then $u$ is  \emph{$\lambda$-monotone} if it satisfies
    \begin{align*}
        &\text{\emph{(first-order)}}\qquad \<u(x)-u(x'),x-x'> \ge \lambda \norm{x-x'}^2\quad \forall\ x,x'\in\R^d\,, \text{ or}\\
        &\text{\emph{(second-order)}}\qquad 
       J(x)+J(x)^\top \succeq 2\lambda \Id_d \quad \forall\ x\in\R^d\,.
    \end{align*}
    for some $\lambda\in\R$; $u$ is \emph{strongly monotone} if $\lambda>0$.
\end{definition}
In the finite-dimensional monotone setting, for dynamics $\dot x(t)=-u(t)$, commonly-used Lyapunov functions include
\begin{align}\label{eq:lyapunov_functions}
    V_1(x) = \frac{1}{2} \norm{x-x^*}^2 \,, \qquad V_2(x) = \frac{1}{2}\sum_{i=1}^n \norm{u_i(x)}^2 
 \,,
\end{align}
where $x^*$ satisfies $u(x^*)=0$. When $u$ is strongly monotone, it can be shown that $\dot V_1(x) \le -2\lambda V_1(x)$ using the first-order monotonicity condition, and that $\dot V_2(x) \le -2\lambda V_2(x)$ using the second-order monotonicity condition. This results in exponential convergence of $V_1$ and $V_2$ to zero, which is sufficient to conclude exponential convergence of $x$ to $x^*$. We will analyze the infinite-dimensional equivalents:
\begin{align*}
     \frac{1}{2} \norm{x-x^*}^2 \quad \Leftrightarrow \quad \frac{1}{2}\Wbar(\rho,\rho^\infty)^2\,,  \qquad\frac{1}{2}\sum_{i=1}^n \norm{u_i(x)}^2 \quad \Leftrightarrow \quad \frac{1}{2} \sum_{i=1}^n \int_{\R^{d_i}} \norm{v_i[\rho](x_i)}^2 \d \rho_i(x_i) \,,
\end{align*}
where $V_1(x)$ is analogous to the squared joint metric distance $\Wbar$ between a measure $\rho$ and the steady state $\rho^\infty$, and $V_2$ is analogous to the weighted $L^2$ norm of the velocity fields. 
For a finite dimensional monotone system where the time derivative for each species is given by $-u_i(x)$, 
we can define a lifted infinite-dimensional dynamical system with a velocity field  $ v[\rho](x)=-\left[\int u_1(x)\d\rho_{-1}(x_{-1}),\dots,\int u_n(x) \d \rho_{-n}(x_{-n}) \right]$.
 Importantly, for monotone velocity fields in finite dimensions, the corresponding formulation in infinite dimensions  is also monotone.
 \begin{proposition}[Finite- to Infinite-Dimensional Monotonicity]\label{lem:monotonicity_finite_dimensions}
    Consider the finite dimensional velocity fields $u=[u_1,\dots,u_n]:\R^d \to \R^d$ and define $ v[\rho]=-\left[\int u_1(x)\d\rho_{-1}(x_{-1}),\dots,\int u_n(x) \d \rho_{-n}(x_{-n}) \right]$.  Then $u$ is $\lambda$-monotone according to \cref{def:finite_dim_monotone_dynamics} if and only if the velocity field $v$
        is $\lambda$-monotone in $\Pbar$ according to \cref{def:monotonicity}.
\end{proposition}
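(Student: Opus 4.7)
The plan is to handle both directions by exploiting the product structure of the joint optimal coupling on $\Pbar$. For the forward direction, fix $\rhozero,\rhoone \in \Pbar$ and the optimal plan $\gamma = \gamma_1 \otimes \cdots \otimes \gamma_n$ with $\gamma_i \in \Gamma_i^*(\rhozero_i,\rhoone_i)$. Under $\gamma$, the pairs $(x_j,y_j)$ are independent across $j$, so $x_{-i}$ is independent of $(x_i,y_i)$ with marginal $\rhozero_{-i}$, and $y_{-i}$ is independent of $(x_i,y_i)$ with marginal $\rhoone_{-i}$. Substituting $v_i[\rho](x_i)=-\int u_i(x_{-i},x_i)\,\d\rho_{-i}(x_{-i})$ into the dissipation integral and applying Fubini, the inner integrations against $\rhozero_{-i}$ and $\rhoone_{-i}$ can be absorbed into $\gamma$ via the $x_{-i}$- and $y_{-i}$-marginals, yielding the identity
\begin{align*}
    -\int \<x-y,\, v[\rhozero](x)-v[\rhoone](y)>\,\d\gamma(x,y) = \int \<x-y,\, u(x) - u(y)>\,\d\gamma(x,y).
\end{align*}
Pointwise $\lambda$-monotonicity of $u$ integrated against $\gamma$ then yields the lower bound $\lambda\int \|x-y\|^2\,\d\gamma(x,y) = \lambda\,\Wbar(\rhozero,\rhoone)^2$, which is exactly \eqref{eq:dissipation-inequality}.

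For the reverse direction, I would test the Wasserstein monotonicity condition on Dirac measures. Given arbitrary $x,x' \in \R^d$, set $\rhozero_i = \delta_{x_i}$ and $\rhoone_i = \delta_{x'_i}$ for each $i$; these lie in $\Pbar$, with unique optimal plan $\gamma = \delta_{(x,x')}$. Then $v_i[\rhozero](x_i) = -u_i(x)$, $v_i[\rhoone](x'_i) = -u_i(x')$, and $\Wbar(\rhozero,\rhoone)^2 = \|x-x'\|^2$, so \eqref{eq:dissipation-inequality} collapses exactly to the first-order pointwise inequality $\<x-x',\, u(x)-u(x')> \ge \lambda\|x-x'\|^2$ of \cref{def:finite_dim_monotone_dynamics}.

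The main obstacle, though not deep, is the Fubini bookkeeping in the forward direction: expanding the $v$ terms produces integrals against the product measures $\rhozero_{-i}\otimes\gamma_i$ and $\rhoone_{-i}\otimes\gamma_i$ on mixed subspaces, and one must recognize that these coincide with integrals against $\gamma$ itself, since the $x_{-i}$- and $y_{-i}$-marginals of $\gamma$ are precisely $\rhozero_{-i}$ and $\rhoone_{-i}$ respectively, both independent of $(x_i,y_i)$ under the product structure of $\gamma$. Once that identification is made, both implications follow by direct substitution into the pointwise definition.
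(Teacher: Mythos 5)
Your proposal is correct and follows essentially the same route as the paper's proof: the forward direction uses the product structure of the optimal coupling and the marginal/Fubini identification to rewrite the dissipation integral as $\int\langle x-y,\,u(x)-u(y)\rangle\,\d\gamma(x,y)$ before applying pointwise monotonicity of $u$, and the reverse direction tests on Dirac measures. No gaps; your write-up is in fact more explicit about the independence structure than the paper's own two-line computation.
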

\begin{proof}
   Assume $u$ is $\lambda$-monotone according to \cref{def:finite_dim_monotone_dynamics}. Then the monotonicity condition for $v[\rho]$ to check is 
    \begin{align*}
        \sum_{i=1}^n \int \<x_i-y_i,\int \nabla_{x_i} u_i(x) \d \rhozero_{-i}(x_{-i})-\int \nabla_{y_i} u_i(y) \d \rhoone_{-i}(y_{-i})>\d \gamma_i(x_i,y_i) \ge \lambda \Wbar(\rhozero,\rhoone)\,.
    \end{align*}
    Since the marginals of $\gamma_i$ are $\rhozero_i$ and $\rhoone_i$, we can rewrite the integral and apply the monotonicity condition for $u$
    \begin{align*}
        &\sum_{i=1}^n \int \<x_i-y_i,\int \nabla_{x_i} u_i(x) \d \rhozero_{-i}(x_{-i})-\int \nabla_{y_i} u_i(y) \d \rhoone_{-i}(y_{-i})>\d \gamma_i(x_i,y_i) \\
        &=  \int \sum_{i=1}^n\<x_i-y_i, \nabla_{x_i} u_i(x) - \nabla_{y_i} u_i(y) >\d \gamma(x,y) \ge \lambda \int \norm{x-y}\d \gamma(x,y)\,,
    \end{align*}
    which completes the proof of finite-dimensional monotonicity implying infinite dimensional monotonicity. To show the other direction, selecting Dirac deltas for each species gives the desired inequality.
\end{proof}

In contrast with the finite-dimensional setting, it is possible for monotonicity to fail for measures that are a single Dirac, while it holds for non-atomic measures; this corresponds to searching over mixed-Nash equilibria rather than just pure Nash equilibria. This occurs, for example, when $v_i[\rho](x_i)$ includes terms like $\log \rho(x_i)$ corresponding to linear diffusion. Our setting includes the finite dimensional setting as a special case (\cref{lem:monotonicity_finite_dimensions}), and the notion of monotonicity introduced in \cref{def:monotonicity} is more general.

\subsection{Contributions}
In this work, we define a notion of monotonicity for a system of coupled flows in Wasserstein space which results in convergence to a steady state. We then consider the setting of coupled gradient flows, and provide examples of systems that satisfy this monotonicity condition, both analytically for general functional classes and numerically for specific applications.
\paragraph{Monotone and Coupled Gradient Flows}
In the monotone flow setting, we prove convergence in a weighted $L^2$-norm of the velocity fields, convergence of the second moments to a ball where they remain for all time, and convergence of solutions to the monotone flow in a joint Wasserstein-2 metric to a unique steady state. In the coupled gradient flow setting, we show that the steady state is a Nash equilibrium of the game specified by the cost functions encoded by the dynamics. We show that our infinite-dimensional generalization of monotonicity encapsulates the Euclidean definition, and prove that monotonicity of the full system implies displacement convexity of each energy functional with respect to its own species.
\paragraph{Applications}
We include examples of monotone coupled gradient flow systems, with terms such as cross-diffusion, nonlocal interaction, and linear and nonlinear diffusion.  We simulate a four-team game in which two service providers compete for users in two strategic populations, which manipulate their data over time. Then we draw a connection to multi-team mean-field games parameterized by a particular class of cost functions whose solution is a monotone coupled gradient flow. Finally, we empirically demonstrate convergence of a degenerately monotone game (a game which is not monotone, but almost monotone), using diffusion to ensure sufficient probability mass over the monotone region to still observe convergence. This suggests the possibility for our results to be extended to perturbations of monotone games.

\subsection{Related Literature}
This work draws from the optimization, game theory and PDE literature, applying convergence criteria and techniques from game theory to systems of flow equations. Contraction techniques for single-species PDEs fit elegantly into the multispecies framework, and the analysis of infinite-dimensional games utilizes classical PDE techniques.

\paragraph{Monotone Variational Inequalities in Optimization}
The notion of (nonlinear) monotone operators in Banach spaces is well-studied; it was developed at least as early as 1965 in \cite{browder_nonlinear_1965}. While the results include infinite-dimensional Banach spaces, it does not apply to the space of probability measures, as the Wasserstein-2 metric is not induced by a norm. Therefore, one can view our notion of monotonicity as an extension of classical monotonicity results to a metric space. For an overview of related results, see \cite[Ch 3]{kinderlehrer_introduction_2000}, \cite{cottle_variational_1981}. Other results detail algorithms for finding critical points, which are guaranteed to exist under monotonicity. The extragradient method \cite{korpelevich_extragradient_1976} iterates by computing the gradient at a nearby point in the direction of the current gradient, and taking a step using the neighboring gradient. The results in \cite{bruck_weak_1977} show that monotonicity implies weak convergence of a weighted history of a gradient descent trajectory, and the formulation of monotonicity in \cite{chen_convergence_1997} hints at our formulation, by relating a monotone mapping on $\R^n$ to a set-valued mapping defined via the tangent space, which the authors use to derive a numerical method for finding solutions to a variational inequality. All of these works assume a Banach space structure; our work extends this to a non-Banach-space setting.

\paragraph{Game Theory} 
In the context of game theory, monotonicity emerges in \textit{diagonally strictly concave games} \cite{rosen_existence_1965}, as a condition under which existence of a unique Nash equilibrium can be proven for an $n$-player game. More recent results focus on learning algorithms which minimize functions  under the assumption of monotone cost structures \cite{tatarenko_learning_2019}, or in the presence of strategic data \cite{narang_learning_2022,narang_multiplayer_2023}.
From the dynamical systems perspective, monotonicity is closely tied to the notion of contractive dynamics \cite{bullo_contraction_2024}; when the dynamics evolve a vector in Euclidean space, a $\lambda$-monotone system 
is said to be $\lambda$-contractive.

The extension of finite-dimensional games to infinite-dimensional games has recently been explored in related work. One such work provides a gradient ascent-descent scheme for solving a minimax problem over measures \cite{lu_two-scale_2023}, while  \cite{wang_exponentially_2023} presents a particle method for computing mixed Nash equilibria for two-player zero-sum games. A similar problem was formulated as an adversarially-robust optimization problem in \cite{garcia_trillos_adversarial_2024}, in which the authors also analyze a Wasserstein gradient ascent-descent particle method. Convergence to the mean-field limit of a particular zero-sum particle system was shown in \cite{lu_convergence_2025}. Convergence in time in a zero-sum, infinite-dimensional setting was proven in \cite{conger_coupled_2024}, and it was shown in \cite{conger_strategic_2023} that in a timescale-separated setting, the minimax problem over measures is a gradient flow, providing convergence to the steady state under suitable conditions. Our framework generalizes these settings to $n$ players and classes of cost functions beyond zero-sum, i.e. beyond the setting in which one player minimizes a cost that the other player maximizes. Many real-world settings have a non-zero-sum structure, where results such as Danskin's theorem are no longer available and timescale separation can change the steady state.

Some recent work \cite{graber_monotonicity_2023} gives a few monotonicity conditions for a single-species mean field game, including the Lasry–Lions monotonicity condition; while our condition is similar, we consider a dynamical system rather than the optimization problem of a mean field game. In one of our examples, we investigate the application of our monotonicity condition to a particular multi-population game with functionals of the form in \cite{zhang_mean-field_2023}. Many results consider multi-population games, such as \cite{de_pascale_variational_2024,cirant_multi-population_2015}, examining the existence of equilibria or as a method for clustering \cite{aquilanti_mean_2021}. In our example, we analyze the equilibrium of the mean field game, which is a set of velocities, and show how the resulting dynamics have monotone structure.

\paragraph{Coupled PDEs} 
The literature on multispecies interactions via convolution kernels can be viewed through a game-theoretic lens. These models are common in biology for instance for modeling dynamics of chemotaxis among other phenomena; see references in \cite{conger_strategic_2023}. The majority of the PDE literature in this area concerns proofs for existence of solutions; for example, \cite{francesco_measure_2013} provides a proof for existence, with uniqueness under additional assumptions, for solutions of two species with self interaction and cross-interaction; the kernel can be mildly singular. Under stronger regularity assumptions and for a large class of kernels, we give conditions for convergence to a steady state.
Another work \cite{carrillo_zoology_2018} characterizes the stationary states of a number of interaction systems. The results in \cite{carrillo_measure_2020} as well as references therein are very relevant to our setting due to their characterization of the equilibria of a two-species gradient flow system with nonlocal interaction terms, in addition to proving existence and uniqueness of solutions. While our condition for convergence to the steady state is generally more restrictive than settings considered in existing literature on existence of solutions, we offer a starting point for a new set of results on convergence conditions for these dynamics. For example, quadratic and attractive-repulsive kernels satisfy our notion of monotonicity. The recent work \cite{du_particle_2024} poses a mean field variational inference problem as an optimization over $n$ measures. This can be equivalently stated as a cooperative $n$-player game, in which all players aim to minimize the same energy functional. Our setting generalizes this setting to non-cooperative energies. In fact, the $n$-player dynamics in this work can be considered within the framework of vector-valued optimal transport that has been proposed very recently in \cite{craig_vector_2025}. There, the authors formulate a multispecies optimal transport problem, where mass moves both within a species and between species. Analyzing dynamics which are a gradient flow with respect to the metric proposed in \cite{craig_vector_2025} is an interesting extension of this work; see \cite[Section 1.4]{craig_vector_2025} for details.
The notion of monotonicity (or accretivity, or dissipativity) for operators on Hilbert spaces used in \cite{brezis_hilleyosida_2011}, which we extend here to Wasserstein spaces, was used as a criterion for existence of solutions for measure evolution equations \cite{cavagnari_dissipative_2023}, in which the authors consider multi-valued vector fields; see also references therein.

\subsection{Structure}
We summarize relevant notation and definitions in \cref{sec:preliminaries}. Next, we present the existence of and convergence to the steady state results and proofs in \cref{sec:main_results}. \cref{sec:convexity} details properties of monotonicity in the coupled gradient flow setting. \cref{sec:applications} presents applications in machine learning and game theory, illustrating how our theoretical results can be applied. 
In \cref{sec:appendix}
we provide  additional monotonicity results, such the equivalence between first order and second order monotonicity conditions (\cref{sec:2nd-order-monotonicity}), and a comparison of finite dimensional and infinite dimensional monotonicity as well as comparison between convexity (single species) and monotonicity (multi-species)
(\cref{subsec:monotonicity_convexity}). Next,
\cref{sec:interaction_kernel} includes detailed calculations for interaction kernels supporting the results in \cref{subsec:monotonicity_cross_interaction}, and in \cref{sec:optimal_admissible_plans}, we prove \cref{thm:admissible_couplings}, which shows that optimal plans can be replaced by any admissible plan for the monotonicity condition in $\Pbar$.

\section{Preliminaries}\label{sec:preliminaries}
We consider a dynamical system with $n$ species. The set of measures with finite second moment is denoted by $\P_2$, and each species $\rho_i$ lives in $\P_2(\R^{d_i})$. 
Let $x=[x_1,\dots,x_n]\in\R^d$ where $d=\sum_{i=1}^n d_i$, $\rho\coloneqq[\rho_1,\dots,\rho_n]$ be the vector of all measures, and $\rhoni$ denotes the vector of all non-$i^{th}$ species, $\rhoni\coloneqq[\rho_1, \dots,\rho_{i-1},\rho_{i+1},\dots,\rho_n]$. Similarly, for a vector-valued function $\varphi:\R^d \to \R^n$, $\nv_{-i}=[\nabla_{x_1} \varphi_1, \dots,\nabla_{x_{i-1}} \varphi_{i-1},\nabla_{x_{i+1}} \varphi_{i+1},\dots,\nabla_{x_n} \varphi_{n}] $ and $x_{-i}=[x_1,\dots,x_{i-1},x_{i+1},\dots,x_n]$.  The notation $\int \norm{x}^2\d \rho(x)$ means
\begin{align*}
    \int \norm{x}^2 \d \rho(x) = \sum_{i=1}^n \int \norm{x_i}^2 \d \rho_i(x_i)\,.
\end{align*}
 Let $C_c^\infty$ denote the set of infinitely-differentiable functions with compact support, $C^k$ the set of $k$-times continuously differentiable functions, and $\P^{ac}(\R^{d_i})$ the set of probability measures on $\R^{d_i}$ that are absolutely continuous with respect to the Lebesgue measure. The Lebesgue measure on $\R^d$ is denoted by $\L^{d}$. The set of functions which are square integrable with respect to $\rho_i$ are denoted by $L^2(\rho_i)$. The narrow topology is defined as convergence in duality with continuous bounded functions and the weak-* topology is defined in duality with continuous functions vanishing at infinity. We will use the following related notion of convergence, which we refer to as \emph{weak topology}.
\begin{definition}[Weak Convergence]\label{def:weakcv}
    A sequence of measures $(\rho_n)$ converges in the \emph{weak topology}, denoted by $\rho_n \rightharpoonup \rho$, when $(\rho_n)$ converges narrowly, that is, in duality with all continuous bounded functions, and there exists a uniform second moment bound for $(\rho_n)$.
\end{definition}
 Let $\delta_{\rho_i}$ denote the first variation with respect to $\rho_i$ and $\nabla_{x_i}$ the gradient with respect to $x_i$. For matrices $A,B$ of the same dimension, the operator $A \succ B$ denotes that $A-B$ is positive-definite. Let $\Id_d$ denote the  identity matrix of dimension $d\times d$ and $\id$ the identity operator.
Throughout this paper, we use the following notion for interpolation along transport plans and geodesics.
\begin{definition}[Displacement Interpolants]\label{def:disp_interpolant} 
    Define the displacement interpolant (or geodesic) between $\rhozero$ and $\rhoone$ as $\rhos_i = I^{(s)}_{\sharp} \gamma_i$, where $I^{(s)}(x,y)=(1-s)x + s y$, for all $s\in[0,1]$ and $\gamma\in\Gamma^*(\rhozero,\rhoone)$.
    If a transport map exists from $\rhozero_i\in\P_2$ to $\rhoone_i\in\P_2$ for all $i$, then there exists convex functions $\varphi_i:\R^{d_i}\to\R$ such that $\rhoone_i = [\nabla \varphi_i ]_\sharp \rhozero_i$. Then $\rhos$ can also be expressed as
$\rhos_i = [(1-s)\id+s\nabla \varphi_i]_{\sharp}\rhozero_i$. We define the operator $\nv(x) \coloneqq [\nabla \varphi_1(x_1), \dots, \nabla \varphi_n(x_n)]$ as the vector of the convex functions which define the parameterization. 
\end{definition}
Any displacement interpolant $\rhos$ satisfies the following geodesic conservation of mass and momentum equations:
\begin{align}\label{eq:geodesic_eqns}
    \partial_{s} \rhos_i + \divs{x_i}{\rhos_i \ws_i} = 0 \,, \qquad \partial_{s}(\rhos_i \ws_i) + \divs{x_i}{\rhos_i \ws_i \otimes \ws_i} = 0\,,
\end{align}
and if an invertible pushforward map exists between $\rhozero$ and $\rhoone$, then $w^{(0)}(x)= \nv(x)-x$ and $w^{(1)}(x) = x-(\overline{\nabla \varphi})^{-1}(x)$, where $(\overline{\nabla \varphi})^{-1}(x)$ is defined for a.e. $x\in\supp \rhoone$.

A number of convexity results follow from the monotonicity definition (\cref{def:monotonicity}). The notion of convexity we use is \textit{(uniform) displacement convexity}, which can be viewed as convexity of a functional along $\Wass_2$ geodesics between any two measures in the domain of the functional.
\begin{definition}[Displacement Convexity]\label{def:displacement_convexity}
    A functional $G:\P_2(\R^{d_i})\to\R$ is \emph{displacement convex} if for all $\rhozero_i,\rhoone_i$ that are atomless we have
    \begin{align*}
        G(\rhos_i) \leq (1-s)G(\rhozero_i) + s G(\rhoone_i)\,,
    \end{align*}
    where $\rhos_i$ is given in \cref{def:disp_interpolant}. 
    Further, $G:\P_2(\R^{d_i})\to\R$ is \emph{uniformly displacement convex with constant $\lambda>0$} if 
    \begin{align*}
        G(\rhos_i) \leq (1-s)G(\rhozero_s) + s G(\rhoone_s)-s(1-s) \frac{\lambda}{2} \Wass_2(\rhozero_i,\rhoone_i)^2\,.
    \end{align*}
\end{definition}
Uniform convexity is analogous to strong convexity for Euclidean functions, with the inequality using the Wasserstein-2 distance  instead of the two-norm. 
Equivalent first-order conditions for uniform displacement convexity with constant $\lambda$ are, where $\gamma\in\Gamma^*(\rhozero,\rhoone)$,
\begin{subequations}\label{eq:first_order_convexity}
\begin{align}
    \int \<\nabla \delta_\rho G[\rhozero](x), y - x> \d \gamma(x,y) \le G(\rhoone) - G(\rhozero) -&\frac{\lambda}{2} \Wass_2(\rhozero,\rhoone)^2\,, 
    \\
    \text{ or } \qquad \int \<x-y,\nabla \delta_\rho G[\rhozero](x)-\nabla \delta_\rho G[\rhoone]( y)> \d \gamma(x,y) \ge& \lambda \Wass_2(\rhozero,\rhoone)^2\,.
\end{align}
\end{subequations}
For details regarding these convexity inequalities, see \cite[Proposition 16.2]{Villani07}. 
A sufficient second-order condition for uniform displacement convexity with constant $\lambda$ is
\begin{align*}
    \ddss G(\rhos_i) \ge \lambda \Wass_2(\rhozero_i,\rhoone_i)^2\,;
\end{align*}
see \cite{mccann_convexity_1997} for displacement convexity details and \cref{tab:convexity_monotonicity_comparison_first_order,tab:convexity_monotonicity_comparison_second_order} for comparison with monotonicity conditions.

For this work, we are not concerned with existence results for solutions to \eqref{eq:monotone_flow_dynamics} and assume enough regularity of the velocity fields so that solutions $\rho(t)\in\Pbar$ exist.
To analyze the long-time behavior of the system, we define the steady-state of the dynamics \eqref{eq:monotone_flow_dynamics}.
\begin{definition}[Steady states for \eqref{eq:monotone_flow_dynamics}]\label{def:steady_state_v}
    Given $\rho^\infty\in \Pbar$, then $\rho^\infty$ is a steady state for the system \eqref{eq:monotone_flow_dynamics} if it satisfies
\begin{align}
   \int \nabla \phi \cdot v_i[\rho^\infty](x_i) \d \rho_i^\infty(x_i) = 0 \quad \forall\, \phi\in C_c^\infty(\R^{d_i})\,,\  \forall\, i\in [1,\dots,n] \label{eq:steady_state_v}\,.
\end{align}
\end{definition}

In the coupled gradient flow setting, each player $i$ evolves in the direction of steepest descent of $F_i$ with respect to its own measure, in the Wasserstein-2 metric.
Importantly, the dynamics \eqref{eq:coupled_gradient_flow_dynamics} are \textit{not} necessarily a joint gradient flow; rather, each player is doing gradient descent along its own energy functional that depends on the other agents. 
We will prove conditions under which dynamics with corresponding functionals that satisfy monotonicity will converge to a unique steady state, which is also the unique Nash equilibrium of the game $F=(F_i)$. Using the standard definition of Nash equilibrium, we present the definition of a Nash equilibrium in terms of the action $\rho_i$ and objective $F_i$ of each player.
\begin{definition}[Nash Equilibrium]\label{def:Nash_eq}
    A set of measures $\rho^*\in\A$ is a Nash equilibrium over $\A$ if,
    \begin{align}\label{eq:mixed_nash_equilibrium}
        F_i[\rho_i^*,\rhoni^*] \le F_i[\rho_i,\rhoni^*] \quad \forall \ \rho_i\in \A_i\,, \quad \forall\, \iin \,.
    \end{align}
\end{definition}
A set of actions $\rho^*$ is a Nash equilibrium when for each player $i$, all other players' actions are fixed and the $i^{th}$ player has no incentive to deviate from its current strategy. Our notion of monotonicity can been see as a generalization of monotonicity for mixed Nash equilibria.

\section{Main Results}\label{sec:main_results}
Our key result is convergence of the coupled flow system \eqref{eq:monotone_flow_dynamics} to a unique steady state. We prove that monotonicity implies that the $L^2$ norm of the velocity fields converge to zero, and the second moments converge to a ball where they remain for all time. 
We prove a contraction result in $\Wbar$, which results in the existence of a unique steady state. We can then show convergence from an initial condition to that steady state. In the setting where the system is a coupled gradient flow, we also show that the steady state corresponds to a Nash equilibrium.

For the remainder of the manuscript, any $\lambda$-monotone vector field $v$ will be considered with a domain that is geodesically convex in $\Pbar$. Below, we summarize additional assumptions used for our main results to hold.
\begin{assumption}[Absolutely Continuous Curves]\label{assump:loc_abs_cont_curves}
    The solution $\rho(t)$ to \eqref{eq:monotone_flow_dynamics} is absolutely continuous, that is, there exists some $g\in L^1([0,\infty))$ such that
    \begin{align*}
        \Wbar(\rho(t),\rho(s)) \le \int_t^s g(\tau)  \d \tau\,, \quad \forall\ 0 \le t \le s < \infty\,.
    \end{align*}
\end{assumption}
\begin{assumption}[Lower Semicontinuity]\label{assump:F_lsc}
    For all $\iin$ and for any fixed $\rho\in \A$, the functional $F_i[\cdot,\rhoni]$ is lower-semicontinuous with respect to the weak topology over the domain $\A_i$.
\end{assumption}

\cref{assump:loc_abs_cont_curves} is needed to compute the time derivative of the $\Wbar$ metric.
\cref{assump:F_lsc} is used to prove the existence of minimizers for each $F_i[\cdot,\rhoni]$ in the coupled gradient flow setting, which is then applied together with monotonicity to show that the steady state is the unique Nash equilibrium.

The next three assumptions are not used in the main theorem; they are relevant for proving equivalence of first- and second-order notions of monotonicity, which is then used to show convergence of velocity fields. More precisely, \cref{assump:regularity_v} allows us to compute the time derivative of the $L^2$ norm of the velocities, and \cref{assump:continuity_v,assump:tangent_space} are used to construct a local, second-order notion of monotonicity (\cref{lem:second_order_monotonicity_v}). 
These assumptions are not chosen to be optimal, and we expect relaxations  to be possible.

\begin{assumption}[Differentiability of $v$]\label{assump:regularity_v}
    The terms $J_1[\rho]=\R^d\to\R^{d\times d}$ and $J_2[\rho]:\R^d \times \R^d \to \R^{d\times d}$ given by
    \begin{align*}
    \left(J_1[\rho](x) \right)_{ij} &\coloneqq 
    \begin{cases}
        \nabla_{x_i} v_i[\rho](x_i) & \text{if } i=j \\
        0 & \text{if } i \ne j
    \end{cases} \quad \in \R^{d_i\times d_i} \,, \\
     ( J_2[\rho](x,\hat x))_{ij} &:= \nabla_{\hat x_j} \delta_{\rho_j} v_i[\rho](x_i,\hat x_j) \qquad \in \R^{d_i \times d_j} \,,
\end{align*}
    are well-defined for $\rho$-a.e. $x$ for all $\rho\in\A$. 
\end{assumption}
\begin{assumption}[Continuity of $v$]\label{assump:continuity_v}
The functions $J_1,J_2$ defined in \cref{assump:regularity_v}
satisfy the following continuity property: If $\rho_\eps\to \bar\rho$ in $\Wbar$ as $\eps\to 0$ and $\rho_\eps, \bar\rho\in\A$, then for any $w\in L^2(\bar\rho)$,
\begin{align*}
     &\iint \<w(x),J_2[\rho_\eps](x+\eps w(x),\hat x+\eps w(\hat x))\cdot w(\hat x) >\d \bar\rho(x) \d \bar\rho(\hat x)
     +\int\<w(x),J_1[\rho_\eps](x+\eps w(x)) \cdot w(x) > \d \bar\rho(x) \\
    &\quad\to \iint \<w(x),J_2[\bar\rho](x,\hat x)\cdot w(\hat x) >\d \bar\rho(x) \d \bar\rho(\hat x)  +\int\<w(x),J_1[\bar\rho](x) \cdot w(x) > \d \bar\rho(x) \text{ as } \eps\to 0\,.
\end{align*}
\end{assumption}
\begin{assumption}[Admissible Velocity Fields]\label{assump:tangent_space}
    The velocity fields satisfy $v_i[\rho]\in Y_i(\rho_i,\A_i)$ for all $\rho\in\A$ and all $\iin$, where
        \begin{align*}
       &Y_i(\rho_i,\A_i)= \overline{\{c_i(r_i-\id)\in L^2(\rho_i) \ : \ (\id \times r_i)_\sharp \rho_i \in \Gamma^*(\rho_i,(r_i)_\sharp \rho_i)\,,\ (r_i)_\sharp \rho_i\in \A_i  \} }^{L^2(\rho_i)}
    \end{align*}
   where $ r_i:\R^{d_i}\to\R^{d_i}$ and $c_i>0$ is a constant.
\end{assumption}
 We can view the set $Y_i(\rho_i,\A_i)$ as a subset of the tangent space at $\rho_i$; for the setting where $\A_i = \P_2(\R^{d_i})$, it was shown in \cite[Theorem 8.5.1]{ags} that $Y_i$ is exactly the Wasserstein-2 tangent space at $\rho_i$. For example, if $A_i=L^1(\R^{d_i})$, then $r_i=0$ is not in $Y_i$ since it pushes $\rho_i\in L^1(\R^{d_i})$ to a Dirac, which is not in $ L^1(\R^{d_i})$; using the entire Wasserstein-2 tangent space may push densities outside of $A_i$.

\begin{remark}\label{rmk:assumpt-equiv}
Absolute continuity of solution curves (\cref{assump:loc_abs_cont_curves}) can be obtained as a consequence of related properties.
\begin{enumerate}
    \item[(i)]\label{rmk:assumpt-equiv-one} For instance, a sufficient condition for \cref{assump:loc_abs_cont_curves} to hold is $v[\rho(t)](\cdot)\in L^2(\rho(t))$ for all $t\ge 0$ and $\int_0^\infty \int\|v[\rho(t)](x)\|^2\d \rho(t)\d t<\infty$ \cite[Theorem 5.14]{santambrogio_OTAM}.
    \item[(ii)]\label{rmk:assumpt-equiv-two} If $\rho(t)\in\A$ for all $t\ge 0$, and \cref{assump:regularity_v,assump:continuity_v,assump:tangent_space} together with 
    \begin{equation*}
        \frac{1}{2}\sum_{i=1}^n \int \norm{v_i[\rho(0)](x_i)}^2  \d \rho_i(0,x_i)<\infty
    \end{equation*}
    hold, it follows from $\lambda$-monotonicity for $\lambda> 0$ (\cref{lem:convergence_D_v}) that the sufficient condition in (i) holds, and so \cref{assump:loc_abs_cont_curves} is satisfied.
\end{enumerate}
    \end{remark}
For a given velocity field $v:\A\to\R^d$, we consider the set
\begin{align*}
    S = \left\{ \rho \in \Pbar \ : \ \int_\Omega \left\| v[\rho](x)\right\|^2 \d \rho(x) < \infty \quad \forall \ \Omega\subset \R^d\,,\ \Omega\ \text{compact} \right\}\,.
\end{align*}
Note that for any steady state $\rho^\infty$ of the flow \eqref{eq:monotone_flow_dynamics} satisfying \cref{def:steady_state_v}, we have $v_i[\rho^\infty](x_i)=0$ for $\rho_i^\infty$-a.e. $x_i$ and therefore $\rho^\infty\in S$.

\begin{theorem}[Main Result]\label{thm:cv_W2}
   \sloppy Let \cref{assump:loc_abs_cont_curves} hold. Suppose \eqref{eq:monotone_flow_dynamics} is $\lambda$-monotone in $\A\subseteq\Pbar$ with $\lambda>0$, and $\rho(t)$ evolves according to \eqref{eq:monotone_flow_dynamics}. If $\rho(t)\in\A$ for all $t\ge 0$, then
    \begin{enumerate}
        \item[(a)] there exists a unique steady state $\rho^\infty\in S$ of \eqref{eq:monotone_flow_dynamics} and if $\A$ is compact in $\Wbar$, then $\rho^\infty\in S\cap\A$;
        \item[(b)] if $\rho^\infty\in S\cap\A$, the solution to \eqref{eq:monotone_flow_dynamics} $\rho(t)$ converges exponentially fast to $\rho^\infty$ in $\Wbar$ with rate $\lambda$,
        \begin{align*}
            \Wbar(\rho(t),\rho^\infty) \le e^{-\lambda t } \Wbar(\rho(0),\rho^\infty)\,.
        \end{align*}
        \item[(c)] If, additionally,  $v_i=-\nabla_{x_i}\delta_{\rho_i} F_i[\rho]$, \cref{assump:F_lsc} holds, and $\rho^\infty \in \A \cap S$, then $\rho^\infty$ is the unique Nash equilibrium of $F$ in $\A$.
    \end{enumerate}
\end{theorem}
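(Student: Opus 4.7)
The plan is to prove (b) first, since the contraction estimate is the engine that drives everything else, and then leverage it to obtain (a), and finally combine monotonicity with the lower-semicontinuity Assumption 3.2 to deduce (c). For part (b), under \cref{assump:loc_abs_cont_curves}, I would select at each time $t$ an optimal plan $\gamma_t \in \Gamma^*(\rho(t),\rho^\infty)$ and apply the standard differentiability formula for the squared Wasserstein distance along an absolutely continuous curve driven by the velocity field $v[\rho(t)]$, together with the fact that $\rho^\infty$ is stationary. Component-wise this gives
\begin{equation*}
    \tfrac{1}{2}\ddt \Wbar(\rho(t),\rho^\infty)^2 \;=\; -\int \< x-y,\, v[\rho(t)](x)-v[\rho^\infty](y)\> \d \gamma_t(x,y),
\end{equation*}
where the term involving $v[\rho^\infty](y)$ is zero because $\rho^\infty \in S$ means $v[\rho^\infty]=0$ $\rho^\infty$-a.e., and $\gamma_t$ has second marginal $\rho^\infty$. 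Applying $\lambda$-monotonicity \eqref{eq:dissipation-inequality} to $\gamma_t$ bounds the right-hand side by $-\lambda \Wbar(\rho(t),\rho^\infty)^2$, and Grönwall's inequality delivers the stated exponential contraction with rate $\lambda$.

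For part (a), I would exploit the same contraction estimate applied to two different solutions of \eqref{eq:monotone_flow_dynamics} rather than to a solution and a steady state. Given any initial datum $\rho(0)\in\A$ and any $h>0$, the time shift $\rho(\cdot+h)$ is another solution of \eqref{eq:monotone_flow_dynamics} staying in $\A$, and the same monotonicity argument yields
\begin{equation*}
    \Wbar(\rho(t+h),\rho(t)) \;\le\; e^{-\lambda t}\,\Wbar(\rho(h),\rho(0)).
\end{equation*}
Since \cref{assump:loc_abs_cont_curves} ensures $\Wbar(\rho(h),\rho(0))$ is finite for each $h>0$, letting $t\to\infty$ shows $(\rho(t))$ is Cauchy in the complete metric space $(\Pbar,\Wbar)$, hence converges to some $\rho^\infty\in\Pbar$. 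Passing to the limit in the weak formulation of \eqref{eq:monotone_flow_dynamics} identifies $\rho^\infty$ as a steady state, and the uniform second-moment bound available along the trajectory together with the contraction argument places $\rho^\infty$ in $S$; if $\A$ is compact in $\Wbar$ then closedness gives $\rho^\infty\in\A$. For uniqueness, I would apply the monotonicity inequality directly to two candidate steady states $\rho^\infty,\mu^\infty\in S$: both velocity fields vanish on the respective marginals of any $\gamma\in\Gamma^*(\rho^\infty,\mu^\infty)$, so the left-hand side of \eqref{eq:dissipation-inequality} is zero, forcing $\Wbar(\rho^\infty,\mu^\infty)=0$ since $\lambda>0$.

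For part (c), the coupled gradient flow structure $v_i=-\nabla_{x_i}\delta_{\rho_i}F_i[\rho]$ makes the steady-state condition \eqref{eq:steady_state_v} the first-order stationarity condition at $\rho_i^\infty$ for the single-species problem $\min_{\rho_i\in\A_i} F_i[\rho_i,\rho_{-i}^\infty]$. The paper's separate result that $\lambda$-monotonicity of $(F_i)$ implies uniform displacement convexity of each $F_i[\cdot,\rho_{-i}^\infty]$ with constant $\lambda>0$ on $\A_i$, together with \cref{assump:F_lsc}, yields by the direct method the existence of a unique minimizer $\rho_i^\star$ of $F_i[\cdot,\rho_{-i}^\infty]$, and by convexity this minimizer is the unique critical point. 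Hence $\rho_i^\infty=\rho_i^\star$, which is precisely \eqref{eq:mixed_nash_equilibrium}; uniqueness of the Nash equilibrium in $\A$ follows by another direct application of \eqref{eq:monotonicity_condition_F} to any two Nash equilibria, both of which satisfy $\nabla_{x_i}\delta_{\rho_i}F_i=0$ on their supports.

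The main obstacle is the rigorous justification of the chain rule for $\ddt \Wbar(\rho(t),\rho^\infty)^2$ under only \cref{assump:loc_abs_cont_curves}; standard references require additional integrability of the velocity field along the curve, which per \cref{rmk:assumpt-equiv}(i) amounts to a finite action bound $\int_0^\infty\!\int\|v[\rho(t)](x)\|^2\d\rho(t)\d t<\infty$. A secondary subtlety is passing to the limit $t\to\infty$ in the continuity equation to identify $\rho^\infty$ as a steady state, which implicitly uses stability of $\rho\mapsto v[\rho]$ in an appropriate weak sense; this is the place where structural hypotheses on $v$ (or the compactness of $\A$) enter most critically.
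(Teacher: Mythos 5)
Your proposal is correct in substance and, for parts (b) and (c), follows essentially the same route as the paper: part (b) is the derivative formula for the squared distance to the stationary point plus the vanishing of $v[\rho^\infty]$ on $\supp\rho^\infty$ and Gr\"onwall, and part (c) is exactly the paper's chain (steady state $\Rightarrow$ critical point, monotonicity $\Rightarrow$ at most one critical point, displacement convexity from monotonicity plus \cref{assump:F_lsc} $\Rightarrow$ the critical point is the minimizer of each $F_i[\cdot,\rhoni^\infty]$, hence the unique Nash equilibrium). One small slip in (b): your displayed identity carries an extraneous leading minus sign; the correct formula is $\tfrac12\ddt\Wbar(\rho(t),\rho^\infty)^2=\int\langle x-y, v[\rho(t)](x)\rangle\,\d\gamma_t$, so that adding the zero term and applying \eqref{eq:dissipation-inequality} bounds it above by $-\lambda\Wbar(\rho(t),\rho^\infty)^2$; as written, monotonicity would bound your right-hand side \emph{below} by $\lambda\Wbar^2$ and give growth rather than decay.

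For part (a) you take a genuinely different route. The paper establishes the two-solution contraction estimate exactly as you do, but then invokes completeness of $(\P_2,\Wass_2)$ and cites \cite[Lemma 7.3]{carrillo_contractive_2007} to obtain existence and uniqueness of the stationary point in one stroke; that lemma exploits the semigroup structure (each $S_t$ is a strict contraction on a complete metric space, so has a unique fixed point, and the fixed points coincide across $t$), which never requires passing to the limit in the PDE. Your route instead shows the trajectory is Cauchy via the time-shift trick and then identifies the limit as a steady state by passing to the limit in the weak formulation. That last step is the weak link you yourself flag: it needs some stability of $\rho\mapsto v[\rho]$ under $\Wbar$-convergence, which is nowhere among the hypotheses of the theorem. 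So while your argument is more self-contained in spirit, as stated it has a gap precisely where the paper's citation does not; if you want to avoid importing continuity of $v$, you should replace the limit-passing step by the fixed-point argument on the flow maps. Your uniqueness argument for steady states (both velocity fields vanish a.e.\ on their marginals, so the monotonicity pairing is zero and $\lambda>0$ forces $\Wbar=0$) is fine, with the caveat that it only applies to candidate steady states lying in $\A$, where the monotonicity inequality is assumed to hold.
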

We postpone the proof of this result to \cref{sec:proof-main-thm}. In the case where $\A$ is not compact, for specific functionals it can still be shown that $\rho^\infty \in \A$. See \cite{conger_coupled_2024} for an example where diffusion is included in the evolution,   and the steady state is shown to be in $\A=L^1(\R^d)$, which is not compact in the weak topology on $\P_2(\R^d)$.

In finite-dimensional game theory, convergence to Nash equilibria can be proven using a Lyapunov function dependent on the velocity, $V_2(x)=\frac{1}{2}\sum_{i=1}^n \norm{u_i(x)}^2$. The analogous infinite-dimensional Lyapunov function offers insight into the $L^2$-norm of the velocity fields. In the single-player gradient flow setting where $v_1[\rho_1]=-\nabla \delta_{\rho_1}F_1[\rho_1]$, the Lyapunov function is closely related to the \textit{energy dissipation}.  It is the energy dissipation that would result from a single, decoupled gradient flow, but in the multispecies setting, each player's velocity field in its dynamics is dependent on the other players' actions. We present a convergence result for this functional and comment on the additional requirements needed to show convergence. We define the Lyapunov function $D:\A \to \R$
\begin{align}
    D(\rho) &= 
        \sum_{i=1}^n D_i(\rho) 
    \,, \quad D_i(\rho) = \frac{1}{2}\int \norm{v_i[\rho](x_i)}^2  \d \rho_i(x_i) \,.\label{eq:Lyap_D_v}
\end{align}
We show that $D(\rho)$ converges to zero exponentially with a rate dependent on the monotonicity parameter.

\begin{proposition}[Convergence of $D$]\label{lem:convergence_D_v}
Let $v$ be $\lambda$-monotone in $\A$, let \cref{assump:regularity_v,assump:continuity_v,assump:tangent_space} hold, and suppose $\rho(t)\in\A$ for $t\ge 0$, with $D(\rho(0))<\infty$. Then $D$ as defined in \eqref{eq:Lyap_D_v} along solutions to \eqref{eq:monotone_flow_dynamics} satisfies
\begin{align}\label{eq:velocity_dissipation}
    D(\rho(t)) \le e^{-2\lambda t} D(\rho(0)) \quad \forall\, t\ge 0\,.
\end{align}
\end{proposition}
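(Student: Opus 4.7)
The plan is to mirror the finite-dimensional argument establishing $\dot V_2 \le -2\lambda V_2$ for $V_2(x)=\tfrac{1}{2}\sum_i\|u_i(x)\|^2$. In finite dimensions the proof differentiates $V_2$ along the flow and invokes the second-order monotonicity $J_u+J_u^\top \succeq 2\lambda I$; here I will differentiate $D$ along \eqref{eq:monotone_flow_dynamics} and invoke the infinite-dimensional second-order form of monotonicity (\cref{lem:second_order_monotonicity_v} in the appendix), which is exactly what \cref{assump:regularity_v,assump:continuity_v,assump:tangent_space} are set up to deliver. Grönwall's lemma then closes the exponential estimate.

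The first step is to compute $\frac{d}{dt} D(\rho(t))$ as the sum of a measure-derivative piece and a velocity-derivative piece. Using the continuity equation $\partial_t \rho_i = -\mathrm{div}_{x_i}(\rho_i v_i[\rho])$ and integrating by parts against the spatial test function $\tfrac{1}{2}\|v_i[\rho]\|^2$ produces the self-diagonal contribution $\int \langle v_i, (J_1[\rho])_{ii}\, v_i\rangle\, d\rho_i$. For the velocity-derivative piece, the chain rule in $\rho$ combined with the continuity equations for each species $j$ gives
\begin{equation*}
\partial_t v_i[\rho(t)](x_i) = \sum_j \int \nabla_{\hat x_j}\delta_{\rho_j} v_i[\rho](x_i,\hat x_j)\cdot v_j[\rho](\hat x_j)\, d\rho_j(\hat x_j) = \sum_j \int (J_2[\rho])_{ij}(x_i,\hat x_j)\, v_j[\rho](\hat x_j)\, d\rho_j(\hat x_j).
\end{equation*}
Summing over $i$ and using the "integral-sum" convention of the paper, the two pieces combine to
\begin{equation*}
\frac{d}{dt} D(\rho(t)) = \iint \langle v[\rho](x), J_2[\rho](x,\hat x)\, v[\rho](\hat x)\rangle\, d\rho(x)\, d\rho(\hat x) + \int \langle v[\rho](x), J_1[\rho](x)\, v[\rho](x)\rangle\, d\rho(x),
\end{equation*}
which is precisely the bilinear form controlled by the second-order monotonicity condition appearing in \cref{assump:continuity_v}.

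Next, \cref{assump:tangent_space} guarantees that $v_i[\rho(t)]\in Y_i(\rho_i(t),\A_i)$, so I can apply the second-order monotonicity inequality (from \cref{lem:second_order_monotonicity_v}) with test direction $w = v[\rho(t)]$, which yields
\begin{equation*}
\iint \langle v, J_2 v\rangle\, d\rho\, d\rho + \int \langle v, J_1 v\rangle\, d\rho \le -\lambda \int \|v[\rho]\|^2\, d\rho = -2\lambda\, D(\rho).
\end{equation*}
Combined with the derivative identity above, this gives the differential inequality $\frac{d}{dt} D(\rho(t)) \le -2\lambda\, D(\rho(t))$, and Grönwall's lemma produces $D(\rho(t)) \le e^{-2\lambda t} D(\rho(0))$. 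The hypothesis $D(\rho(0))<\infty$ ensures the right-hand side is finite to start.

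The main obstacle is the rigorous justification of the derivative computation, in particular propagating the time derivative through the nonlinear dependence of $v[\rho(t)]$ on $\rho(t)$. This is exactly what \cref{assump:continuity_v} is engineered for: applied to the difference quotient $\varepsilon^{-1}(v[\rho(t+\varepsilon)]-v[\rho(t)])$, where $\rho$ is being perturbed locally along the tangent direction $v[\rho(t)]$ by the dynamics, it delivers the required interchange of limits and integrals and produces the $J_1, J_2$ bilinear form from above. The $\Wbar$-absolute continuity of $t\mapsto \rho(t)$ from \cref{assump:loc_abs_cont_curves} is what allows this infinitesimal perturbation picture to be applied pointwise in $t$. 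Once the derivative identity is in hand, the second-order monotonicity inequality and Grönwall are a routine combination.
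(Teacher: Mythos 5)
Your proof is correct and follows essentially the same route as the paper's: differentiate $D$ along the flow to obtain the bilinear form $\int \langle v, J_1 v\rangle \,\d\rho + \iint \langle v, J_2 v\rangle \,\d\rho\,\d\rho$, apply the second-order monotonicity inequality of \cref{lem:second_order_monotonicity_v} with $w=v[\rho]$ (justified by \cref{assump:tangent_space}), and close with Gr\"onwall. The only cosmetic difference is your invocation of \cref{assump:loc_abs_cont_curves} at the end, which is not among the proposition's hypotheses and is not needed (the paper in fact derives that assumption's sufficient condition \emph{from} this proposition in \cref{rmk:assumpt-equiv}).
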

\begin{proof}
    First, computing the derivative of $D_i$ along solutions of \eqref{eq:monotone_flow_dynamics},
    \begin{align*}
    \dot D_i(\rho) &= \sum_{j=1}^n \iint \<v_i[\rho](x_i), \delta_{\rho_j} v_i[\rho](x_i,\bar x_j)> \partial_t \rho_j(\bar x_j) \d \rho_i(x_i) \d \bar x_j 
    + \frac{1}{2} \int \|v_i[\rho](x_i)\|^2 \partial_t\rho_i(x_i)\d x_i\,.
\end{align*}
Substituting the dynamics and using integration by parts we have 
\begin{align*}
    \dot D_i(\rho)& = \sum_{j=1}^n \bigg(\iint \< v_i[\rho](x_i),\nabla_{ \bar x_j} \delta_{ \rho_j} v_i[\rho](x_i,\bar x_j) \cdot v_j[\rho](\bar x_j) >\d \rho_j(\bar x_j) \d \rho_i(x_i)
 \\&\qquad \qquad 
 +  \int \<v_i[\rho](x_i), \nabla_{x_i} v_i[\rho](x_i) \cdot v_i[\rho](x_i) >\d \rho_i(x_i) \bigg) \,.
\end{align*}
Combining the derivatives for all species into vectors and matrices, we have
\begin{align*}
    \dot D(\rho) &=   \int v[\rho](x)^\top J_1 [\rho](x) v[\rho](x) \d \rho(x) +\iint v[\rho](x)^\top J_2[\rho](x,\bar x) v[\rho](\bar x)\d\rho(x) \d\rho(\bar x) \,.
\end{align*}
Since $D(\rho(0))<\infty$ and the second-order derivative terms are well-defined by \cref{assump:regularity_v}, $\dot D(\rho(0))$ is also well-defined. Since $v_i[\rhozero]\in Y_i(\rhozero_i,\A_i)$ thanks to \cref{assump:tangent_space}, applying the monotonicity result \cref{lem:second_order_monotonicity_v} at $t=0$ with $w(x)=v[\rhozero](x)$, we have 
\begin{align*}
    \dot D(\rhozero) &\le -\lambda \int \|v[\rhozero](x)\|^2 \d \rhozero(x) = -2\lambda D(\rhozero)\,,
\end{align*}
and Gr\"onwall's lemma gives an exponential estimate of $D$ with rate $2\lambda$, allowing  the above argument to be applied for any $t\ge0$.
\end{proof}

\begin{remark}
    If $\A = \Pbar$, then we can use any admissible map in the proof of \cref{lem:second_order_monotonicity_v}, due to \cref{thm:admissible_couplings}, and so \cref{lem:convergence_D_v} holds without \cref{assump:continuity_v,assump:tangent_space}.
\end{remark}

\subsection{Second Moments}
Monotonicity is closely tied to properties of the second moment of each species. We detail conditions under which the second moment is uniformly bounded. We use $M(\rho):\Pbar \to \R$ to denote the sum of the second moments of all species, 
\begin{align}\label{def:second_moment}
    M(\rho) = \frac{1}{2}\int \norm{x}^2 \d \rho(x) = \frac{1}{2}\sum_{i=1}^n \int \norm{x_i}^2 \d \rho_i(x_i)\,.
\end{align}
The time derivative of the second moment along solutions to \eqref{eq:monotone_flow_dynamics} is
\begin{align}\label{eq:2nd_moment_derivative_estimate}
    \ddt M(\rho(t))=  \int \<x,v[\rho](x)>\d\rho(t,x) \quad \forall\, t\ge 0\,.
\end{align}
Using monotonicity, we upper-bound the right hand side of \eqref{eq:2nd_moment_derivative_estimate}. The following properties specify settings where the second moment is not blowing up in finite time (\cref{property:finite_2nd_moment}) and where the second moment is uniformly bounded for all time (\cref{property:bdd_2nd_moment}).

\begin{property}[Finite Second Moments]\label{property:finite_2nd_moment}
Given an initial condition $\rho(0)\in\Pbar$, the solution $\rho(t)$ to \eqref{eq:monotone_flow_dynamics} satisfies
\begin{align*}
   M(\rho(t)) <\infty \quad \forall\, t\ge 0\,.
\end{align*}
\end{property}

\begin{property}[Uniformly Bounded Second Moments]\label{property:bdd_2nd_moment}
Given an initial condition $\rho(0)\in\Pbar$, the solution $\rho(t)$ to \eqref{eq:monotone_flow_dynamics} satisfies, for some $c\in\R_+$,
\begin{align*}
     M(\rho(t)) \le c  \quad \forall\, t\ge 0\,,
\end{align*}
\end{property}

While
\cref{property:finite_2nd_moment} is a sufficient condition for applying a contraction theorem in the analysis of $\Wbar$ convergence, it is possible for it to hold while monotonicity fails. In the single-species setting, the solution to $\partial_t \rho_1(t)=\div{\rho_1(t)\nabla \log\rho_1(t)}=\Delta \rho_1(t)$ satisfies \cref{property:finite_2nd_moment} but does not satisfy strict monotonicity or \cref{property:bdd_2nd_moment}.
\cref{property:bdd_2nd_moment} can be shown using monotonicity, with additional assumptions needed depending on the specific choice of $\A$. First, we show that monotonicity in $\Pbar$ results in \cref{property:bdd_2nd_moment} holding under mild conditions. 
Let $\bar \delta_0 \coloneqq [\delta_0,\dots,\delta_0]$ be the Dirac at the origin; in the proof of the following lemma, we will use that the second moment of $\rho$ can be characterized via $M(\rho)=\frac{1}{2}\Wbar(\rho,\bar \delta_0)^2$.
\begin{lemma}[Uniformly Bounded Second Moments in Domains with Diracs]\label{lem:monotonicity_in_Pbar}
  Assume $\bar \delta_0\in\A$. Let \cref{assump:regularity_v,assump:continuity_v,assump:tangent_space} hold, $v:\A \to \R^d$ be $\lambda$-monotone in $\A$ with $\lambda>0$, and $\rho(t)\in\A \ \forall\, t\ge 0$ evolves according to \eqref{eq:monotone_flow_dynamics}. If $\norm{v[\bar \delta_0](0)}<\infty$, $D(\rho(0))<\infty$ and for all $\rho\in\A$, the velocity fields $v_i[\rho]$ satisfy
    $\nabla_{x_i} v_i[\rho]\in L_{loc}^\infty(\R^{d_i};\R^{d_i}\times \R^{d_i})$, then \cref{property:bdd_2nd_moment} holds.
\end{lemma}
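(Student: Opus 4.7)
The plan is to exploit the identity $M(\rho)=\tfrac{1}{2}\Wbar(\rho,\bar\delta_0)^2$ and apply the $\lambda$-monotonicity inequality \eqref{eq:dissipation-inequality} with the Dirac vector $\bar\delta_0\in\A$ as the second base point. Since the only coupling between each $\rho_i$ and $\delta_0$ is $\rho_i\otimes \delta_0$, this coupling is automatically optimal, so monotonicity produces a pointwise-in-the-first-variable estimate that directly controls $\int \<x,v[\rho](x)> \d\rho(x)$, which is precisely $\ddt M(\rho(t))$ by \eqref{eq:2nd_moment_derivative_estimate}. This turns the PDE into a scalar Grönwall inequality for $M(\rho(t))$.

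More concretely, I would first invoke \eqref{eq:2nd_moment_derivative_estimate} and then apply \cref{def:monotonicity} with $\rhozero=\rho(t)$, $\rhoone=\bar\delta_0$, and $\gamma=\rho(t)\otimes\delta_0$, to obtain
\begin{equation*}
    -\int \<x,\, v[\rho(t)](x)-v[\bar\delta_0](0)> \d\rho(t,x) \ge 2\lambda M(\rho(t))\,.
\end{equation*}
Setting $v_0:=v[\bar\delta_0](0)$ (finite by hypothesis) and rearranging gives
\begin{equation*}
    \ddt M(\rho(t)) = \int \<x,\, v[\rho(t)](x)> \d\rho(t,x) \le \int \<x,\, v_0> \d\rho(t,x) - 2\lambda M(\rho(t))\,.
\end{equation*}
A Young-type inequality with parameter $\lambda$ bounds the linear term via $\int \<x,v_0> \d\rho \le \lambda M(\rho) + \tfrac{1}{2\lambda}\|v_0\|^2$, producing the closed scalar inequality $\ddt M(\rho(t)) \le -\lambda M(\rho(t)) + \tfrac{1}{2\lambda}\|v_0\|^2$. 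Grönwall's lemma then yields
\begin{equation*}
    M(\rho(t)) \le M(\rho(0))\,e^{-\lambda t} + \frac{\|v_0\|^2}{2\lambda^2}\bigl(1-e^{-\lambda t}\bigr) \le \max\Bigl\{M(\rho(0)),\, \tfrac{\|v_0\|^2}{2\lambda^2}\Bigr\}\,,
\end{equation*}
establishing \cref{property:bdd_2nd_moment} with $c = \max\{M(\rho(0)),\, \|v_0\|^2/(2\lambda^2)\}$.

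The main technical obstacle lies in rigorously justifying the differentiation of $M$ along solutions of \eqref{eq:monotone_flow_dynamics}, since $x\mapsto \|x\|^2$ is not a bounded or compactly supported test function and \cref{assump:loc_abs_cont_curves} only gives absolute continuity in $\Wbar$. This is where the auxiliary hypotheses are needed: the local boundedness $\nabla_{x_i} v_i[\rho]\in L^\infty_{loc}$ enables a truncation argument with smooth cut-offs of $\|x\|^2$ to write the weak continuity equation, and $D(\rho(0))<\infty$ combined with \cref{lem:convergence_D_v} (whose hypotheses \cref{assump:regularity_v,assump:continuity_v,assump:tangent_space} are in force) supplies a uniform-in-time bound $D(\rho(t))\le D(\rho(0))$. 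Together with $\rho(t)\in\Pbar$, Cauchy--Schwarz yields $|\int\<x,v[\rho(t)](x)>\d\rho(t,x)|\le \sqrt{2M(\rho(t))}\sqrt{2D(\rho(t))}<\infty$, which is exactly the integrability needed to pass the truncation limit and certify \eqref{eq:2nd_moment_derivative_estimate} at every $t\ge 0$, closing the Grönwall argument above.
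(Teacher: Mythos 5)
Your proposal is correct and follows essentially the same route as the paper: both identify $M(\rho)=\tfrac12\Wbar(\rho,\bar\delta_0)^2$, apply the monotonicity inequality with the (automatically optimal) product coupling against $\bar\delta_0$, and use \cref{assump:regularity_v,assump:continuity_v,assump:tangent_space} via \cref{lem:convergence_D_v} to justify differentiating the second moment along the flow. The only cosmetic difference is that you close the Grönwall argument with a Young inequality yielding a linear ODE in $M$, whereas the paper uses Hölder to obtain $\ddt\Wbar^2\le -2\lambda\Wbar^2+c\,\Wbar$ and bounds the resulting ball; both give a uniform bound of the same character.
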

\begin{proof}[Proof of \cref{lem:monotonicity_in_Pbar}]
     The sum of the second moments can be equated to $\Wbar(\bar \delta_0,\rho(t))^2$ since $\bar \delta_0= \nv(t)_\sharp \rho(t)$ with $\nv(t,x)=0$ for a.e. $x\in\supp \rho$ an optimal map, and therefore
    \begin{align*}
        \Wbar(\bar \delta_0,\rho(t))^2 = \sum_{i=1}^n \int \norm{x_i}^2 \d \rho_i(t,x_i) \,.
    \end{align*}
    Because \eqref{eq:monotone_flow_dynamics} is $\lambda$-monotone in $\A$ and $\A$ contains $\bar \delta_0$, the monotonicity condition ensures that
    \begin{align*}
         \int \<x,v[\rho(t)](x)-v[\bar \delta_0](0)> \d \rho(t,x) \le -\lambda\Wbar(\bar \delta_0,\rho(t))^2  \,.
    \end{align*}
    We use \cref{assump:regularity_v,assump:continuity_v,assump:tangent_space} to conclude from \cref{lem:convergence_D_v} that the $L^2$ norm of the velocity field is bounded.
    This allows us to apply \cite[Theorem 23.9]{Villani07} to compute the time derivative of the Wasserstein metric,
    \begin{align*}
        \ddt \Wbar(\rho(t),\bar \delta_0)^2 &= 2\int \<x,v[\rho(t)](x)> \d \rho(t,x)\,.
    \end{align*}
    Substituting  the monotonicity estimate for $\int \<x,v[\rho(t)](x)-v[\bar \delta_0](0)> \d \rho(t,x)$ results in
    \begin{align*}
        \ddt \Wbar(\rho(t),\bar \delta_0)^2 &\le -2\lambda \Wbar(\rho(t),\bar \delta_0)^2  +2 \int \<x,v[\bar \delta_0](0)>\d \rho(t,x) \\
        &\le -2\lambda \Wbar(\rho(t),\bar \delta_0)^2 + 2\left( \int \norm{x}^2 \d \rho(t,x) \right)^{1/2} \norm{v[\bar \delta_0](0)} \\
        & \le -2\lambda \Wbar(\rho(t),\bar \delta_0)^2 + c \Wbar(\rho(t),\bar \delta_0)\,,
    \end{align*}
    where $c=2\norm{v[\bar \delta_0](0)}<\infty$ by assumption, and the H\"older inequality was used on the second line.
    Then we have that $\Wbar(\rho(t),\bar \delta_0)^2=2M(\rho(t))$ exponentially converges to a ball with radius dependent on $c$ and $\lambda$. Therefore $M(\rho(t))$ is uniformly bounded for all time inside a ball of size $R$, where
    \begin{align*}
        R = \max \left \{ \frac{c}{4 \lambda}, M(\rho(0)) \right \} \,.
    \end{align*}
\end{proof}

 While \cref{lem:monotonicity_in_Pbar} applies to the  setting when $\A$ contains a Dirac,
it is possible for the results to hold in greater generality. When Diracs are not included in $\A$, such as when $v$ contains linear or nonlinear diffusion, applying monotonicity of \eqref{eq:monotone_flow_dynamics} in $\Pbar$  to control the second moment is not feasible; we provide an extension of the previous lemma below.
To show that the second moments are bounded under more general conditions, we will use a  mollifier $\rhotau\in\A$ as a proxy for $\bar \delta_0$, which satisfies $\Wbar(\rhotau,\bar \delta_0) < \infty$. For example, when $\A=L^1(\R^{d_i})^{\otimes n}$, selecting
    \begin{align*}
        (\rhotau(x))_i \coloneqq \frac{1}{(2\pi \tau)^{d_i/2}} \exp\left(-\frac{\norm{x_i}^2}{2\tau} \right)\,,
    \end{align*}
    results in $\Wbar(\rhotau,\bar \delta_0) =\sqrt{d\tau}$, and allows us to apply the monotonicity inequality on $\rhotau$ when it cannot be applied to $\bar \delta_0$.
    This mollifier can be selected to have tails appropriate for the given velocity fields. The following lemma gives conditions under which \cref{property:bdd_2nd_moment} is satisfied.

\begin{lemma}[Bounded Second Moments]\label{lem:bdd_second_moment}
     Let $v$ be $\lambda$-monotone in $\A$, $\rho(t)\in \A$ evolve according to \eqref{eq:monotone_flow_dynamics} for all $t\ge 0$, and \cref{assump:loc_abs_cont_curves} hold. Fix $\tau>0$. 
   If there exists $\rhotau\in\A$ such that 
   $$c_\tau \coloneqq \left(\int \norm{v[\rhotau](x)}^2 \d \rhotau(x) \right)^{1/2} < \infty \quad \text{ and } \Wbar(\rhotau,\bar \delta_0)<\infty\,,$$
then
   \begin{align*}
       \sqrt{2 M(\rho(t))} \le \begin{cases} c_1 + c_0 e^{-\lambda t}  & \text{if } \lambda \ne 0 \\
        c_2 + c_\tau t & \text{if } \lambda=0 \,,
        \end{cases}
    \end{align*}
    where 
    $$c_0\coloneqq \max\left\{0,\Wbar(\rho(0),\rhotau)-\frac{c_\tau}{\lambda}\right\},\ \ 
    c_1 = \frac{c_\tau}{\lambda}+\Wbar(\rhotau,\bar \delta_0)\,,\ \ 
    c_2=\Wbar(\rho(0),\rhotau) + \Wbar(\rhotau,\bar \delta_0)\,.$$
    In particular, if $\lambda>0$, then $\rho(t)$ satisfies \cref{property:bdd_2nd_moment}.
\end{lemma}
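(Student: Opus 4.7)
Since $\sqrt{2M(\rho(t))}=\Wbar(\rho(t),\bar\delta_0)$, the triangle inequality in $\Wbar$ (which follows from Minkowski in $\ell^2$ applied to the single-species $\Wass_2$ distances) gives
\[
\sqrt{2M(\rho(t))}\le \Wbar(\rho(t),\rhotau)+\Wbar(\rhotau,\bar\delta_0)\,,
\]
so the task reduces to controlling $f(t):=\Wbar(\rho(t),\rhotau)$ using $\rhotau$ as a fixed reference measure inside $\A$. The global hypotheses $c_\tau<\infty$ and $\Wbar(\rhotau,\bar\delta_0)<\infty$ guarantee both that the Cauchy–Schwarz step below is finite and that the additive constant from the triangle inequality is finite.

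\textbf{Derivative of $f^2$.} By \cref{assump:loc_abs_cont_curves}, the curve $t\mapsto \rho(t)$ is absolutely continuous in $\Wbar$, so the species-wise velocity $v[\rho(t)]$ represents its minimal tangent vector in $L^2(\rho(t))$ for a.e.\ $t$. Applying the Wasserstein derivative formula (\cite[Theorem 23.9]{Villani07} componentwise and summing over species) against the fixed measure $\rhotau$, for any optimal plan $\gamma_t\in\Gamma^*(\rho(t),\rhotau)$ and for a.e.\ $t\ge 0$,
\[
\ddt\Wbar(\rho(t),\rhotau)^2
=2\int \<x-y,\,v[\rho(t)](x)>\d\gamma_t(x,y)\,.
\]
Adding and subtracting $v[\rhotau](y)$ and applying $\lambda$-monotonicity of $v$ on the optimal plan $\gamma_t$ to the first piece, together with Cauchy–Schwarz and the hypothesis on $c_\tau$ for the second, gives
\[
\int \<x-y,\,v[\rho(t)](x)-v[\rhotau](y)>\d\gamma_t\le -\lambda\,\Wbar(\rho(t),\rhotau)^2\,,
\qquad
\int \<x-y,\,v[\rhotau](y)>\d\gamma_t\le c_\tau\,\Wbar(\rho(t),\rhotau)\,.
\]
Hence $\ddt f(t)^2\le -2\lambda f(t)^2+2c_\tau f(t)$ for a.e.\ $t\ge 0$.

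\textbf{Integrating the differential inequality.} At points where $f(t)>0$ the above reduces to $f'(t)\le -\lambda f(t)+c_\tau$. If $\lambda\ne 0$, a standard comparison with the linear ODE $\phi'=-\lambda\phi+c_\tau$ yields $f(t)\le \tfrac{c_\tau}{\lambda}+\bigl(f(0)-\tfrac{c_\tau}{\lambda}\bigr)e^{-\lambda t}$; when $\lambda>0$ and $f(0)\le c_\tau/\lambda$, the level set $\{f\le c_\tau/\lambda\}$ is forward-invariant (any upward crossing would violate $f'\le -\lambda f+c_\tau$), so the bound $f(t)\le c_\tau/\lambda$ holds, which is exactly what is encoded by $c_0=\max\{0,f(0)-c_\tau/\lambda\}$. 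If $\lambda=0$, direct integration gives $f(t)\le f(0)+c_\tau t$. Adding $\Wbar(\rhotau,\bar\delta_0)$ via the triangle inequality yields the two stated bounds with constants $c_1$ and $c_2$. When $\lambda>0$, both $c_0 e^{-\lambda t}$ decays to $0$ and $c_1$ is finite, so $M(\rho(t))$ is uniformly bounded by $\tfrac12 c_1^2+\text{decaying terms}$, giving \cref{property:bdd_2nd_moment}.

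\textbf{Anticipated obstacle.} The main technical care point is the chain-rule passage from $\ddt f^2$ to $\ddt f$ at zeros of $f$, which is finessed by working throughout with $f^2$ (absolutely continuous as a composition of Lipschitz-with-$g\in L^1$ of the $\Wbar$ distance) and invoking a Gronwall comparison directly on $f^2$, or equivalently on $f$ after splitting the time axis into $\{f>0\}$ and $\{f=0\}$. A secondary point is justifying the Wasserstein derivative formula under \cref{assump:loc_abs_cont_curves}: absolute continuity with speed bound $g\in L^1$ implies $v[\rho(t)]\in L^2(\rho(t))$ for a.e.\ $t$ by the characterization of the minimal tangent vector on absolutely continuous curves, which is exactly the regularity required for the derivative identity to hold.
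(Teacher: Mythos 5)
Your proposal is correct and follows essentially the same route as the paper's proof: the triangle inequality through the reference measure $\rhotau$, the time derivative of $\Wbar(\rho(t),\rhotau)^2$ (the paper cites \cite[Theorem 3.14]{cavagnari_dissipative_2023} rather than Villani for this step), adding and subtracting $v[\rhotau](y)$ to invoke monotonicity plus H\"older, and integrating the resulting differential inequality. The extra care you take at zeros of $f$ and with forward-invariance is a welcome refinement of details the paper leaves implicit, but it does not change the argument.
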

\begin{proof}
The second moment \eqref{def:second_moment} can be upper-bounded using the triangle inequality:
    \begin{align*}
    \sqrt{2M(\rho(t))}=\Wbar(\rho(t),\bar \delta_0) \le \Wbar(\rho(t),\rhotau) + \Wbar(\rhotau,\bar \delta_0)\,.
\end{align*}
Let $\gamma_i(t)\in\Gamma^*(\rho_i(t),\rhotau_i)$. Using \cref{assump:loc_abs_cont_curves} to apply \cite[Theorem 3.14]{cavagnari_dissipative_2023} for the time derivative of $\Wbar(\rho(t),\rhotau)^2$, 
 \begin{align*}
     \frac{1}{2}\ddt \Wbar(\rho(t),\rhotau)^2 &= \int \<x-y,v[\rho(t)](x)> \d \gamma(t,x,y) 
     \\
     &\le
     -\lambda \Wbar(\rho(t),\rhotau)^2 + \int\<x-y,v[\rhotau](y)> \d \gamma(t,x,y)\,,
 \end{align*}
 where the last line follows from adding and subtracting $\int\<x-y,v[\rhotau](y)>\d \gamma(t,x,y)$ and applying the monotonicity inequality. We use H\"older's inequality to obtain
 \begin{align*}
     \int\<x-y,v[\rhotau](y)> \d \gamma(t,x,y) &\le \bigg(\int \norm{x-y}^2 \d \gamma(t,x,y) \int \norm{v[\rhotau](x)}^2 \d \rhotau(x) \bigg)^{1/2}
     =c_\tau \Wbar(\rho(t),\rhotau)\,.
 \end{align*}
Combining the last two estimates,
 \begin{align*}
     \frac{1}{2}\ddt \Wbar(\rho(t),\rhotau)^2  \le
     -\lambda \Wbar(\rho(t),\rhotau)^2 + c_\tau \Wbar(\rho(t),\rhotau)\,.
 \end{align*}
 When $\lambda \neq 0$, 
 we have
 \begin{align*}
     \Wbar(\rho(t),\rhotau) \le \frac{c_\tau}{\lambda} + c_0e^{-\lambda t} \,,
 \end{align*}
 where $c_0= \max\{ 0,\Wbar(\rho(0),\rhotau)-\frac{c_\tau}{\lambda} \}$.
 When $\lambda=0$, 
 \begin{align*}
     \Wbar(\rho(t),\rhotau) \le \Wbar(\rho(0),\rhotau) + c_\tau t\,.
 \end{align*}
 Using the triangle inequality, the second moments of $\rho(t)$ satisfy
\begin{align*}
    \sqrt{2M(\rho(t))}= \Wbar(\rho(t),\bar \delta_0) \le \begin{cases}
        \frac{c_\tau}{\lambda} + c_0 e^{-\lambda t} + \Wbar(\rhotau,\bar \delta_0) & \text{if } \lambda \ne 0 \\
        \Wbar(\rho(0),\rhotau) + c_\tau t + \Wbar(\rhotau,\bar \delta_0) & \text{if } \lambda =0
    \end{cases}\,.
\end{align*}
Setting $c_1 = \frac{c_\tau}{\lambda} + \Wbar(\rhotau,\bar \delta_0)$ and $c_2=\Wbar(\rho(0),\rhotau) + \Wbar(\rhotau,\bar \delta_0)$ gives the desired inequality. From this inequality, we conclude that
if $\lambda>0$, then \cref{property:bdd_2nd_moment} is satisfied and if $\lambda \le 0$, then \cref{property:finite_2nd_moment} is satisfied.
\end{proof}

\subsection{Proof of main theorem}\label{sec:proof-main-thm}
In this section, we will prove our main result, \cref{thm:cv_W2}. The components of the proof and related assumptions are illustrated in \cref{fig:assumptions}. Monotonicity is used to prove convergence of the velocity fields and a uniform bound on the second moments; in the setting where invertible pushforwards exist between any two points along the trajectory, these two key results could then be used to prove the main contraction and convergence results. However, to generalize the results to transport plans, \cref{assump:loc_abs_cont_curves} takes the place of these results. In the coupled gradient flow setting, we use the fact that monotonicity of $F$ implies displacement convexity of each $F_i$, which we discuss in detail later in \cref{sec:convexity}. This, along with the lower semicontinuity assumption, allows us to conclude that the steady state $\rho^\infty$ is the Nash equilibrium of  $F$. 
\begin{figure}[tbhp]
\includegraphics[width=0.99\textwidth, trim={1.0cm 3.3cm 1.4cm 2.5cm}, clip]{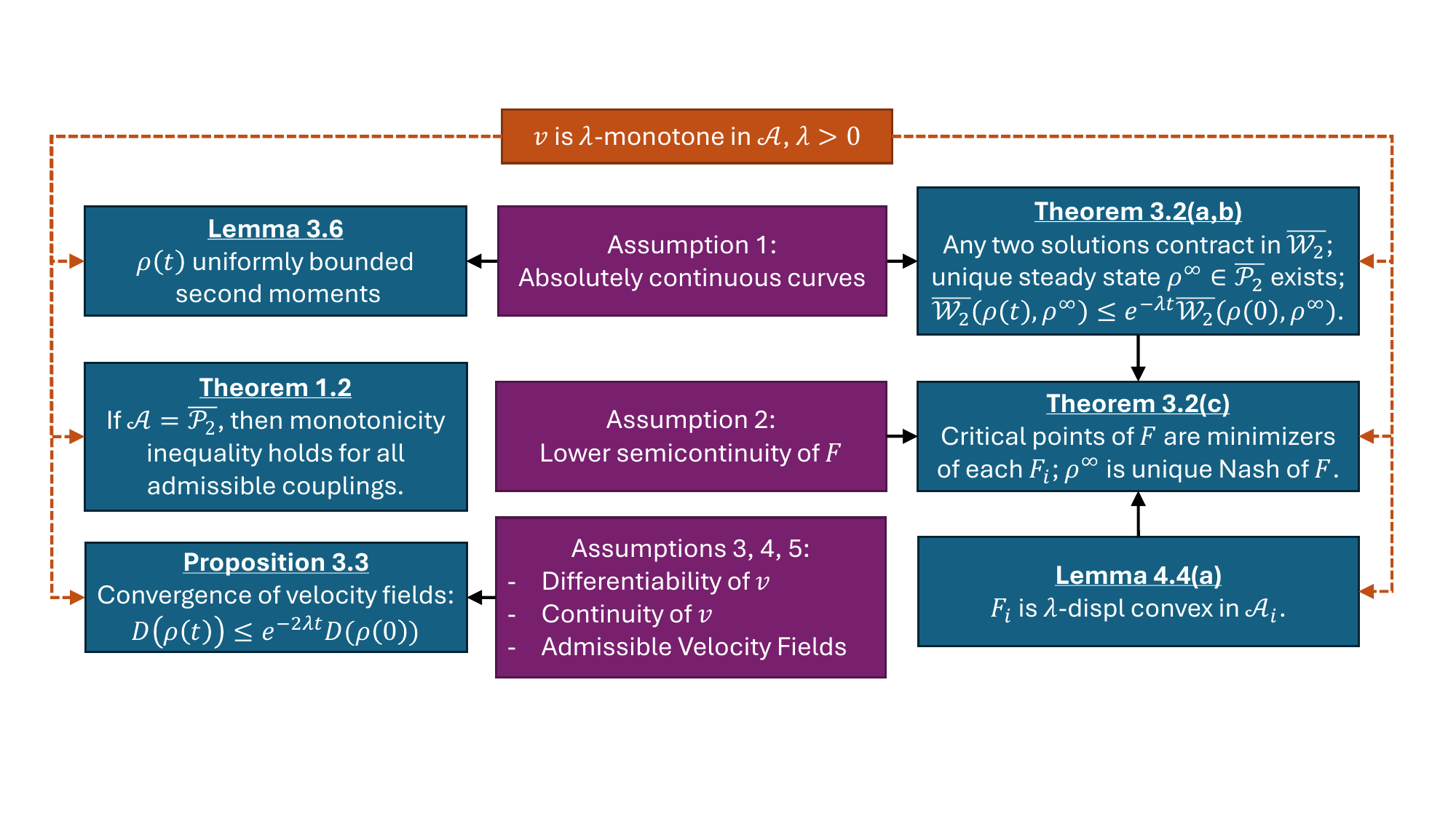}
    \caption{
    The relationships between main results (in blue), assumptions (in violet), and the monotonicity condition (in orange) are illustrated.
    Assuming that solutions to \eqref{eq:monotone_flow_dynamics} are absolutely continuous curves (\cref{assump:loc_abs_cont_curves}) allows us to compute the time derivative of the Wasserstein-2 metric via \cite[Thm 3.14]{cavagnari_dissipative_2023} to show contraction. Then via \cite[Lemma 7.3]{carrillo_contractive_2007}, a unique steady state $\rho^\infty\in\Pbar$ exists (\cref{thm:cv_W2}a). Convergence of $\rho(t)$ to $\rho^\infty$ follows by similar arguments (\cref{thm:cv_W2}b). If the dynamics are a coupled gradient flow, that is, take the form \eqref{eq:coupled_gradient_flow_dynamics}, then using \cref{assump:loc_abs_cont_curves}, \cref{assump:F_lsc} and displacement convexity of each $F_i$ (\cref{lem:monotonicity_convexity}a), we show that $\rho^\infty$ is critical point and the unique Nash equilibrium of $F$ (\cref{thm:cv_W2}c).
     The convergence of the velocity fields (\cref{lem:convergence_D_v}) and second moment bounds (\cref{lem:bdd_second_moment}) for solutions to \eqref{eq:monotone_flow_dynamics} follow from monotonicity. 
    }
    \label{fig:assumptions}
\end{figure}
To prove properties about the steady state and the Nash equilibrium in the coupled gradient flow setting, we utilize some additional lemmas.

\begin{lemma}[Uniqueness of Critical Points]\label{lem:uniqueness_NE}
    There is no more than one critical point $\rho^*\in \A$ for the game specified by $F$ under the monotonicity condition (\cref{def:monotonicity}) when $\lambda > 0$.
 \end{lemma}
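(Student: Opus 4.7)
The plan is to derive uniqueness by feeding any two hypothetical critical points into the monotonicity inequality \eqref{eq:monotonicity_condition_F} for $F$ and exploiting strict positivity of $\lambda$. First, I would fix what ``critical point'' should mean in this coupled gradient flow setting: a $\rho^{*}\in\A$ such that for every $i\in\{1,\dots,n\}$,
\begin{equation*}
\nabla_{x_{i}}\delta_{\rho_{i}}F_{i}[\rho^{*}](x_{i})=0 \quad \text{for } \rho_{i}^{*}\text{-a.e. } x_{i}.
\end{equation*}
This is the natural first-order stationarity condition for Wasserstein-2 gradient flows and matches the integration-by-parts characterization of steady states in \cref{def:steady_state_v} applied to $v_{i}=-\nabla_{x_{i}}\delta_{\rho_{i}}F_{i}$. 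Equivalently, $\ndr F[\rho^{*}]=0$ in $L^{2}(\rho^{*})$.

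Next, suppose $\rho^{*},\bar{\rho}^{*}\in\A$ are two such critical points and pick any optimal transport plan $\gamma\in\Gamma^{*}(\rho^{*},\bar{\rho}^{*})$. By $\lambda$-monotonicity of $F$ on $\A$ we have
\begin{equation*}
\int \langle x-y,\;\ndr F[\rho^{*}](x)-\ndr F[\bar{\rho}^{*}](y)\rangle \,\d\gamma(x,y)\;\geq\;\lambda\,\Wbar(\rho^{*},\bar{\rho}^{*})^{2}.
\end{equation*}
Because the first marginal of $\gamma$ is $\rho^{*}$ and $\ndr F[\rho^{*}]$ vanishes $\rho^{*}$-a.e., the integrand $\langle x-y,\ndr F[\rho^{*}](x)\rangle$ is zero $\gamma$-a.e. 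Symmetrically, the second marginal of $\gamma$ is $\bar{\rho}^{*}$ and $\ndr F[\bar{\rho}^{*}]$ vanishes $\bar{\rho}^{*}$-a.e., so $\langle x-y,\ndr F[\bar{\rho}^{*}](y)\rangle$ is $\gamma$-a.e.\ zero as well. Hence the left-hand side equals $0$, which combined with the bound above gives $\lambda\,\Wbar(\rho^{*},\bar{\rho}^{*})^{2}\le 0$. Since $\lambda>0$, this forces $\Wbar(\rho^{*},\bar{\rho}^{*})=0$, i.e., $\rho^{*}=\bar{\rho}^{*}$.

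The argument is short and uses only the monotonicity inequality together with the marginal property of $\gamma$; no compactness, lower semicontinuity (\cref{assump:F_lsc}), or absolute continuity (\cref{assump:loc_abs_cont_curves}) of curves is invoked. The only place where some care is needed is identifying the correct notion of ``critical point'' as $\ndr F[\rho^{*}]=0$ in $L^{2}(\rho^{*})$ (rather than as a strict pointwise zero on all of $\R^{d}$), since it is this a.e.-vanishing that, together with the marginals of the optimal plan, kills the left-hand side of the monotonicity inequality.
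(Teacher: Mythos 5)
Your proof is correct and follows essentially the same route as the paper: both take a critical point to mean $\nabla_{x_i}\delta_{\rho_i}F_i[\rho^*]=0$ $\rho_i^*$-a.e.\ (the paper derives this from the Euler--Lagrange condition $\delta_{\rho_i}F_i[\rho^*]=\mathrm{const}$ on the support), substitute into the monotonicity inequality to make the left-hand side vanish, and conclude $\lambda\,\Wbar(\rho^*,\bar\rho^*)^2\le 0$. Your explicit remark that the a.e.\ vanishing transfers to $\gamma$-a.e.\ vanishing via the marginals of the optimal plan is a slightly more careful articulation of a step the paper leaves implicit, but the argument is the same.
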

\begin{proof}
    Consider $\rhozero,\rhoone \in \A$ which are both critical points. The Euler-Lagrange condition for each $F_i$ requires that $\delta_{\rho_i} F_i[\rhozero](x_i)=b_i$ for $\rhozero_i$-a.e. $x_i$, and $\delta_{\rho_i}F_i[\rhoone](x_i) = c_i$ for  $\rhoone_i$-a.e. $x_i$ for some constants $b_i, c_i \in \R$. 
    Therefore $\nabla_{x_i}\delta_{\rho_i} F_i[\rhozero](x_i)=0$ for $\rhozero_i$-a.e. $x_i$ and $\nabla_{x_i}\delta_{\rho_i}F_i[\rhoone](x_i) = 0$ for  $\rhoone_i$-a.e. $x_i$. Using these values in the monotonicity definition, we have
    \begin{align*}
        &\int \<x -y, \ndr F[\rhozero](x)-\ndr F[\rhoone](y)>\d \gamma(x,y) 
        = \int \< x-y, 0>\d \gamma(x,y) =0 \ge \lambda \Wbar(\rhozero,\rhoone)^2    \,.
    \end{align*}
    Hence, $\Wbar(\rhozero,\rhoone)=0$, and there is at most one critical point in $\A$. 
\end{proof}

\begin{lemma}[Steady State is Unique Critical Point]\label{lem:ss_critical_point}
    Under $\lambda$-monotonicity of $F$ in $\A$, a steady state of \eqref{eq:coupled_gradient_flow_dynamics} $\rho^\infty \in\A\cap S$ is the unique critical point of $F$ in $\A$.
\end{lemma}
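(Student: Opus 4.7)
The plan is to first upgrade the weak steady‑state identity into the pointwise Euler--Lagrange condition $\nabla_{x_i}\delta_{\rho_i} F_i[\rho^\infty] = 0$ $\rho_i^\infty$-a.e.\ (i.e.\ $\rho^\infty$ is a critical point), and then invoke \cref{lem:uniqueness_NE} to promote this to uniqueness in $\A$. The gradient structure of $v$ and the integrability provided by the condition $\rho^\infty \in S$ are precisely what is needed to pass from the weak to the strong statement.

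First, I would unwind \cref{def:steady_state_v} for $v_i[\rho^\infty] = -\nabla_{x_i}\delta_{\rho_i} F_i[\rho^\infty]$: for each $i$,
\begin{align*}
   \int \nabla \phi(x_i) \cdot \nabla_{x_i}\delta_{\rho_i} F_i[\rho^\infty](x_i)\, \d \rho_i^\infty(x_i) = 0 \qquad \forall\,\phi \in C_c^\infty(\R^{d_i}).
\end{align*}
Because $\rho^\infty \in S$, we have $\nabla_{x_i}\delta_{\rho_i} F_i[\rho^\infty] \in L^2(\rho_i^\infty|_\Omega)$ for every compact $\Omega \subset \R^{d_i}$, so the above identity is a bona fide $L^2$-orthogonality of $\nabla_{x_i}\delta_{\rho_i} F_i[\rho^\infty]$ to every $C_c^\infty$-gradient.

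The main step is to test this orthogonality against the target itself. Since $\nabla_{x_i}\delta_{\rho_i} F_i[\rho^\infty]$ is both a gradient and an element of $L^2(\rho_i^\infty)$, I would approximate it in the $L^2(\rho_i^\infty)$-norm by a sequence $(\nabla \phi_n)_n$ with $\phi_n \in C_c^\infty(\R^{d_i})$, using a standard mollification-plus-cutoff construction applied to $\delta_{\rho_i} F_i[\rho^\infty]$; this is exactly the density statement underlying the characterisation of the Wasserstein tangent space at $\rho_i^\infty$ used in \cref{assump:tangent_space} (cf.\ \cite[Thm.~8.5.1]{ags}). Passing to the limit in the steady-state identity with $\phi = \phi_n$ produces
\begin{align*}
    0 = \lim_{n\to\infty} \int \nabla \phi_n \cdot \nabla_{x_i}\delta_{\rho_i} F_i[\rho^\infty]\, \d \rho_i^\infty = \int \bigl|\nabla_{x_i}\delta_{\rho_i} F_i[\rho^\infty]\bigr|^2 \d \rho_i^\infty,
\end{align*}
so $\nabla_{x_i}\delta_{\rho_i} F_i[\rho^\infty] = 0$ for $\rho_i^\infty$-a.e.\ $x_i$. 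Repeating this for each $i \in \{1,\dots,n\}$ gives precisely the Euler--Lagrange characterisation of a critical point used in the proof of \cref{lem:uniqueness_NE}. Since $\rho^\infty \in \A$, that lemma (which requires $\lambda>0$, supplied here) delivers uniqueness and identifies $\rho^\infty$ as \emph{the} unique critical point of $F$ in $\A$.

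The hard part is the density/approximation step: one must justify that the specific gradient $\nabla_{x_i}\delta_{\rho_i} F_i[\rho^\infty]$ can be approximated in $L^2(\rho_i^\infty)$ by gradients of test functions. The $S$-condition is tailored to exactly this: it places the target in the right function space so that a truncation-and-mollification of $\delta_{\rho_i} F_i[\rho^\infty]$ yields an approximating sequence. All other ingredients---the weak form of the steady-state equation, the gradient structure of $v$, and the uniqueness of critical points under $\lambda$-monotonicity---are either definitional or already established earlier in the paper.
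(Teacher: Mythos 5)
Your proposal is correct and follows essentially the same route as the paper: derive the pointwise Euler--Lagrange condition $\nabla_{x_i}\delta_{\rho_i}F_i[\rho^\infty]=0$ for $\rho_i^\infty$-a.e.\ $x_i$ from the weak steady-state identity, then invoke \cref{lem:uniqueness_NE} for uniqueness. The paper asserts the first implication without detail, whereas you justify it via density of $C_c^\infty$-gradients in $L^2(\rho_i^\infty)$ --- a genuinely needed step given that the weak identity alone does not force a non-gradient field to vanish --- with the only minor wrinkle being that $\rho^\infty\in S$ gives local rather than global $L^2$ control, so the approximation should be localized with cutoffs.
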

\begin{proof} 
     The steady state $\rho^\infty\in S$ satisfies 
     \begin{align*}
         \int \nabla \phi \cdot \nabla_{x_i} \delta_{\rho_i} F_i[\rho^\infty](x_i)\d \rho_i^\infty(x_i) = 0 \quad \forall\, \phi \in C_c^\infty(\R^{d_i})\,,
     \end{align*}
     implying $\nabla_{x_i} \delta_{\rho_i} F_i[\rho^\infty](x_i)=0$ for $\rho^\infty_i$ a.e. $x_i$, and so $\delta_{\rho_i} F_i[\rho^\infty](x_i)=c_i$ 
     for some constant $c_i\in\R$, $\rho_i^\infty$ a.e. $x_i$; therefore $\rho^\infty$ is a critical point of each $F_i$.
    From \cref{lem:uniqueness_NE}, there is at most one critical point in $\A$, so $\rho^\infty\in\A$ is the unique critical point in $\A$.
\end{proof}
\begin{lemma}[Steady State is Nash Equilibrium]\label{lem:ss_Nash}
    Under $\lambda$-monotonicity in $\A$ and \cref{assump:F_lsc}, a steady state $\rho^\infty \in \A \cap S$ of \eqref{eq:coupled_gradient_flow_dynamics} is the unique Nash equilibrium of $F$ in $\A$, according to \cref{def:Nash_eq}.
\end{lemma}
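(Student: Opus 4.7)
The plan is to leverage that $\lambda$-monotonicity of $F$ implies displacement convexity of each $F_i[\cdot,\rhoni]$ with respect to its own variable (\cref{lem:monotonicity_convexity}), and combine this with the criticality of $\rho^\infty$ already provided by \cref{lem:ss_critical_point}.

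First, I would observe that by \cref{lem:ss_critical_point}, $\rho^\infty$ is the unique critical point of $F$ in $\A$; in particular, for every $i$ one has $\nabla_{x_i}\delta_{\rho_i}F_i[\rho^\infty](x_i)=0$ for $\rho_i^\infty$-a.e.\ $x_i$. To show $\rho^\infty$ satisfies the Nash inequality \eqref{eq:mixed_nash_equilibrium}, fix $i\in\{1,\dots,n\}$ and an arbitrary $\rho_i\in\A_i$, and pick $\gamma_i\in\Gamma_i^*(\rho_i^\infty,\rho_i)$. Since the functional $\rho_i\mapsto F_i[\rho_i,\rho^\infty_{-i}]$ is uniformly displacement convex on $\A_i$ with modulus $\lambda$, the first-order condition \eqref{eq:first_order_convexity} applied at $(\rho_i^\infty,\rho_i)$ yields
\begin{equation*}
\int \<\nabla_{x_i}\delta_{\rho_i}F_i[\rho^\infty](x),\,y-x> \d\gamma_i(x,y)\le F_i[\rho_i,\rho^\infty_{-i}]-F_i[\rho^\infty]-\frac{\lambda}{2}\Wass_2(\rho_i^\infty,\rho_i)^2.
\end{equation*}
The left-hand side vanishes by criticality (well-definedness of the integrand is ensured by $\rho^\infty\in S$), so rearranging gives $F_i[\rho^\infty]+\frac{\lambda}{2}\Wass_2(\rho_i^\infty,\rho_i)^2\le F_i[\rho_i,\rho^\infty_{-i}]$, which is the desired Nash inequality.

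For uniqueness, I would argue that any Nash equilibrium $\rho^\star\in\A$ is automatically a critical point of $F$. By the Nash property, $\rho_i^\star$ minimizes $F_i[\cdot,\rho^\star_{-i}]$ over $\A_i$; \cref{assump:F_lsc} ensures that the minimization problem is well posed under admissible variations, so that the Euler--Lagrange identity $\nabla_{x_i}\delta_{\rho_i}F_i[\rho^\star](x_i)=0$ holds $\rho_i^\star$-a.e. Hence $\rho^\star$ is a critical point of $F$ in $\A$, and \cref{lem:uniqueness_NE} (applicable since $\lambda>0$) forces $\rho^\star=\rho^\infty$.

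The main technical subtlety lies in the two directions of the first-variation correspondence inside the constrained domain $\A$. For the critical $\Rightarrow$ Nash direction, the vanishing of the first variation at $\rho^\infty$ only gives a local statement, and it is displacement convexity of $F_i[\cdot,\rho^\infty_{-i}]$ on $\A_i$, together with the first-order inequality \eqref{eq:first_order_convexity}, that upgrades this to global optimality against all competitors $\rho_i\in\A_i$; this is where \cref{lem:monotonicity_convexity} does the heavy lifting. For the Nash $\Rightarrow$ critical direction, one must verify that admissible perturbations of a minimizer inside $\A_i$ indeed produce the Euler--Lagrange identity: \cref{assump:F_lsc} together with $\rho^\star\in S$ (so that $\nabla_{x_i}\delta_{\rho_i}F_i[\rho^\star]\in L^2(\rho_i^\star)$) provides the regularity to justify this step.
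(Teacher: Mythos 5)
Your proof is correct, but it reaches the Nash property by a genuinely different route than the paper. The paper's proof never invokes the first-order convexity inequality \eqref{eq:first_order_convexity} at the critical point; instead it uses \cref{assump:F_lsc} together with displacement convexity to get \emph{existence} of a (unique) minimizer of $F_i[\cdot,\rhoni^\infty]$ via \cite[Lemma 2.4.8]{ags}, observes that this minimizer must itself be a critical point, and then concludes from \cref{lem:uniqueness_NE} that the minimizer coincides with $\rho^\infty$. You instead apply the first-order characterization of $\lambda$-uniform displacement convexity directly at $\rho^\infty$, where the first variation vanishes, to read off global optimality $F_i[\rho^\infty]+\tfrac{\lambda}{2}\Wass_2(\rho_i^\infty,\rho_i)^2\le F_i[\rho_i,\rhoni^\infty]$. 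Your route buys a quantitative optimality gap and makes \cref{assump:F_lsc} essentially superfluous for the ``critical $\Rightarrow$ Nash'' direction, at the cost of needing the regularity behind the equivalence in \eqref{eq:first_order_convexity} (Villani's Proposition 16.2), whereas the paper's route needs only the convexity inequality along geodesics plus lower semicontinuity. You also make the uniqueness direction (any Nash equilibrium satisfies the Euler--Lagrange condition, hence is a critical point, hence equals $\rho^\infty$) explicit, which the paper leaves implicit; your appeal to \cref{assump:F_lsc} to justify the Euler--Lagrange identity at a minimizer is a little loose --- lower semicontinuity is not really the tool that produces first-order optimality conditions --- but this is at the same level of rigor as the paper itself, which relies on the same implicit step when it asserts that the minimizer ``must be'' the unique critical point.
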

\begin{proof}
    From \cref{lem:ss_critical_point}, the steady state $\rho^\infty$ is the unique critical point of $F$. It remains to show that the critical point is also the Nash equilibrium, that is, that $\rho^\infty$ satisfies
    \begin{align*}
        F_i[\rho_i^\infty,\rhoni^\infty] \le F_i[\rho_i,\rhoni^\infty]\quad \forall\ \rho_i\in \A \,,\quad \forall\ \iin\,.
    \end{align*}
    By \cref{lem:monotonicity_convexity}(a),
    $F_i[\cdot,\rhoni]$ is displacement convex for all $\rhoni\in\A$. By \cref{assump:F_lsc}, $F_i[\cdot,\rhoni]$ is lower semi-continuous in the weak topology of $(\P_2,\Wass_2)$. From \cite[Lemma 2.4.8]{ags}, a unique minimizer of $F_i[\cdot,\rhoni]$ exists in $\P_2$, and because the critical point of the game $F$ is unique in $\A \subseteq \Pbar$, it must be the minimizer. Applying this for all $\iin$, the Nash condition holds and therefore the steady state is a Nash equilibrium. 
    \end{proof}
\begin{proof}[Proof of \cref{thm:cv_W2}]
Recall the definition of the set $S$
\begin{align*}
    S = \left\{ \rho \in \Pbar \ : \ \int_\Omega \left\| v[\rho](x)\right\|^2 \d \rho(x) < \infty \quad \forall \ \Omega\subset \R^d\,,\ \Omega\ \text{compact} \right\}\,.
\end{align*}
        Proof of (a): The proof follows from showing that the dynamics are a contraction in the joint Wasserstein-2 metric. 
        Let $\rho(t)$ and $\eta(t)$ be two solutions to \eqref{eq:monotone_flow_dynamics}. 
               We have from \cref{assump:loc_abs_cont_curves} that $\rho(t),\eta(t)\in\A$ are absolutely continuous curves.
        Hence, the time derivative of the joint Wasserstein-2 metric between the $\rho$ and $\eta$ is
    \begin{align*}
        \ddt \Wbar(\rho(t),\eta(t))^2 &= 2 \int \<x-y,v[\eta(t)](x)-v[\rho(t)](y)>\d \gamma(t,x,y)\,,
    \end{align*}
    by \cite[Theorem 3.14]{cavagnari_dissipative_2023}, where $\gamma_i(t)\in\Gamma^*(\rho_i(t),\eta_i(t))$. 
    The right-hand side of the inequality is exactly the negative of the left-hand side of the monotonicity definition, so by $\lambda$-monotonicity in $\A$,
    \begin{align*}
        \ddt \Wbar(\rho(t),\eta(t))^2 \le -2\lambda \Wbar(\rho(t),\eta(t))^2\,.
    \end{align*}
    By Gr\"onwall's inequality, $\Wbar(\rho(t),\eta(t)) \le e^{-\lambda t} \Wbar(\rho(0),\eta(0))$, and therefore the dynamics are contractive in the joint Wasserstein-2 metric. From \cite[Proposition 7.1.5]{ags}, $\P_2(\R^d)$ endowed with the Wasserstein-2 metric is complete because $\R^d$ is complete, and since $\rho(t),\eta(t)$ are absolutely continuous, we can apply
     \cite[Lemma 7.3]{carrillo_contractive_2007} to obtain the existence of a unique stationary point $\rho^\infty\in\Pbar$. 
    If $\A$ is compact and the solution stays in $\A$ for all time, the long-time limit $\rho^\infty$ is also in $\A$.

    \sloppy Proof of (b): This step mirrors the convergence shown for $V_1(x)=\frac{1}{2}\norm{x-x^*}^2$ in the Euclidean setting of monotonicity. We use that $\ddt \Wbar(\rho(t),\rho^\infty)^2 = \sum_{i=1}^n \ddt \Wass_2(\rho_i(t),\rho_i^\infty)^2$ and let $\gamma_i(t)\in\Gamma^*(\rho_i(t),\rho_i^\infty)$. Using \cite[Theorem 3.11]{cavagnari_dissipative_2023} for the time derivative of the Wasserstein-2 metric, we have
     \begin{align*}
          \sum_{i=1}^n \ddt \Wass_2(\rho_i(t),\rho_i^\infty)^2 &= 2 \sum_{i=1}^n \int \<x_i - y_i,v_i[\rho(t)](x_i)> \d \gamma_i(t,x_i,y_i)
     \end{align*}
      Since $v_i[\rho^\infty](y_i)=0$ for $\rho_i^\infty$-a.e. $y_i$,  
     \begin{align*}
        \left| \int \<x_i,v_i[\rho^\infty](y_i)>\d\gamma_i(t,x_i,y_i)\right| &\le \sqrt{\int \norm{x_i}^2 \d \rho_i(t,x_i)\int \norm{v_i[\rho^\infty](y_i)}^2 \d\rho_i^\infty(y_i)} = 0\,, 
     \end{align*}
     and so we can add this term to the time derivative:
     \begin{align*}
         \sum_{i=1}^n\ddt \Wass_2(\rho_i(t),\rho_i^\infty)^2 = 2\sum_{i=1}^n\int \<x_i - y_i,v_i[\rho(t)](x_i)-v_i[\rho^\infty](y_i)> \d \gamma(t,x,y)\,.
     \end{align*}
    Applying the monotonicity condition with the optimal plan, using that $\rho^\infty\in\A$, gives
     \begin{align*}
         \sum_{i=1}^n \ddt \Wass_2(\rho_i(t),\rho_i^\infty)^2 &\le -2 \lambda \sum_{i=1}^n \Wass_2(\rho_i(t),\rho_i^\infty)^2\,,
     \end{align*}
     which results in $\ddt \Wbar(\rho(t),\rho^\infty)^2 \le -2\lambda \Wbar(\rho(t),\rho^\infty)^2$. Exponential convergence follows from Gr\"onwall's inequality.

     Part (c) follows from \cref{lem:ss_Nash}.
\end{proof}

\section{Coupled Gradient Flow Setting}\label{sec:convexity}
In this section, we present and discuss results for the setting where the dynamics are a coupled gradient flow. 
Monotonicity can be viewed as a \emph{extension of displacement convexity to the multispecies setting}, where instead of having a condition on a single energy functional, the condition holds for the set of energies $F=[F_1,\dots,F_n]$. The notion of monotonicity also generalizes the results in \cite{conger_coupled_2024}, which proves exponential convergence for a two-species min-max flow with $F_1=-F_2$. For a displacement convex-concave game $F=[F_1,F_2]$ with $F_1=-F_2$, $F$ is an example of a $\lambda$-monotone set of energies.
Recall the dynamics \eqref{eq:coupled_gradient_flow_dynamics}, where each species $i$ evolves in the direction of steepest descent for $F_i[\cdot,\rhoni]$ in the $\Wass_2$ metric:
\begin{align*}
    \partial_t \rho_i(t) = -\nabla_{\Wass_2,\rho_i} F_i[\rho(t)] = \divs{x_i}{\rho_i \nabla_{x_i}\delta_{\rho_i}F_i[\rho(t)]}\,,\quad \forall\, \iin\,.
\end{align*}
Recall also the definition of monotonicity for coupled gradient flows, \cref{def:monotonicity}.
    Let $\A$ be a geodesically convex set. If for any $\rhozero,\rhoone\in\A$, it holds that
\begin{align*}
          \lambda \Wbar(\rhozero,\rhoone)^2 \le \int \<x-y,\ndr F[\rhozero](x)-\ndr F[\rhoone](y)> \d \gamma(x,y) \,,
    \end{align*}
     for $\gamma\in\Gamma^*(\rhozero,\rhoone)$, then the set of \emph{energy functionals} $\{F_i:\A \to \R\}_{i=1}^n$ is \emph{$\lambda$-monotone} in $\A$.
In this section, we show how monotonicity implies convexity but convexity is does not necessarily imply monotonicity, because monotonicity accounts for dependencies among species. Like convexity, monotonicity is additive, and we show how to relate lower-bounds on Hessians of cross-interaction kernels to the monotonicity coefficient $\lambda$.  The results are summarized in the following properties:
\begin{enumerate}
    \item[(C1)] Monotonicity implies convexity: if $F$ is $\lambda$-monotone in $\A$, then each $F_i[\cdot,\rhoni]$ is $\lambda$-displacement convex in $\A_i$ for all $\rhoni\in\A_{-i}$. See \cref{lem:monotonicity_convexity}, \cref{ex:convexity_neq_monotonicity} (\cref{subsec:monotonicity_implies_convexity}).
    \item[(C2)] Additivity of monotonicity: if $F$ is $\lambda$-monotone in $\A$ and $E$ is $\lambda$-monotone in $\B$, then $F+E$ is $\lambda$-monotone in $\A \cap \B$. See  \cref{lem:additivity}, \cref{ex:monotoncity_nonlinear_diffusion} (\cref{subsec:additivity}).
    \item[(C3)] Monotonicity of cross-interaction kernels: for $\mathcal{W}_{ij}$ defined as $\mathcal{W}_{ij}=\int W_{ij}(x_i,x_j)\d\rho_i(x_i)\d\rho_j(x_j)$, $F_i=\sum_{j\ne i} \mathcal{W}_{ij}$, let $\alpha_{ij}$ be the Lipschitz coefficient of $\nabla_{x_i} W_{ij}(x_i,\cdot)$  and $\sum_{i\ne j}\nabla^2_{x_i} W_{ij}(x_i,x_j)\succeq c_i \Id_{d_i}$. Define 
    \begin{align*}
        \Lambda := \begin{bmatrix}
            c_1 & -\alpha_{12} & \dots & -\alpha_{1n} \\
            -\alpha_{21} & c_2 & & -\alpha_{2n} \\
            \vdots &  & \ddots &  \vdots \\
            -\alpha_{n1} & -\alpha_{n2} & \dots & c_n
        \end{bmatrix}\,.
    \end{align*}Then
    $F$ is $\lambda$-monotone in $\A$, where $\frac{1}{2}(\Lambda + \Lambda^\top) \succeq \lambda \Id_{d}$. See
 \cref{lem:interaction_kernels}, \cref{ex:interaction_kernels} (\cref{subsec:monotonicity_cross_interaction}).
\end{enumerate}

To prove these properties, we use an equivalent, second-order notion of monotonicity. The second-order definition parallels the finite-dimension notion; we first present the second-order finite-dimension definition with the infinite-dimensional version following.
In a finite-dimensional game in $\R^d$, each player selects a deterministic vector $x_i\in \R^{d_i}$ minimizing a cost function $f_i:\R^d\to \R$, which depends on the selections of all other players. When each player does gradient descent on its cost function, the dynamics are $\dot x_i = -\nabla_{x_i} f_i(x)$. The definition of monotonicity for a finite-dimensional game is given in terms of the vector of gradients.
\begin{definition}[Finite-Dimensional Monotonicity]\label{def:finite_dim_monotone}
    Let $f=[f_1,\dots,f_n]:\R^d \to \R^n$ where $f_i:\R^d \to \R$, and define $\overline \nabla f(x) \coloneqq [ \nabla_{x_1} f_1(x),\dots, \nabla_{x_n} f_n(x)]\in\R^d$ and $\overline \nabla^2 f(x) \in\R^{d \times d}$ such that $(\overline \nabla^2 f(x))_{ij}=\nabla_{x_j x_i}^2 f_i(x) \in \R^{d_i \times d_j}$.
    A game $f$ is monotone if it satisfies
    \begin{align*}
        &\text{\emph{(first order)}} \qquad \<\overline \nabla f(x)-\overline \nabla f (x'),x-x'> \ge \lambda \norm{x-x'}^2\quad \forall\ x,x'\in\R^d\,, \text{ or} \\
        &\text{\emph{(second order)}}\qquad
        \overline \nabla^2 f(x) + \overline \nabla^2 f(x)^\top \succeq 2\lambda \Id_d \quad \forall\ x\in\R^d\,,
    \end{align*}
    for some $\lambda \in\R$. The game is strongly monotone if $\lambda>0$.
\end{definition}
Importantly, $\overline \nabla$ is \textit{not} a gradient; it is an entry-wise operator. In order to present the corresponding infinite-dimensional result, we define Hessian-like operators $\nndr$ and $\nnddr$.
\begin{definition}[Matrix Operators]\label{def:matrix-ops}
For sufficiently regular $F$, the operators $\nndr$ and $\nnddr$ are defined as follows, where $(\cdot)_{ij}$ denotes the $i^{th}$ row block and $j^{th}$ column block:
    \begin{align*}
    \left(\nndr F[\rho](x)\right)_{ij} &\coloneqq 
    \begin{cases}
        \nabla_{x_i}^2 \delta_{\rho_i} F_i[\rho](x_i) & \text{if } i=j \\
        0 & \text{if } i \ne j
    \end{cases} \quad \in \R^{d_i\times d_i}\,, \\
    \left(\nnddr F[\rho](x,\hat x)\right)_{ij} &\coloneqq \nabla^2_{x_i \hat x_j} \delta^2_{\rho_i,\rho_j} F_i[\rho](x_i,\hat x_j) \quad \in \R^{d_i \times d_j}\,.
\end{align*}
\end{definition}
\begin{proposition}[Reformulation of Monotonicity] \label{prop:monotonicity_implication}
    Let \cref{assump:regularity_v} hold.
    Then $\lambda$-monotonicity in $\A$ holds if and only if for any two $\rhozero,\rhoone\in \A$, 
    \begin{align*}
        &\int_0^1 \bigg( \iint  \< x-y,\nnddr F[\rhos](I^{(s)}(x,y),I^{(s)}(\hat x,\hat y)) \cdot (\hat x-\hat y)>\d\gamma(x,y)\d\gamma(\hat x,\hat y) \\
        &\qquad +\int \< x-y,\nndr F[\rhos](I^{(s)}(x,y)) \cdot (x-y)> \d \gamma(x,y)\bigg) \d s \ge \lambda \Wbar(\rhozero,\rhoone)^2 \,,
    \end{align*}
    where $\rhos$ is the displacement interpolant between $\rhozero$ and $\rhoone$, and $\gamma\in\Gamma^*(\rhozero,\rhoone)$.\\
   If additionally \cref{assump:continuity_v,assump:tangent_space} hold, then $F:\A\to\R^n$ is $\lambda$-monotone in $\A$ if and only if 
    \begin{align*}
        &\iint  \< w(x),\nnddr F[\rho](x,\hat x) \cdot w(\hat x )>\d\rho(x)\d\rho(\hat x) \\
        &\qquad +\int \< w(x),\nndr F[\rho](x) \cdot w(x)> \d \rho(x) \ge \lambda \int \norm{w(x)}^2 \d \rho(x) \,.
    \end{align*}
    for all $\rho\in\A$, $w_i\in Y_i(\rho_i,\A_i)$ for all $\iin$. 
\end{proposition}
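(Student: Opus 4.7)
The plan is to pass between the three formulations (the original \cref{def:monotonicity}, the integrated geodesic form, and the pointwise form) via a first--variation computation along the displacement interpolant $\rhos$. The starting observation is that for sufficiently regular $F$, the fundamental theorem of calculus along the geodesic yields
\begin{equation*}
\ndr F[\rhoone](y) - \ndr F[\rhozero](x) = \int_0^1 \dds \Big( \ndr F[\rhos](I^{(s)}(x,y)) \Big) \d s\,,
\end{equation*}
so the core task is to compute the integrand. By the chain rule, the derivative splits into a spatial piece and a measure piece. The spatial piece contributes $\nndr F[\rhos](I^{(s)}(x,y)) \cdot (y - x)$, since $\partial_s I^{(s)}(x,y) = y - x$. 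For the measure piece, I would use the definition of the first variation together with the continuity equation \eqref{eq:geodesic_eqns} for the geodesic $\partial_s \rhos + \divs{\hat x}{\rhos \ws} = 0$; after integrating by parts and reparameterizing $\hat z = I^{(s)}(\hat x, \hat y)$ with $(\hat x,\hat y)\sim\gamma$, this contributes $\iint \nnddr F[\rhos](I^{(s)}(x,y),I^{(s)}(\hat x,\hat y))\cdot(\hat y - \hat x)\,\d\gamma(\hat x,\hat y)$. Pairing with $(x-y)$ under $\gamma$ and integrating in $s$ produces exactly the first reformulation, and reversing the argument recovers \cref{def:monotonicity}. Here \cref{assump:regularity_v} provides the regularity needed to justify differentiation under the integrals.

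For the second (pointwise) equivalence, I would argue both directions. From pointwise to integrated, set $\rho = \rhos$ and take the tangent vector $w$ at $\rhos$ to be the geodesic velocity, which in Lagrangian form is $w(I^{(s)}(x,y)) = y - x$ so that $\int \|w\|^2 \d\rhos = \Wbar(\rhozero,\rhoone)^2$ at every $s\in[0,1]$; geodesic convexity of $\A$ together with \cref{assump:tangent_space} ensures $w_i \in Y_i(\rhos_i,\A_i)$ at each $s$, so the pointwise inequality applies and integrates against $\d s$ to yield the first form. For the converse, I would perturb: given $\rho\in\A$ and $w_i \in Y_i(\rho_i,\A_i)$, approximate $w_i$ by $c(r_i - \id)$ with $(r_i)_\sharp \rho_i \in \A_i$ (per the definition of $Y_i$) and set $\rho_\eps = ((1-\eps)\id + \eps r)_\sharp \rho$, so that $\gamma_\eps = (\id \times ((1-\eps)\id+\eps r))_\sharp \rho$ is an optimal coupling between $\rho$ and $\rho_\eps$. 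Plugging $\rho$ and $\rho_\eps$ into the integrated form, dividing by $\eps^2$, and letting $\eps \downarrow 0$, the spatial and measure Jacobian integrands converge by \cref{assump:continuity_v} uniformly in $s\in[0,1]$, producing the quadratic form at $\rho$ evaluated on $w$, bounded below by $\lambda \int \|w\|^2 \d\rho$. A density argument in $L^2(\rho)$ closes $Y_i$ under the inequality.

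The main obstacle will be the passage to the limit in the perturbation argument: one must show that
\begin{equation*}
\frac{1}{\eps^2} \int_0^1 \Big( \text{integrated Jacobian quadratic form at } \rho^{(s\eps)}, I^{(s\eps)} \Big) \d s \longrightarrow \text{Jacobian quadratic form at } \rho\,,
\end{equation*}
which is precisely what \cref{assump:continuity_v} is tailored to deliver, combined with the fact that the displacement interpolant between $\rho$ and $\rho_\eps = (\id + \eps w + o(\eps))_\sharp \rho$ is $\rho^{(s\eps)}$ with associated map close to $\id + s\eps w$. A careful accounting of the $o(\eps)$ terms and the fact that $r_i$ may differ from $\id + \eps^{-1} w_i$ (we only approximate $w_i$ in $L^2(\rho_i)$) will require some work; beyond that the argument is a standard first/second--order expansion. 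The remainder of the proof is routine once the chain-rule identity along $\rhos$ is established and \cref{assump:tangent_space} is invoked to guarantee that the perturbed curves remain in $\A$.
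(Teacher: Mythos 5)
Your proposal is correct and follows essentially the same route as the paper: the first equivalence is obtained by the fundamental theorem of calculus and chain rule along the displacement interpolant (the paper's Lemma~\ref{lem:second_order_monotonicity_v_geodesic}), and the pointwise form is derived by specializing to geodesic velocities in one direction and by the $\rho_\eps=(\id+\eps w/c)_\sharp\rho$ perturbation with division by $\eps^2$ and \cref{assump:continuity_v} in the other (the paper's Lemma~\ref{lem:second_order_monotonicity_v}). If anything, you are slightly more explicit than the paper in spelling out the converse direction of the pointwise equivalence and the $L^2$-density argument needed because $Y_i$ is defined as a closure.
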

For the proof, see \cref{sec:2nd-order-monotonicity}. 
The second reformulation of monotonicity above provides a characterization that depends on only a single element $\rho\in\A$ and a velocity field set $\{w_i\in Y_i(\rho_i,\A_i)\}_i$, instead of a pair $\rhozero,\rhoone\in\A$. This can therefore be viewed as a "local" notion of monotonicity; however, it does require sufficient regularity for the second order derivatives of the energies.

The key difference between convexity and monotonicity for both the Euclidean ($\R^d$) and measure $\P(\R^d)$ settings is that for monotonicity, the terms in the inner product are no longer gradients of a single object; they are player-specific gradients. The distance is the Euclidean distance $\norm{\cdot}$ or the $\Wbar$ distance, and the inner product is either the Euclidean inner product or the integral of the inner product weighted by a measure. For the second-order conditions in \cref{prop:monotonicity_implication}, similar to the first-order case, the gradients for monotonicity are dependent on all of the cost functions and in general cannot be written as the Hessian of a single cost function. Notably, there are two matrices for the $\P_2(\R^d)$ monotonicity setting whereas only one for the Euclidean monotone setting; this is due to the fact that the gradient of the Euclidean metric tensor is zero while the gradient of the Wasserstein-2 metric tensor is not. For more details and a summary of the relationship between monotonicity and convexity in finite and infinite dimensions, see \cref{subsec:monotonicity_convexity}.

In \cref{sec:example}, we present an example illustrating how monotonicity properties assist in satisfying the requirements for \cref{thm:cv_W2}, resulting in convergence of a coupled gradient flow system to a unique Nash equilibrium. In the next subsections, we present lemmas and examples for each of the properties (C1)-(C3).
\subsection{Example: Two Species Zero-Sum Game with Nonlinear Diffusion}\label{sec:example}
In this example, we consider the two-species setting with $F=[F_1,-F_1]$, where
    \begin{align*}
        F_1[\rho_1,\rho_2] &= \iint f(x_1,x_2)\d \rho_1(x_1)\d\rho_2(x_2) +\alpha_1 \int U_{m_1}(\rho_1) \d x_1  - \alpha_2 \int U_{m_2}(\rho_2) \d x_2 
    \end{align*}
    for some $\alpha_1,\alpha_2>0$, with $f\in C^2(\R^{d_1}\times\R^{d_2};\R)$ such that $[f,-f]$ satisfies finite-dimensional monotonicity (see \cref{def:finite_dim_monotone}) for some $\lambda>0$.    
    For a measure $\eta \in \P(\R^k)$, the nonlinear diffusion term $U_{m}(\eta):\P(\R^k) \to \{\R \cup +\infty\}$ is defined as 
    \begin{align}\label{eq:nonlinear_diffusion}
        U_{m}(\eta) = \begin{cases}
            \eta \log \eta & \text{for } m=1\,,\, \eta \ll \L^{k} \,, \\
            \frac{\eta^{m}}{m-1} & \text{for } m>1\,,\, \eta \ll \L^{k}\,,\\
            +\infty & \text{else } \,.
        \end{cases}
    \end{align}
    For $\rho\in \overline L^1 := L^1(\R^{d_1}) \times \dots \times L^1(\R^{d_n})$, the corresponding flow \eqref{eq:monotone_flow_dynamics} has velocity field $v_i[\rho]=-\nabla_{x_i} \delta_{\rho_i} F_i[\rho]
   =w_i[\rho] 
   - \alpha_i \nabla_{x_i}U_{m_i}'(\rho_i)$,
   where
    $$
     w[\rho]:=\left[-\int \nabla_{x_1} f(x_1,x_2)\d \rho_2(x_2),\int \nabla_{x_2} f(x_1,x_2) \d \rho_1(x_1)\right]\,.
    $$
    We will use (C1)-(C2) to show that the game $F$  is $\lambda$-monotone in $\overline L^1\cap\Pbar $ in the sense of \cref{def:monotonicity}. Let $\A_i=L^1(\R^{d_i}) \cap \P_2(\R^{d_i})$ and $\B_i= \P_2(\R^{d_i})$. Since $[f,-f]$ satisfies $\lambda$-monotonicity in finite dimensions (\cref{def:finite_dim_monotone}), 
    we have that $w$
    is $\lambda$-monotone in $\B=\Pbar$ by \cref{lem:monotonicity_finite_dimensions}, so 
    $$E:=\left[\iint  f(x_1,x_2) \d \rho_1(x_2) \d \rho_2(x_2),-\iint  f(x_1,x_2) \d \rho_1(x_1) \d \rho_2(x_2)\right]$$ is $\lambda$-monotone in $\B$.  
    Using that $\int U_{m_i}(\rho_i) \d x_i$ is $0$-convex in $L^1$ \cite[Section 3.1]{carrillo_kinetic_2003}, it follows from \cref{lem:monotonicity_convexity}(b) (C1, monotonicity vs convexity) that $\tilde E$ with $\tilde E_i:=\alpha_i \int U_{m_i}(\rho_i) \d x_i$
    is $0$-monotone in $\A$.
    Then applying \cref{lem:additivity} (C2, additivity), we conclude that $F=E+\tilde E$ is $\lambda$-monotone in $\A=\Pbar \cap \overline L^1$. 

    To illustrate that the theoretical results on second moment control as well as existence of and convergence to a unique Nash equilibrium apply to this example, we assume from now on that \cref{assump:loc_abs_cont_curves} holds. 
    First, we show that the second moments of solutions to \eqref{eq:coupled_gradient_flow_dynamics} defined by the game $F$ are uniformly bounded. 
   Assuming $\rho(t)\in \A=\overline L^1 \cap \Pbar$ for all $t\ge0$, \cref{lem:bdd_second_moment} results in a uniform bound on the second moments.

  Next, we establish that \cref{assump:F_lsc} holds. The  finite-dimensional monotonicity of $[f,-f]$ with $\lambda>0$ implies $f(\cdot,x_2)$ is strongly convex for any $x_2 \in \R^{d_2}$ and $f(x_1,\cdot)$ is strongly concave for any $x_1\in\R^{d_1}$; in combination with $f\in C^2$, we obtain $\int f(\cdot,x_2)\d\rho_2(x_2)\ge c_2$, $\int f(x_1,\cdot)\d\rho_1(x_1)\le c_1$ where $c_1,c_2$ depend on $\rho_1,\rho_2$ respectively. Together with continuity of these terms in $x_1$ and $x_2$ respectively, we have lower semicontinuity of $\iint f(x_1, x_2)\d\rho_1(x_1) \d\rho_2(x_2)$  with respect to $\rho_1$ for any fixed $\rho_2$ and upper semicontinuity  of $\iint f(x_1, x_2)\d\rho_1(x_1) \d\rho_2(x_2)$ 
    with respect to  $\rho_2$ for any fixed $\rho_1$, due to \cite[Proposition 7.1]{santambrogio_OTAM}. 
    For the diffusion term, if $m_i>1$, the $L^{m_i}$ norm is lower semicontinuous \cite[Proposition 7.7, Remark 7.8]{santambrogio_OTAM}. 
    For the case $m_i=1$, lower semicontinuity of the diffusion term follows from the uniform second moment bound and \cite[Lemma C.4]{conger_coupled_2024}. Therefore \cref{assump:F_lsc} holds.

  Using \cref{assump:loc_abs_cont_curves} and applying \cref{thm:cv_W2}(a), there exists a unique steady state in $\rho^\infty \in S$. In \cite{conger_coupled_2024} it was shown that $\rho^\infty\in \overline L^1 \cap \Pbar$ as long as $\alpha_1,\alpha_2>0$ in the case of linear diffusion, and the result for $m_i>1$ follows in the same way by considering the Euler-Lagrange condition
    \begin{align*}
 \frac{ \alpha_i m_i}{m_i - 1} \rho_i^{m_i-1}(x_i) + (-1)^{i+1}\int f(x_1,x_2) \d \rho_{-i}(x_{-i})=c_i \quad \text{ on } \supp\rho_i\,.
  \end{align*}
  Then, since $\rho^\infty\in \A \cap S$, 
 by \cref{thm:cv_W2}(b), we have exponential convergence in $\Wbar$ with rate $\lambda$. And moreover, thanks to \cref{assump:F_lsc},  we can apply \cref{thm:cv_W2}(c) to conclude that $\rho^\infty$ is the unique Nash equilibrium.

\subsection{Monotonicity Implies Convexity}\label{subsec:monotonicity_implies_convexity}
Here we state the lemma which describes property (C1). While monotonicity implies convexity, the converse is not true, as illustrated in the example in this section.
\begin{lemma}[Monotonicity and Convexity]\label{lem:monotonicity_convexity}
The following statements detail the relationship between monotonicity and convexity.
\begin{enumerate}
    \item[(a)] \emph{Monotonicity implies convexity.} Let $F:\A \to \R^n$ be $\lambda$-monotone in $\A$. Then $F_i[\cdot,\rhoni]$ is $\lambda$-displacement convex for all fixed $\rhoni\in\A_{-i}$, for all $\iin$.
    \item[(b)] \emph{Uncoupled convexity implies monotonicity.} Let $E_i:\A_i \to \R$ be a $\lambda_i$-displacement convex functional for all $\rho_i\in \A_i$, for all $\iin$. Then the functional $E=[E_1, \dots, E_n]:\A \to \R^n$ is $\lambda$-monotone in $\A$, where $\lambda\coloneqq\min_i \lambda_i$.
\end{enumerate}
\end{lemma}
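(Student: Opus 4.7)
The plan is to exploit the product-like structure of the monotonicity inequality so that each direction reduces to a single-species statement. For (a), I would test monotonicity against a pair of joint measures differing only in the $i$-th coordinate, which collapses the sum in the monotonicity inequality to a first-order displacement convexity statement for $F_i[\cdot,\rhoni]$. For (b), since each $E_i$ depends only on its own $\rho_i$, the monotonicity inequality decouples across species and follows by summing the individual displacement convexity inequalities.

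\textbf{Part (a).} Fix $i\in\{1,\dots,n\}$, a configuration $\rhoni\in\A_{-i}$, and two candidate measures $\rhozero_i,\rhoone_i\in\A_i$. First I would form the joint measures $\rhozero=(\rhozero_i,\rhoni)$ and $\rhoone=(\rhoone_i,\rhoni)$, both of which lie in $\A$ under the product structure assumed in the paper's setting (as in \cref{sec:example}). Next, I would build a plan $\gamma\in\Gamma^*(\rhozero,\rhoone)$ component-wise: for $j\ne i$ take $\gamma_j=(\id\times\id)_\sharp\rho_j$, which is optimal since the two marginals coincide; for the $i$-th component pick any $\gamma_i\in\Gamma_i^*(\rhozero_i,\rhoone_i)$. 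Plugging this choice into the monotonicity inequality \eqref{eq:monotonicity_condition_F}, every term with $j\ne i$ vanishes because $x_j=y_j$ on the support of $\gamma_j$, and similarly $\Wass_2(\rhozero_j,\rhoone_j)=0$. What remains is precisely
\begin{align*}
\int\<x_i-y_i,\nabla_{x_i}\delta_{\rho_i}F_i[\rhozero_i,\rhoni](x_i)-\nabla_{y_i}\delta_{\rho_i}F_i[\rhoone_i,\rhoni](y_i)>\d\gamma_i(x_i,y_i)\ge\lambda\,\Wass_2(\rhozero_i,\rhoone_i)^2,
\end{align*}
which is the first-order condition \eqref{eq:first_order_convexity} for $\lambda$-displacement convexity of $F_i[\cdot,\rhoni]$.

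\textbf{Part (b).} Because each $E_i$ depends only on $\rho_i$, the terms in the monotonicity inequality decouple across species. For each $i$, the first-order displacement convexity of $E_i$ with constant $\lambda_i$ gives, for any $\gamma_i\in\Gamma_i^*(\rhozero_i,\rhoone_i)$,
\begin{align*}
\int\<x_i-y_i,\nabla_{x_i}\delta_{\rho_i}E_i[\rhozero_i](x_i)-\nabla_{y_i}\delta_{\rho_i}E_i[\rhoone_i](y_i)>\d\gamma_i(x_i,y_i)\ge\lambda_i\,\Wass_2(\rhozero_i,\rhoone_i)^2.
\end{align*}
Summing over $i$ and bounding $\lambda_i\ge\lambda:=\min_j\lambda_j$ yields exactly \eqref{eq:monotonicity_condition_F} with constant $\lambda$, proving $\lambda$-monotonicity of $E$.

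\textbf{Main obstacle.} I expect no deep difficulty. The only subtle point is the underlying assumption in (a) that the joint measures $(\rhozero_i,\rhoni)$ and $(\rhoone_i,\rhoni)$ lie in $\A$. This is automatic when $\A$ has product structure $\A=\A_1\times\cdots\times\A_n$ (the case for every example in the paper); for more exotic geodesically convex $\A$ one would read the statement on the slice $\{\rho_i\in\A_i:(\rho_i,\rhoni)\in\A\}$ and the same argument applies verbatim.
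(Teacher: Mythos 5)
Your proposal is correct and follows essentially the same route as the paper's proof: part (a) fixes all but the $i$-th species (so the off-diagonal terms in the monotonicity inequality vanish under the diagonal coupling) and reads off the first-order convexity condition, while part (b) sums the per-species first-order convexity inequalities and bounds by $\min_i\lambda_i$. Your explicit remark about ensuring the perturbed joint measure stays in $\A$ is a point the paper glosses over, but it does not change the argument.
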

\begin{proof}[Proof of \cref{lem:monotonicity_convexity}]
    We begin with the proof of $(a)$.     To prove that monotonicity implies convexity, select any $\pi\in\A$. Selecting a particular value of $i$, set $\rhozero_j=\rhoone_j=\pi_j$ for all $j\ne i$. Let $\rhozero_i=\pi_i$ and $\rhoone_i$ satisfy $\Wass_2(\rhozero_i,\rhoone_i)^2 >0$. First-order monotonicity results in 
    \begin{align*}
        \int \<x_i-y_i, \nabla_{x_i}\delta_{\rho_{i}}F_i[\rhozero](x_i) -\nabla_{x_i}\delta_{\rho_{i}}F_i[\rhoone](y_i)> \d \gamma_i(x_i,y_i) \ge \lambda \Wass_2(\rhozero_i,\rhoone_i)^2\,,
    \end{align*}
    which is a sufficient condition for $\lambda$-displacement convexity in $\A$. See \cite[Proposition 16.2]{Villani07} for details on convexity inequalities.
    
    Next we prove (b). Since each $E_i$ is $\lambda_i$-displacement convex for all $\rho_i\in \A_i$, for all $\iin$ it holds that
    \begin{align*}
        \int \<x_i-y_i , \nabla_{x_i} \delta_{\rho_i} E_i[\rhozero_i](x_i) - \nabla_{x_i} \delta_{\rho_i} E_i[\rhoone_i](y_i)> \d \gamma_i(x_i,y_i) \ge \lambda_i \Wass_2(\rhozero_i,\rhoone_i)^2 \,,
    \end{align*}
    for all $\rhozero,\rhoone \in\A$ and $\gamma_i\in\Gamma^*(\rhozero_i,\rhoone_i)$. Since there is no coupling among species, summing over all $i$ results in
    \begin{align*}
        \int \<\ndr E[\rhozero](x) - \ndr E[\rhoone](y),x-y > \d \gamma(x,y) \ge \lambda \Wbar(\rhozero,\rhoone)^2\,,
    \end{align*}
    where $\lambda=\min_i \lambda_i$. The above is the monotonicity condition.
\end{proof}

In the following example, we illustrate a setting in which $F_i[\cdot,\rhoni]$ is displacement convex for all $i$, but $F$ is not monotone due to the nature of the coupling between the two species, showing why monotonicity is a stronger condition than convexity.
\begin{example}[Convexity Does Not Imply Monotonicity]\label{ex:convexity_neq_monotonicity}
    Consider a system $F:\Pbar\to \R^2$ given by
    \begin{align*}
        F_1[\rho_1,\rho_2] &= \frac{\lambda}{2}\int \norm{x_1}^2\d \rho_1(x_1)+ \iint x_1^\top A_1 x_2 \d \rho_1(x_1) \d \rho_2(x_2)\,, \\
        F_2[\rho_1,\rho_2]&=\frac{\lambda}{2}\int \norm{x_2}^2 \d \rho_2(x_2)+ \iint x_1^\top A_2 x_2 \d \rho_1(x_1) \d \rho_2(x_2) \,,
    \end{align*}
    where $A_1,A_2\in \R^{d_1 \times d_2}$ and $\lambda>0$. The second derivative of $F_i$ along geodesics between $\rhozero,\rhoone\in\Pbar$ is
    \begin{align*}
        \ddss F_i[\rhos_i,\rhoni] = \lambda \int \norm{\vs_i(x_i)}^2 \d \rhos_i(x_i) = \lambda W_2(\rhozero_i,\rhoone_i)^2\,, \ \forall\, i\in\{1,2\}\,, 
    \end{align*}
   where $(\rhos_i,\vs_i)$ solve \eqref{eq:geodesic_eqns}, and therefore $F_1[\cdot,\rho_2]$ and $F_2[\rho_1,\cdot]$ are $\lambda$-displacement convex. The first-order monotonicity condition, where $A=\begin{bmatrix}
       0 & A_1 \\ A_2^\top & 0
   \end{bmatrix}$, is
   \begin{align*}
   &\int \<x-y, \ndr F[\rhozero](x)-\ndr F[\rhoone](y) > \d \gamma(x,y) \\
   &=
       \int \<x-y, \lambda (x-y) + \int A (\hat x-\hat y)\d \rhozero(\hat x) \d \rhoone(\hat y) > \d \gamma(x,y) \\
       &= \lambda \int \norm{x-y}^2 \d \gamma(x,y) + \E(x-y)^\top A \E(x-y)\,,
   \end{align*}
   where $\E(x-y)=\int (x-y) \d \rhozero(x)\d\rhoone(y)$. Then for choices of $A_1,A_2$ such that $A+A^\top$ has one or more negative eigenvalues, $F$ does not satisfy the monotonicity condition. 
    However, selecting $A_1=-A_2$ results in $\lambda$-monotonicity in $\Pbar$, since $A+A^\top=0$.
\end{example}

\subsection{Additivity of Monotonicity}\label{subsec:additivity}
Here we state the lemma which describes property (C2). This property means that the sum of two monotone systems is monotone, with the monotonicity coefficient at least the sum of the original coefficients.

\begin{lemma}[Additivity of Monotonicity]\label{lem:additivity}
        Let $E=[E_1,\dots,E_n]:\A\to\R^n$ be $\lambda$-mononotone in $\A$  and $F=[F_1,\dots,F_n]:\B\to\R^n$  be $\tilde \lambda$-monotone in $\B$, and define $\hat F=[\hat F_1, \dots,\hat F_n]:\A \cap\B \to \R^n$ where $\hat F_i = \alpha E_i + \tilde \alpha F_i$, for some $\alpha,\tilde \alpha \in\R$. Then $\hat F$ is $ \alpha\lambda+\tilde \alpha \tilde\lambda$-monotone in $\A\cap \B$.  
\end{lemma}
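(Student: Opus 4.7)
The plan is to exploit the linearity of both the first variation operator $\delta_{\rho_i}$ and the gradient $\nabla_{x_i}$, and then sum the two monotonicity inequalities defining $E$ and $F$. The intermediate domain consideration is simply that restricting to $\A\cap\B$ makes both inequalities simultaneously applicable; geodesic convexity of $\A\cap\B$ follows from geodesic convexity of $\A$ and $\B$ (or one can read the conclusion as holding on any geodesically convex subset of $\A\cap\B$, which is what is needed to invoke \cref{def:monotonicity}).

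First I would fix arbitrary $\rhozero,\rhoone\in\A\cap\B$ and $\gamma\in\Gamma^*(\rhozero,\rhoone)$. By linearity of $\delta_{\rho_i}$ and $\nabla_{x_i}$, for each $\iin$ and each $\rho\in\A\cap\B$,
\begin{align*}
\nabla_{x_i}\delta_{\rho_i}\hat F_i[\rho](x_i) = \alpha\,\nabla_{x_i}\delta_{\rho_i}E_i[\rho](x_i) + \tilde\alpha\,\nabla_{x_i}\delta_{\rho_i}F_i[\rho](x_i)\,,
\end{align*}
so that $\ndr\hat F[\rho](x) = \alpha\,\ndr E[\rho](x) + \tilde\alpha\,\ndr F[\rho](x)$ in the notation of \cref{sec:intro}.

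Next I would plug this identity into the left-hand side of the monotonicity inequality \eqref{eq:monotonicity_condition_F} applied to $\hat F$. Linearity of the Euclidean inner product and of the integral against $\gamma$ splits the expression into two pieces:
\begin{align*}
&\int\<x-y,\ndr\hat F[\rhozero](x)-\ndr\hat F[\rhoone](y)>\d\gamma(x,y)\\
&\quad = \alpha\int\<x-y,\ndr E[\rhozero](x)-\ndr E[\rhoone](y)>\d\gamma(x,y) + \tilde\alpha\int\<x-y,\ndr F[\rhozero](x)-\ndr F[\rhoone](y)>\d\gamma(x,y)\,.
\end{align*}
Applying the $\lambda$-monotonicity of $E$ in $\A$ and the $\tilde\lambda$-monotonicity of $F$ in $\B$ separately to the two integrals (valid since $\rhozero,\rhoone\in\A\cap\B$ and $\gamma$ is an optimal plan between their marginals), and using that $\alpha,\tilde\alpha\ge 0$ so the inequalities are preserved, one obtains
\begin{align*}
\int\<x-y,\ndr\hat F[\rhozero](x)-\ndr\hat F[\rhoone](y)>\d\gamma(x,y)\ge (\alpha\lambda+\tilde\alpha\tilde\lambda)\,\Wbar(\rhozero,\rhoone)^2\,,
\end{align*}
which is precisely the $(\alpha\lambda+\tilde\alpha\tilde\lambda)$-monotonicity of $\hat F$ in $\A\cap\B$.

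There is really no hard step here; the whole argument is linearity plus a sign remark. The only subtlety worth flagging is the requirement $\alpha,\tilde\alpha\ge 0$: if either scalar is negative, multiplying the corresponding monotonicity inequality reverses its direction, so the proof as written fails, and indeed all instances of the lemma used in the paper (e.g.\ the combination of convection and nonlinear diffusion in \cref{sec:example}) take nonnegative coefficients. I would therefore either state the lemma with this positivity assumption or add a single line noting that the conclusion holds as stated for $\alpha,\tilde\alpha\ge 0$.
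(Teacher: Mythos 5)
Your proposal is correct and is exactly the paper's argument: the paper's own proof is the single line that the claim ``follows by the linearity of the monotonicity condition in terms of the energy functionals,'' which you have simply written out in full. Your caveat about requiring $\alpha,\tilde\alpha\ge 0$ is a valid observation that the paper overlooks -- as stated with $\alpha,\tilde\alpha\in\R$ the lemma fails for negative coefficients, since multiplying the monotonicity inequality by a negative scalar reverses it, and every use of the lemma in the paper does in fact take nonnegative coefficients.
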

\begin{proof}
    The proof follows by the linearity of the monotonicity condition in terms of the energy functionals.
\end{proof}
The result from \cref{lem:monotonicity_convexity}(b) shows that displacement convexity directly implies monotonicity when there is no coupling among different species. \cref{lem:additivity} is useful for computing monotonicity of more intricate functionals because one can verify the monotonicity of only the inter-species terms to ensure overall monotonicity. For terms such as $\int \rho_i \log \rho_i$ which do not have any multispecies coupling and for which displacement convexity has been shown, one can instead check the monotonicity of $F$ without these terms, and apply the previous lemmas, which allow for the addition of displacement convex terms with dependence on only one species to be added to monotone $F$, and the resulting game is monotone with an updated parameter.

\begin{example}[Pairwise Zero-Sum Game with Nonlinear Diffusion: Application of \cref{lem:additivity}]\label{ex:monotoncity_nonlinear_diffusion}
        In this example, let $\A_i=L^{m_i}(\R^{d_i})\cap \P_2(\R^{d_i})$ with $m_i\ge 1$. Let $\hat F_i:\A\to\R$ take the form
    \begin{align*}
        \hat F_i[\rho]&= \sum_{j\ne i} \iint f_{ij}(x_i,x_j) \d \rho_i(x_i) \d \rho_j(x_j) + \int V_i(x_i) \d \rho_i(x_i)  + \alpha_i\int U_{m_i}(\rho_i) \d x_i 
    \end{align*}
    for $\alpha_i\ge 0$.
Monotonicity for $\hat F$ is not defined in $\Pbar$ because of the nonlinear diffusion terms, for which the left-hand side of \eqref{eq:monotonicity_condition_F} is not defined on Diracs, but we will show that $\hat F$ is monotone in $\A\subset\Pbar$. 
\sloppy Let $f_{ij}\in C^2(\R^{d_i}\times\R^{d_j},\R)$ satisfy $-f_{ij}(x_i,x_j)=f_{ji}(x_j,x_i)$, $\tilde\lambda_i\Id_{d_i} \preceq \sum_{j\neq i} \nabla_{x_i}^2 f_{ij}(x_i,x_j) $ for all $x_i\in\R^{d_i},x_j\in\R^{d_j}$, and $\int \sum_{j\neq i} \nabla_{x_i}^2 f_{ij}(x_i,x_j) \d \rhoni(x_{-i})\in \R^{d_i \times d_i}$ for all $\rhoni \in \A_{-i}$ and $x_i\in \R^{d_i}$. Let $V_i\in C^2 (\R^{d_i},\R)$ satisfy $\nabla_{x_i}^2 V_i(x_i) \succeq \lambda_i \Id_{d_i}$ for all $x_i\in \R^{d_i}$.
Consider the set of functionals
\begin{align*}
    F_i[\rho]&= \sum_{j\ne i} \iint f_{ij}(x_i,x_j) \d \rho_i(x_i) \d \rho_j(x_j) \,.
\end{align*}
To check the reformulated monotonicity condition in \cref{prop:monotonicity_implication}, we compute the Hessian terms introduced in \cref{def:matrix-ops}: 
\begin{align*}
  (\nndr F[\rho](x))_{ii} &= \sum_{j\ne i} \int \nabla_{x_i}^2 f_{ij}(x_i,x_j) \d \rho_j(x_j) = \int \sum_{j\ne i} \nabla_{x_i}^2 f_{ij}(x_i,x_j) \d \rhoni(x_{-i}) \succeq \tilde \lambda_i\Id_{d_i} \,,\\
    (\nnddr F[\rho](x,\hat x))_{ij} &= \begin{cases}
        0 & \text{if } i=j\,, \\
        \nabla_{ x_i \hat x_j}^2 f_{ij}( x_i,\hat x_j) & \text{if } i \neq j\,. 
    \end{cases}
\end{align*}
Using a change of notation and $-f_{ij}(x_i,x_j)=f_{ji}(x_j,x_i)$,
\begin{align*}
    &\int_0^1\iint \<x-y,\nnddr F[\rhos](I^{(s)}(x,y),I^{(s)}(\hat x,\hat y)) \cdot (\hat x - \hat y)> \d \gamma(x,y) \d \gamma(\hat x,\hat y) \d s  
    \\
    & =  
    \frac{1}{2} \int_0^1  \iint \big \langle x-y,\left[(\nnddr F[\rhos](I^{(s)}(x,y),I^{(s)}(\hat x,\hat y))
     + \nnddr F[\rho](I^{(s)}(\hat x,\hat y),I^{(s)}(x,y))^\top )\right]\\
    & \hspace{10cm}\cdot (\hat x-\hat y) \big \rangle \d \gamma(x,y) \d \gamma(\hat x,\hat y) \d s \\&  = 0\,.
\end{align*}
The matrices $J_1$ and $J_2$, given by 
\begin{align*}
    (J_1[\rho](x))_{ii}= -(\nndr  F[\rho](x))_{ii} &=  - \int \sum_{j\ne i} \nabla_{x_i}^2 f_{ij}(x_i,x_j) \d \rho_{-i}(x_{-i}) \,, \\
    (J_2[\rho](x,\hat x))_{ij} =-(\nnddr  F[\rho](x,\hat x))_{ij}&= \begin{cases}
        0 & \text{if } i=j\,, \\
        -\nabla_{ x_i \hat x_j}^2 f_{ij}( x_i,\hat x_j) & \text{if } i \neq j\,,
    \end{cases}
\end{align*} 
are well-defined $\rho$-a.e. due to $f_{ij}\in C^2$ and $ \left|\int \sum_{j\ne i} \nabla_{x_i}^2 f_{ij}(x_i,x_j) \d \rho_{-i}(x_{-i})\right|<\infty$, and so \cref{assump:regularity_v} holds. 
 Applying \cref{prop:monotonicity_implication} 
 we conclude that $F$ is $\tilde \lambda $-monotone in $\Pbar$ with $\tilde \lambda :=\min_{i}\tilde \lambda_i $. Further, let $\lambda=\min_{i}\lambda_i$. We can apply \cref{lem:additivity} for each player, using that $\int U_{m_i}(\rho_i) \d x_i$ is $0$-displacement convex \cite{carrillo_kinetic_2003} over $L^{m_i}(\R^{d_i})$ and $\int V_i(x_i) \d \rho_i(x_i)$ is $\lambda_i$-displacement convex because $\nabla_{x_i}^2 V_i\succeq \lambda_i\Id_{d_i}$, to obtain that $\hat F$ is $(\lambda+\tilde \lambda)$-monotone in $\A$, where $\lambda=\min_i \lambda_i$.
Hence, \cref{lem:additivity} provides a method for using knowledge on the displacement-convexity of additive functionals to show monotonicity\footnote{A potentially larger monotonicity coefficient $\min_i(\lambda_i +\tilde \lambda_i)$ can be obtained by including $V_i$ in $F_i$ instead of considering it separately via \cref{lem:additivity}.}.

\end{example}

\subsection{Monotonicity of Cross-Interaction Kernels}\label{subsec:monotonicity_cross_interaction}
Next, we investigate monotonicity for a specific type of commonly-used functionals. In the PDE literature for biological models, interaction kernels are commonly employed to capture phenomena of interacting species. Our monotonicity condition provides convergence guarantees for systems driven by interaction kernels that have lower-bounded Hessians. To illustrate this, we draw a connection to the finite-dimensional contractivity literature; \cite[Theorem 41]{Davydov_Bullo_Contraction} proves that when the convexity of cost functions dominate inter-species effects, the overall system is contractive. We extend this results the infinite-dimensional setting in the following lemma.
\begin{lemma}[Cross-Interaction Kernels]\label{lem:interaction_kernels}
 Consider the interaction term $\W_{ij}:\A_{i} \times \A_{j} \to \R$, parameterized by the interaction kernel $W_{ij}\in C^2(\R^{d_{i}}\times \R^{d_{j}},\R)$, given by
    \begin{align*}
        \W_{ij}(\rho_{i},\rho_{j}) = \iint W_{ij}(x_{i}, x_{j}) \d\rho_{i}(x_{i}) \d\rho_{j}(x_{j})\,.
    \end{align*}
    Let $F_i=\sum_{j\ne i} \W_{ij}$ for all $\iin$. If for all $i\ne j$, there exist $c_i\in \R$ and $\alpha_{ij}\ge0$ such that for all $x_j,y_j\in\R^{d_j}\,,\, x_i\in\R^{d_i}$,
    \begin{align*}
        \norm{\nabla_{x_i}W_{ij}(x_i,x_j) - \nabla_{x_i}W_{ij}(x_i,y_j)} \le \alpha_{ij} \norm{x_j-y_j}\,, \quad  
        \sum_{j\ne i} \nabla_{x_i}^2 W_{ij}(x_i,x_j) \succeq c_i\Id_{d_i}\,,
    \end{align*}
   and the matrix 
    \begin{align*}
        \Lambda := \begin{bmatrix}
            c_1 & -\alpha_{12} & \dots & -\alpha_{1n} \\
            -\alpha_{21} & c_2 & & -\alpha_{2n} \\
            \vdots &  & \ddots &  \vdots \\
            -\alpha_{n1} & -\alpha_{n2} & \dots & c_n
        \end{bmatrix}\,,
    \end{align*}
    satisfies $\frac{1}{2}(\Lambda + \Lambda^\top) \succeq \lambda \Id_{d}$, then $F$ is $\lambda$-monotone in $\A$.
\end{lemma}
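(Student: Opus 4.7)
The plan is to verify first-order monotonicity \eqref{eq:monotonicity_condition_F} directly. Since $\nabla_{x_i}\delta_{\rho_i}F_i[\rho](x_i)=\sum_{j\ne i}\int\nabla_{x_i}W_{ij}(x_i,x_j)\d\rho_j(x_j)$, and any single-marginal integral $\int f(x_j)\d\rhozero_j$ can be replaced by an integral against the optimal coupling $\gamma_j\in\Gamma^*(\rhozero_j,\rhoone_j)$, the left-hand side of \eqref{eq:monotonicity_condition_F} rewrites as
\begin{align*}
\sum_i\sum_{j\ne i}\iint\left\langle x_i-y_i,\nabla_{x_i}W_{ij}(x_i,x_j)-\nabla_{x_i}W_{ij}(y_i,y_j)\right\rangle\d\gamma_i(x_i,y_i)\d\gamma_j(x_j,y_j)\,.
\end{align*}

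I would then apply the telescoping decomposition
\begin{align*}
\nabla_{x_i}W_{ij}(x_i,x_j)-\nabla_{x_i}W_{ij}(y_i,y_j)=\bigl[\nabla_{x_i}W_{ij}(x_i,x_j)-\nabla_{x_i}W_{ij}(y_i,x_j)\bigr]+\bigl[\nabla_{x_i}W_{ij}(y_i,x_j)-\nabla_{x_i}W_{ij}(y_i,y_j)\bigr]\,.
\end{align*}
For the first bracket, the fundamental theorem of calculus yields $\int_0^1\nabla_{x_i}^2W_{ij}(sx_i+(1-s)y_i,x_j)(x_i-y_i)\d s$; crucially, summing over $j\ne i$ \emph{inside} the integrand and invoking the pointwise bound $\sum_{j\ne i}\nabla_{x_i}^2W_{ij}(\cdot,x_j)\succeq c_i\Id_{d_i}$ produces a contribution of at least $c_i\Wass_2(\rhozero_i,\rhoone_i)^2$. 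For the second bracket, the Lipschitz hypothesis bounds its norm by $\alpha_{ij}\|x_j-y_j\|$, and Cauchy–Schwarz against the product measure $\gamma_i\otimes\gamma_j$ controls its contribution in absolute value by $\alpha_{ij}\Wass_2(\rhozero_i,\rhoone_i)\Wass_2(\rhozero_j,\rhoone_j)$.

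Setting $a_i:=\Wass_2(\rhozero_i,\rhoone_i)$ and assembling, the monotonicity integral is bounded below by $\sum_i c_ia_i^2-\sum_{i\ne j}\alpha_{ij}a_ia_j=a^\top\Lambda a=a^\top\tfrac12(\Lambda+\Lambda^\top)a\ge\lambda\|a\|^2=\lambda\Wbar(\rhozero,\rhoone)^2$ by the hypothesis on the symmetric part of $\Lambda$. This is exactly \eqref{eq:monotonicity_condition_F}, so $F$ is $\lambda$-monotone on $\A$. The main obstacle is the bookkeeping: one must sum over $j\ne i$ \emph{inside} the Hessian term to activate the joint lower bound (the individual $\nabla_{x_i}^2W_{ij}$ are not assumed to be bounded below), and track signs carefully in the Cauchy–Schwarz step so that the off-diagonal entries enter $\Lambda$ with the correct minus sign. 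An essentially equivalent route is to invoke the second-order reformulation in \cref{prop:monotonicity_implication} with $(\nndr F[\rho])_{ii}=\sum_{j\ne i}\int\nabla_{x_i}^2W_{ij}\d\rho_j$ and $(\nnddr F[\rho](x,\hat x))_{ij}=\nabla_{x_i\hat x_j}^2W_{ij}(x_i,\hat x_j)$ for $i\ne j$; applying Cauchy–Schwarz to the resulting double integral in $w$ produces the same quadratic form in $\Lambda$.
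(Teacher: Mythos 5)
Your argument is correct and follows essentially the same route as the paper's proof: telescope the difference of gradients, control one bracket via the summed Hessian lower bound and the other via the Lipschitz hypothesis plus Cauchy--Schwarz, and finish with the quadratic form in $\Lambda$. The only (cosmetic) differences are that you telescope in the opposite order, apply the $\tfrac12(\Lambda+\Lambda^\top)\succeq\lambda\Id$ bound to the vector of Wasserstein distances after integrating rather than pointwise to $z_i=\norm{x_i-y_i}$ before integrating, and use the integral form of the fundamental theorem of calculus where the paper invokes a mean-value point $\tilde x_i$ --- if anything your version is the more careful one for vector-valued gradients.
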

\begin{proof}
     The monotonicity condition to check is: for all $\rhozero, \rhoone\in \A$ and for all $\gamma\in\Gamma^*(\rhozero,\rhoone)$,
    \begin{align*}
        \sum_{i,j\ne i}\int \<\int \nabla_{x_i}W_{ij}(x_i,x_j)\d\rhozero_j(x_j) - \int \nabla_{x_i}W_{ij}(y_i,y_j)\d \rhoone_j(y_j),x_i-y_i> \d \gamma_i(x_i,y_i) 
        \ge \lambda \int \norm{x-y}^2 \d\gamma(x,y)\,.
    \end{align*}
   We use the optimal plans to write the inner product as 
    \begin{align*}
        \sum_{i,j\ne i}\int \<\int \nabla_{x_i}W_{ij}(x_i,x_j)\d\rhozero_j(x_j) - \int \nabla_{x_i}W_{ij}(y_i,y_j)\d \rhoone_j(y_j),x_i-y_i> \d \gamma_i(x_i,y_i) \\
        = \int \sum_{i,j\ne i} \< \nabla_{x_i}W_{ij}(x_i,x_j)- \nabla_{x_i}W_{ij}(y_i,y_j),x_i-y_i> \d \gamma(x,y)\,,
    \end{align*}
  and apply the following exact Taylor expansion
    \begin{align*}
        \nabla_{x_i} W_{ij}(y_i,y_j) = \nabla_{x_i} W_{ij}(x_i,y_j) + (y_i-x_i)^\top \nabla_{x_i}^2 W_{ij}(\tilde x_i,y_j)
    \end{align*}
    for some $\tilde x_i\in\R^{d_i}$. 
    Then the integrand satisfies
    \begin{align*}
        \sum_{i,j\ne i} \< \nabla_{x_i}W_{ij}(x_i,x_j)- \nabla_{x_i}W_{ij}(y_i,y_j),x_i-y_i>  &=  \sum_i (x_i-y_i)^\top \bigg(\sum_{j\ne i}\nabla^2_{ij} W_{ij}(\tilde x_i,y_j)\bigg)  (x_i-y_i)\\
      & \quad
      + \sum_{i,j\ne i}\< \nabla_{x_i}W_{ij}(x_i,x_j)- \nabla_{x_i} W_{ij}(x_i,y_j),x_i-y_i> \,.
    \end{align*}
    Applying $\sum_{j\ne i} \nabla_{x_i}^2 W_{ij}(\tilde x_i,y_j) \succeq c_i\Id_{d_i}$ for the first term and  the Cauchy-Schwarz inequality plus the Lipschitz condition for the second term, we have
    \begin{align}\label{eq:cross_interaction_ineq}
    \begin{split}
        &\sum_{i,j\ne i}  \< \nabla_{x_i}W_{ij}(x_i,x_j)- \nabla_{x_i}W_{ij}(y_i,y_j),x_i-y_i> \\
        &\qquad \ge \sum_i  c_i \norm{x_i-y_i}^2 -\sum_{i,j\ne i} \alpha_{ij} \norm{x_j-y_j} \norm{x_i-y_i} =z^\top \Lambda z = \frac{1}{2}z^\top (\Lambda+\Lambda^\top) z
       \,,
    \end{split}
    \end{align}
    where $z=z(x,y)\in\R^n$, $z_i:=\norm{x_i-y_i}$, and $\Lambda$ is defined in the lemma statement.  Plugging \eqref{eq:cross_interaction_ineq} into the monotonicity condition, we conclude
    \begin{align*}
        \int \sum_{i,j\ne i} \< \nabla_{x_i}W_{ij}(x_i,x_j)- \nabla_{x_i}W_{ij}(y_i,y_j),x_i-y_i> \d \gamma(x,y) 
        &\ge \frac{1}{2} \int z(x,y)^\top (\Lambda+\Lambda^\top) z(x,y) \d \gamma(x,y)\\
        &\ge \lambda \int \norm{x-y}^2 \d \gamma(x,y)\,.
    \end{align*}
\end{proof}
\begin{remark}
    \sloppy The previous result can be adjusted to include nonlocal self-interaction terms $\iint W_i(x_i,\tilde x_i) \d \rho_i(x_i) \d \rho_i(\tilde x_i)$, using existing displacement convexity properties and the additivity property (\cref{lem:additivity}). Moreover, if $-\Lambda$ is Hurwitz (i.e. all its eigenvalues have strictly negative real parts), even if $\Lambda + \Lambda^\top \nsucceq 0$, we expect contraction in a reweighted joint $\Wass_2$ metric following the approach in \cite[Thm 41]{Davydov_Bullo_Contraction}. 
\end{remark}\cref{lem:interaction_kernels} shows that if the interaction kernels are strongly convex with large convexity coefficients $c_i$ and small Lipschitz constants $\alpha_{ij}$, which encodes how strongly the other species influence species $i$, then the overall system is monotone.
For more general conditions that ensure monotonicity of $F$, see \cref{lem:Cross-Interaction-Kernels_appendix}.

\begin{example}\label{ex:interaction_kernels}
    Examples of kernels with lower-bounded Hessians include the quadratic and Morse-like attractive-repulsive kernels, which are radially symmetric. In the setting where $\R^{d_i}=\R^{d_j}$, let $x=x_i-x_j$ and $W_{ij}(x_i,x_j)=W(x_i-x_j)=W(x)$ for any $i,j$; some commonly-used kernels are
\begin{align*}
    W^{(k)}(x) &= \frac{\norm{x}^k}{k}\,, \quad  W^{(p)}(x) = \frac{\norm{x}^a}{a}-\frac{\norm{x}^b}{b}\,, \\ 
    W^{(m)}(x) &= C_r \exp\left(-\frac{\norm{x}^2}{l_r}\right) - C_a \exp\left(-\frac{\norm{x}^2}{l_a}\right)\,,
\end{align*}
where $k>1-d_i$, $l_a>l_r>0$, $C_r>C_a>0$ and $a>b>\max (2-d_i,2)$ with $a,b$ even integers so that the second derivative is well-defined (see \cite{balague_confinement_2012,craig_convergence_2016} and references therein). The kernel $W^{(k)}$ is purely attractive for any power $k$, and satisfies the Lipschitz condition in \cref{lem:interaction_kernels} for $k=2$.
The kernel $W^{(m)}$ is a Morse-like attractive-repulsive potential which is repulsive for small $x$ and attractive for large $x$; this kernel also satisfies the Lipschitz condition. The power law potential $W^{(p)}$ is attractive-repulsive; as it does not satisfy the Lipschitz condition in \cref{lem:interaction_kernels}, monotonicity is an open question, and may be tackled making use of the local Lipschitzness of this kernel.
See the computations in \cref{sec:interaction_kernel} for details on the Hessian lower bounds of these kernels, and see \cite{carrillo_measure_2020} for more details and references regarding these particular kernels. When the relevant conditions are satisfied, \cref{lem:interaction_kernels} can be applied to coupled PDEs such as
\begin{align*}
    \partial_t \rho_1 &= \div{\rho_1 \nabla (H_1\ast \rho_1 + K_1 \ast \rho_2} \\
     \partial_t \rho_2 &= \div{\rho_2 \nabla (H_2\ast \rho_2 + K_2 \ast \rho_1}\,,
\end{align*}
for which existence of solutions was studied in \cite{francesco_measure_2013,evers_equilibria_2017,carrillo_measure_2020}, motivated by applications in biological systems.
\end{example}

\section{Applications}\label{sec:applications}
In this section, we illustrate how our model can be applied to the service provider - population game, drawing on models from \cite{shekhtman_strategic_2024}. Then we draw connections between a $n$-player monotone game and a particular set of mean field games.  Finally, we show a numeric example of a two-population game which is not monotone, but almost monotone, and we use diffusion to ensure sufficient probability mass over the monotone region to still observe convergence. Numerical simulations for these examples are implemented using the two-dimensional finite volume method analyzed in \cite{carrillo_finite-volume_2015}.

\subsection{Multi-Learner System}
In this example, we illustrate monotonicity results on a multi-learner setting with strategic users. In this setting, users in population $i\in[1,\dots,N]$ choose to interact with service provider $j\in[1,\dots,M]$, for a total of $n=N+M$ species in the system. Both the user populations and service providers update their parameters over time, evolving according to a gradient flow with respect to their own state. The user populations are denoted as in the rest of the paper by $\rho_i \in \P(\R^{d_\rho})$, and the service providers are assumed to be Diracs on the space of measures, that is, $\rho_{N+j}\coloneqq \delta_{h_j}$, with $h_j\in \R^{d_h}$ for $j\in[1,\dots,M]$. The user populations have utility functions associated with each service provider, denoted $u_{ij}:\R^{d_\rho} \times \R^{d_h} \to \R$. The proportion of population $i$ which purchases from service provider $j$ is $a_{ij}\in[0,1]$, such that $\sum_{j=1}^M a_{ij} = 1$ for all $i\in[1,\dots,N]$. The service provider $j$ has a loss function for interacting with population $i$ given by 
 $l_{ij}: \R^{d_\rho} \times \R^{d_h} \to \R$. Additionally, each population nonlocally interacts with the other populations. The resulting energy functionals are, for all $i\in[1,\dots,n]$ and $j\in[1,\dots,M]$, 
 \begin{align*}
    \text{user populations:}\quad  &F_i[\rho,h] = -\sum_{j=1}^M \int a_{ij} u_{ij}(x_i,h_j) \d \rho_i(x_i) + \sum_{k=1}^N \int \rho_i W_{ik}\ast \rho_k + \alpha_i KL( \rho_i|\rhot_i)  \\
    \text{service providers:}\quad  &F_{N+j}[\rho,h] = \sum_{i=1}^N \int a_{ij} l_{ij}(x_i,h_j) \d \rho_i(x_i) + \beta_j \norm{h_j}^2  \,,
 \end{align*}
 where 
 $\rhot_i$ are given log-concave reference measures and 
 $\alpha_i,\beta_j>0$.
The allocation weights $a_{ij}$ update over time; populations will seek services from providers with the best utility. We select the weight update from \cite{shekhtman_strategic_2024}, given by
\begin{align*}
     a_{ij}(t) \coloneqq \frac{\exp[\eta \int u_{ij}(x_i,h_j(t))\d\rho_i(t,x_i)]}{\sum_{m=1}^M \exp[\eta \int u_{im}(x_i,h_m(t) )\d\rho_i(t,x_i)]  }  \,,
\end{align*}
where $\eta\in\R_+$ parameterizes the weight update steepness.
The system evolves according to \eqref{eq:coupled_gradient_flow_dynamics}.
The system is monotone depending on the specific choice of $u_{ij}$, $l_{ij}$, $W_{ik}$, $\alpha_i$, and $\beta_j$. In the setting where the finite-dimensional game 
\begin{align*}
 -\bigg[\sum_{j=1}^M &\int a_{1j} u_{1j}(x_1,h_j) ,\dots,\sum_{j=1}^M \int a_{Nj} u_{Nj}(x_N,h_j) , 
 \sum_{i=1}^N \int a_{i1} l_{i1}(x_i,h_1) ,\dots, \sum_{i=1}^N \int a_{iM} l_{iM}(x_i,h_M)  \bigg]  
\end{align*}
is $\lambda$-monotone over Euclidean space, with $\lambda \ge 0$, and $W_{ik}=0$
with $\alpha_i,\beta_j>0$, the infinite-dimensional game $F$ is monotone over $L^1(\tilde \rho_i)^{\otimes n}$ by \cref{lem:monotonicity_finite_dimensions}, 
\cref{lem:monotonicity_convexity}(b) and \cref{lem:additivity}. However, if the kernel $W_{ik}$ is, for example, a Morse-like kernel which is locally repulsive and has a negative convexity parameter, monotonicity depends on the condition in \cref{lem:interaction_kernels}, in combination with sufficient monotonicity from $u$, $l$, and the regularizing terms $\beta_j\norm{h_j}^2$
and $\alpha_i KL(\rho_i\, | \, \rhot_i)$.

For a numeric example, we consider a system with $N=2$ user populations, $M=2$ service providers, $d_h=2$, and $d_\rho=2$. An example application for this setting was illustrated in \cite{shekhtman_strategic_2024} as a bank account fraud problem, in which a population putting forward valid requests to open new bank accounts, $\rho_1$, and a population submitting fraudulent requests, $\rho_2$, are distributions over features such as income, customer age, number of months at a current residence, and similarity between the email and name associated with the application. We initialize two log-loss classifiers with the same cost function but different initial conditions. We compare two settings: one, neither population has nonlocal interactions, and two, the fraudulent population has attractive-repulsive nonlocal interactions. The attractive-repulsive kernel models the fraudulent agents attempting to differentiate themselves from other fraudulent agents in order to be misclassified, while having limited ability to move away from the overall population.
The functions used in the simulations are
\begin{align*}
    W_{11}= C_r \exp\left(-\frac{\norm{x}_1}{l_r}\right) - C_a \exp\left(-\frac{\norm{x}_1}{l_a}\right)\,, \quad  
    p(x,h) \coloneqq (1+\exp(-c \theta(h)^\top x + b(h)))^{-1}\,, \\
    u_{ij}(x_i,h_j)= 1-p(x_i,h_i)\,, \quad l_{1j}(x_1,h_j)=\log(p(x_1,h_j))\,,\quad l_{2j}=\log(1-p(x_2,h_j))\,, 
\end{align*}
where $\theta(h)\coloneqq [\cos h(0),\sin h(0)]$ and $b(h)=h(1)$ where $h(0)$ and $h(1)$ are the first and second entries of $h\in\R^2$. The function $p(x,h)$ is the probability of being approved for a loan, and $l_{ij}$ is a log-likelihood, which has a larger gradient (with respect to $h$) than $p$. The coefficients used are $\eta=0.5$, $c=2$, $\alpha_1=\alpha_2=0.5$, $\beta_1=\beta_2=2.4$, $C_r = 8$, $l_r = 0.5$, $C_a = 2$, and $l_a = 1$.  Even though this system is not $\lambda$-monotone in, for example, $\P_2(\R^2)\cap L^1(\R^2)\times \P_2(\R^2)\cap L^1(\R^2) \times \R^2 \times \R^2$, we empirically observe convergence.
\begin{figure}
    \begin{subfigure}{0.23\textwidth}
        \includegraphics[width=0.9\linewidth,trim={2.65cm 0cm 15.7cm 1.05cm},clip]{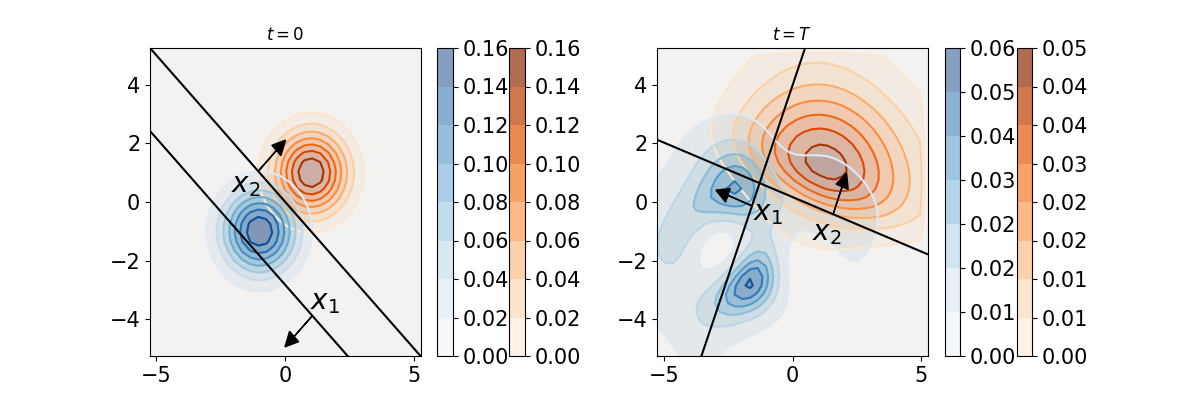}
    \caption{$T=0$
    }
    \end{subfigure}
    \begin{subfigure}{0.23\textwidth}
        \includegraphics[width=0.9\linewidth,trim={15.85cm 0 2.5cm 1.05cm},clip]{images/experiment2_partialweight_kernel_6.png}
    \caption{$T=6$
    }
    \end{subfigure}
    \begin{subfigure}{0.23\textwidth}
        \includegraphics[width=0.9\linewidth,trim={15.85cm 0 2.5cm 1.05cm},clip]{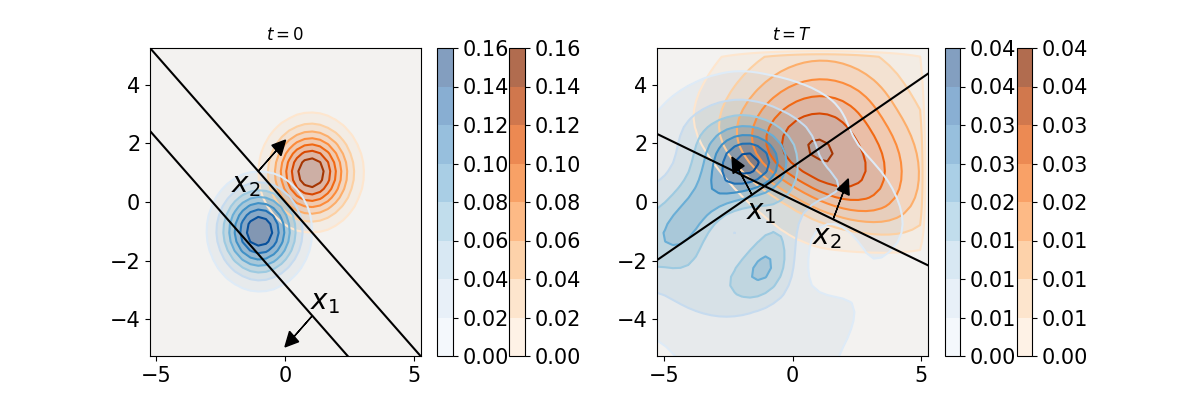}
        \caption{$T=12$
        }
    \end{subfigure}
    \begin{subfigure}{0.23\textwidth}
        \includegraphics[width=0.9\linewidth,trim={15.85cm 0 2.5cm 1.05cm},clip]{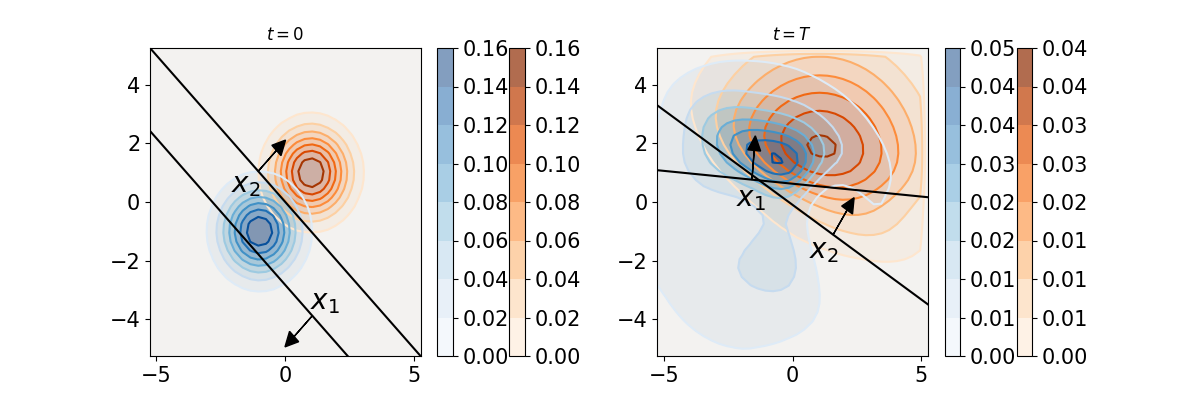}
        \caption{$T=20$
        }
    \end{subfigure}
    \caption{(Multi-Learner System) The fraudulent population (blue) and valid population (orange) both aim for high probability of approval from the classifiers. The black lines labeled $x_1$ and $x_2$ mark the $50\%$ classification probability threshold for $h_1$ and $h_2$ respectively, with the arrows pointing in the direction of increasing probability of loan approval. In this setting, the fraudulent population has an interaction kernel, and the fraudulent population splits into two modes, strongly preferring the blue classifier.}
    \label{fig:attractive_repulsive_kernel}
\end{figure}

\begin{figure}
    \begin{subfigure}{0.23\textwidth}
        \includegraphics[width=0.9\linewidth,trim={2.65cm 0cm 15.7cm 1.05cm},clip]{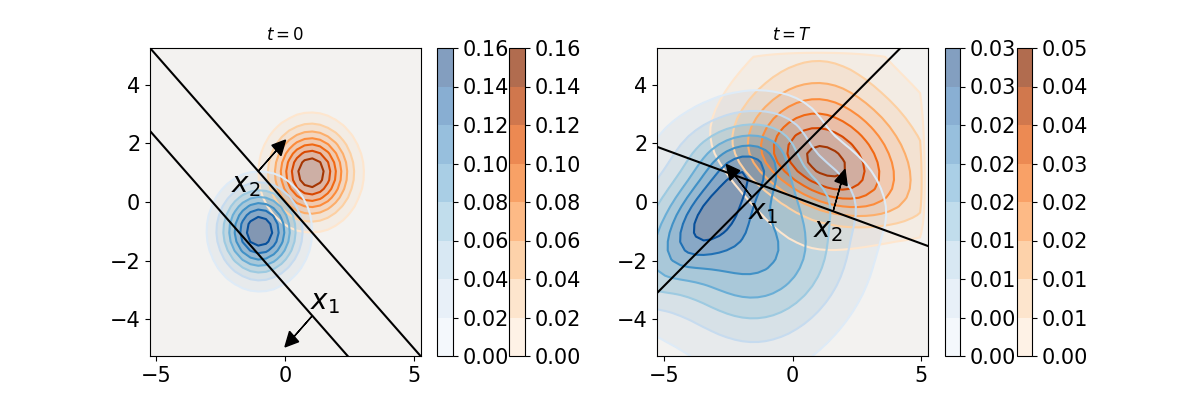}
    \caption{$T=0$
    }
    \end{subfigure}
    \begin{subfigure}{0.23\textwidth}
        \includegraphics[width=0.9\linewidth,trim={15.85cm 0 2.5cm 1.05cm},clip]{images/experiment2_partialweight_6.png}
    \caption{$T=6$
    }
    \end{subfigure}
    \begin{subfigure}{0.23\textwidth}
        \includegraphics[width=0.9\linewidth,trim={15.85cm 0 2.5cm 1.05cm},clip]{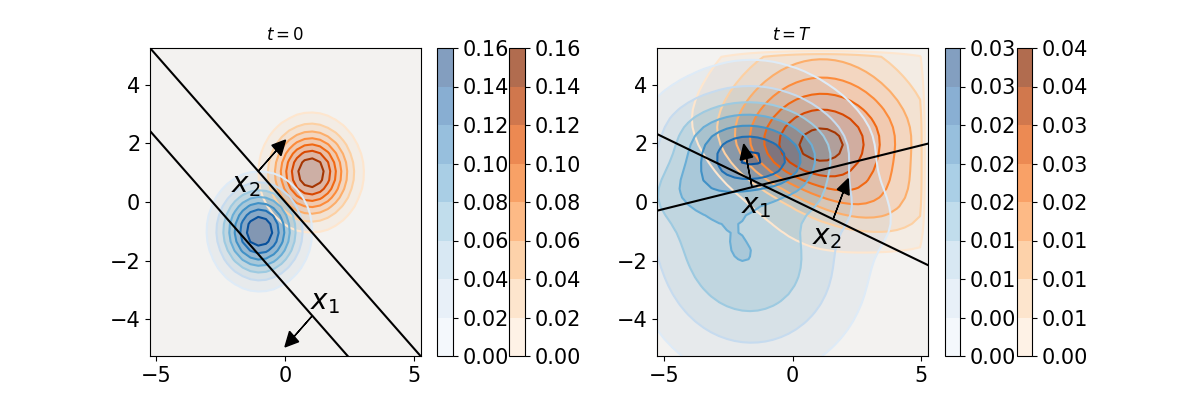}
        \caption{$T=12$
        }
    \end{subfigure}
    \begin{subfigure}{0.23\textwidth}
        \includegraphics[width=0.9\linewidth,trim={15.85cm 0 2.5cm 1.05cm},clip]{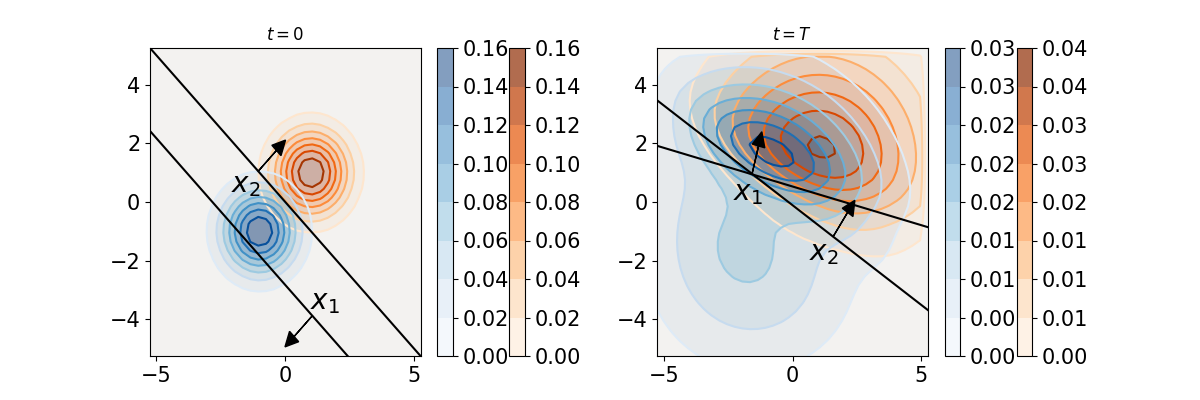}
        \caption{$T=20$
        }
    \end{subfigure}
    \caption{(Multi-Learner System) In the setting without any interaction kernels, the fraudulent population does not split. Additionally, there are more fraudulent agents obtaining services from bank $2$ than in the setting with the interaction kernel.}
    \label{fig:no_kernel}
\end{figure}

\begin{table}[]
    \centering
    \begin{tabular}{c c||c|c|c|c|}
   & kernel & $\rho_1$ using bank $1$ & $\rho_1$ using bank $2$ & $\rho_2$ using bank $1$ & $\rho_2$ using bank $2$ \\
    \hline \hline
    $T=0$ &   yes & $a_{11}=0.74$ & $a_{12}=0.26$ & $a_{21}=0.01$ & $a_{22}=0.99$ \\
     &   no & $a_{11}=0.74$ & $a_{12}=0.26$ & $a_{21}=0.01$ & $a_{22}=0.99$ \\ 
     \hline 
     $T=6$ &   yes & $a_{11}=0.73$ & $a_{12}=0.27$ & $a_{21}=0.03$ & $a_{22}=0.97$ \\
     &   no & $a_{11}=0.65$ & $a_{12}=0.35$ & $a_{21}=0.07$ & $a_{22}=0.93$ \\ 
     \hline
     $T=12$ &   yes & $a_{11}=0.58$ & $a_{12}=0.42$ & $a_{21}=0.11$ & $a_{22}=0.89$ \\
     &   no  & $a_{11}=0.52$ & $a_{12}=0.48$ & $a_{21}=0.25$ & $a_{22}=0.75$ \\
     \hline
     $T=20$ &   yes & $a_{11}=0.50$ & $a_{12}=0.50$ & $a_{21}=0.35$ & $a_{22}=0.65$ \\
     &   no & $a_{11}=0.45$ & $a_{12}=0.55$ & $a_{21}=0.40$ & $a_{22}=0.60$ \\ 
    \end{tabular}
    \caption{(Multi-Learner System) At times $T=6,12$, bank $1$ has a larger share of the fraudulent population ($\rho_1$) than bank $2$, with a larger margin in the kernel setting. At time $T=20$, the valid population ($\rho_2$) prefers bank $2$ with or without the interaction kernel, but the interaction kernel changes the bank preference for the fraudulent population.}
    \label{tab:classifier_allocation}
\end{table}

In comparing the trajectories from both settings, we observe two key differences. In \cref{fig:no_kernel}, the population never splits into a bimodal distribution, unlike in the setting with the nonlocal interaction kernel shown in \cref{fig:attractive_repulsive_kernel}. Secondly, as show in \cref{tab:classifier_allocation}, in the interaction kernel setting, bank $1$ has a worse customer profile at time steps $T=6$ and $T=12$, as it has more fraudulent customers and less valid customers than in the setting with no interaction kernel. Likewise, bank $2$, which has the better initial condition, has a better customer profile. Even though the final population distributions appear to be similar, the customer allocation between the two classifiers favors bank $2$.

\subsection{Team Mean Field Games}\label{ex:mean_field_game}
Consider a setting with a single energy $F:\A_1 \to \R$ and single species $\rho=\rho_1$. We would like to solve a mean-field game parameterized by the energy functional $F$, viewing $\rho$ as the density of particles participating in the game. Each particle aims to minimize a cost function, which includes a velocity penalty, $\norm{v}^2$, as well as a penalty on the entire distribution, given in terms of $F[\rho]$.  For long time horizons, a discount factor $\varepsilon$ is used to weigh the cost at earlier times more heavily than the cost at later times and a terminal cost is given by $g_\varepsilon$.  This model is a generalization of the model in \cite{butano_discounted_2024}, which describes the density of a crowd  of people with an intruder passing through. The optimization problem is 
\begin{align*}
    \inf_{v} \left\{ \begin{aligned}
       &\int_0^T \int\left( \frac{\norm{v(t,y(t))}^2}{2} e^{-t/\varepsilon} + f_\varepsilon(t,y(t),\rho(t)) \right) \d \rho(y(t))\,\d t + g_\varepsilon(T,y(T),\rho(T))
      \\
    &\qquad \quad  \text{such that }\d y(t)=v(t,y(t)) \d t\,,\   y(0)\sim \rho(0,\cdot)\,, \ \text{Law}\{y\} =\rho(t,\cdot) 
    \end{aligned}
     \right\}
    \,,
\end{align*}
where $f_\varepsilon(t,y(t),\rho(t))\coloneqq \frac{e^{-t/\varepsilon}}{\varepsilon} \delta_\rho F[\rho(t)]$ and $g_\varepsilon(t,y(t),\rho(t))=\varepsilon f_\varepsilon(t,y(t),\rho(t))=e^{-t/\varepsilon} \delta_\rho F[\rho(t)]$. 
The particle $y(t)$ is sampled from the distribution $\rho(t)$, and driven by the velocity field $v$.
This control problem is equivalent to the mean field game
\begin{align*}
    \inf_{v,\rho} \left\{ \begin{aligned}
      &\int_0^T \bigg(\int \frac{e^{-t/\varepsilon}}{2}\norm{v(t,x)}^2 \d\rho(t,x) + \frac{e^{-t/\varepsilon}}{\varepsilon} F[\rho(t)]\bigg)\d t + F[\rho(T)] e^{-T/\varepsilon} 
\\
&\qquad \quad \text{such that } \partial_t \rho(t) + \nabla \cdot (v(t) \rho(t)) = 0
    \end{aligned}
     \right\}
    \,,
\end{align*}
This mean field game is studied in \cite{zhang_mean-field_2023}, and it was shown that the optimal $(\rho,v)$ results in a perturbation of a gradient flow \cite[Theorem 14, Remark 15]{zhang_mean-field_2023}. Here, we consider an extension to the setting where different particles have different cost functionals depending on their type, or species. Collecting the particles by species, the optimization problem becomes
\begin{align*}
    \inf_{v_i,\rho_i} \left\{ \begin{aligned}
       &\int_0^T \left(\int \frac{e^{-t/\varepsilon}}{2}\norm{v_i(t,x_i)}^2 \d\rho_i(t,x_i) + \frac{e^{-t/\varepsilon}}{\varepsilon} F_i[\rho(t)]\right)\d t + F_i[\rho(T)] e^{-T/\varepsilon} 
 \\
&\qquad  \quad \text{such that }\partial_t \rho_i(t) + \nabla_{x_i} \cdot (v_i(t) \rho_i(t)) = 0\,,\quad \forall\, \iin
    \end{aligned}
     \right\}
    \,,
\end{align*}
Following a similar approach as the proof in \cite{zhang_mean-field_2023} (equations 74-75), the optimal solution takes the form
\begin{align}\label{eq:n_player_mfg}
\begin{split}
    \partial_t \rho_i &= \divs{x_i}{\rho_i \nabla_{x_i} \delta_{\rho_i}F_i[\rho]} + \varepsilon \divs{x_i}{\rho_i \nabla_{x_i}(\partial_t U_i - \frac{1}{2}|\nabla U_i|^2)} \\
    -\varepsilon \partial_t U_i &+ U_i + \frac{\varepsilon}{2}|\nabla_{x_i} U_i|^2 = \delta_{\rho_i} F_i[\rho] \,.
\end{split}
\end{align}
For monotone $F$, \eqref{eq:n_player_mfg} is a perturbation of coupled gradient flows with monotone energies. It is an interesting open problem to determine the sufficient conditions on $F_i[\rho_i,\cdot]$, such as smoothness, under which we expect that the result in \cite[Theorem 14]{zhang_mean-field_2023} for the limit as $\varepsilon \rightarrow 0$ can be shown in this multispecies setting. The long-time behavior of the solution in the limit $\varepsilon \rightarrow 0$ can then be analyzed using our monotonicity framework, since it follows the evolution \eqref{eq:coupled_gradient_flow_dynamics}.

\subsection{Degenerately Monotone Game with Diffusion}
For an energy functional that is "almost" strongly convex, exponential convergence of the corresponding gradient flow can still be shown if the energy functional also contains a linear entropy term. The notion of convexity used is called \emph{degenerate} convexity. A potential function $V:\R^d\to \R$ is degenerately convex if $\nabla^2 V \ge \psi(x)$ for some $\psi\in C(\R^d)$ with $\psi(x)>0$ if $x\ne 0$, and $\psi(x)$ is uniformly bounded from below by some positive constant as $\norm{x}\to \infty$. Exponential convergence for gradient flows with energies of the form
\begin{align*}
    F_i[\rho_i]=\int V_i(x_i)\d\rho_i(x_i) + \int W_i(x_i-y_i) \d \rho_i(x_i) \d \rho_i(y_i) + \int \log\rho_i(x_i) \d \rho_i(x_i)\,,
\end{align*}
where $V_i$ and $W_i$ are degenerately convex, are given in \cite{carrillo_kinetic_2003}; this occurs because the entropy term ensures $\rho_i$ has mass in regions of the $\R^{d_i}$ where the Hessians of $V_i$ and $W_i$ are strongly convex.
In finite dimensions, a convex potential that is not strongly convex results in a polynomial rate, but not an exponential rate. Here we apply the results from \cite{carrillo_kinetic_2003}, Rule \#5, to obtain a rate in combination with the effects of coupling between the species. From a PDE analysis perspective, it is interesting that the strength of the inter-species coupling does not necessarily change the exponential convergence rate. From a game theory perspective, our framework expands the set of games typically considered by including entropy in the cost  function, and a framework for understanding why adding entropy, even though it is not strongly convex, enables an exponential rate. 
The energy functionals are $F_1=F$ and $F_2=-F$, with $F$ given by
\begin{align*}
    F[\rho_1,\rho_2] &= \frac{a}{2}\int \norm{x_1}^4 \d\rho_1(x_1) - \frac{a}{2}\int \norm{x_2}^4 \d \rho_2(x_2) 
    + \frac{b}{2}\iint x_1 ^\top x_2 \d \rho_1(x_1) \d \rho_2(x_2) + c(\Hc(\rho_1)-\Hc(\rho_2))\,,
\end{align*}
where $\Hc(\rho_i) = \int \log \rho_i(x_i) \d \rho_i(x_i)$, and $a,b,c>0$. In \cite{carrillo_kinetic_2003}, the authors illustrate how functionals with degenerate convexity can still have exponential convergence due to diffusion, which keeps mass in an area of the domain where the energy can be lower-bounded by a strongly convex function. In our example, the quartic term is not strongly convex; however, the diffusion term allows us to obtain a rate and we call this setting \emph{degenerately monotone}.
The dynamics are given by
\begin{align*}
    \partial_t \rho_1 = -\nabla_{\Wass_2,\rho_1}F[\rho_1,\rho_2] \qquad \partial_t \rho_2 = \nabla_{\Wass_2,\rho_2} F[\rho_1,\rho_2]\,.
\end{align*}
In \cref{fig:quartic_cv}, the energy is plotted as a function of time, with the initial energy normalized to $F[\rho(0)]=1$. Fixing $a=15$ and $c=0.5$, we see that the value of $b$ has little effect on the convergence rate but does affect the oscillation frequency. This is because the interaction term is linear for each player.
\begin{figure}[!htb]
    \centering
    \includegraphics[width=0.8\linewidth]{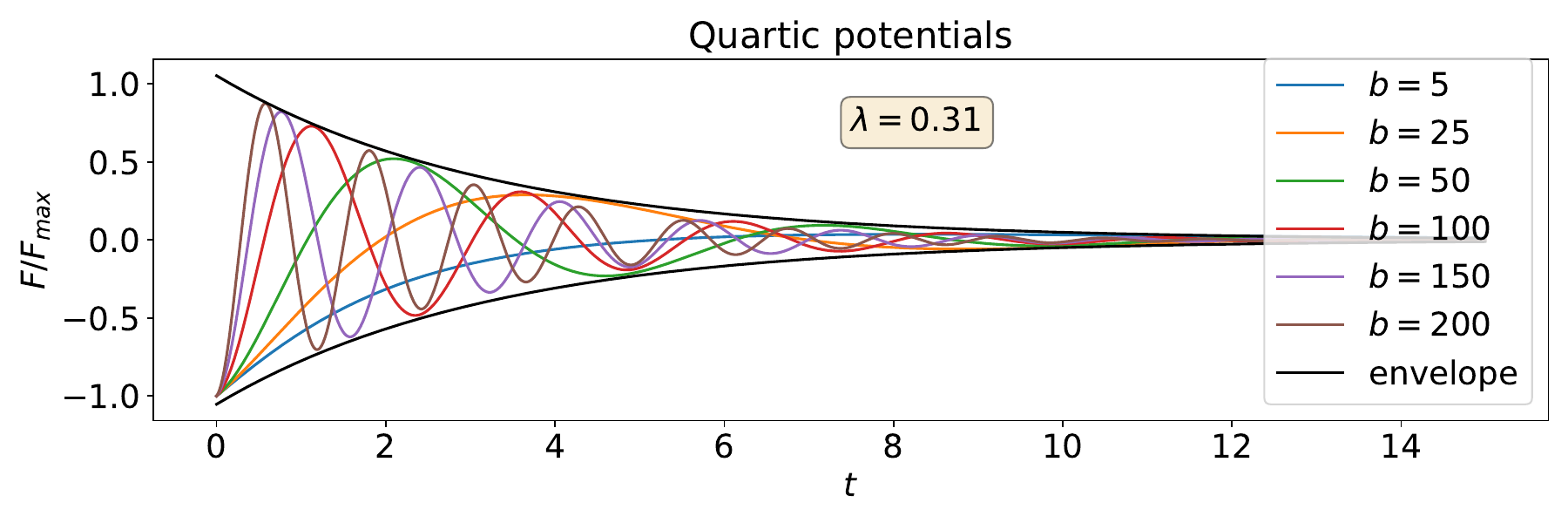}
    \caption{(Degenerately Monotone Game with Diffusion) The upper-bound for the convergence rate across the tested values of $b$ is $\lambda=0.31$. As the coupling between the two players strengthens, the oscillations of $F$ increase in frequency, but still decay at roughly the same rate.}
    \label{fig:quartic_cv}
\end{figure}
\begin{figure}
    \centering
    \includegraphics[width=0.99\linewidth]{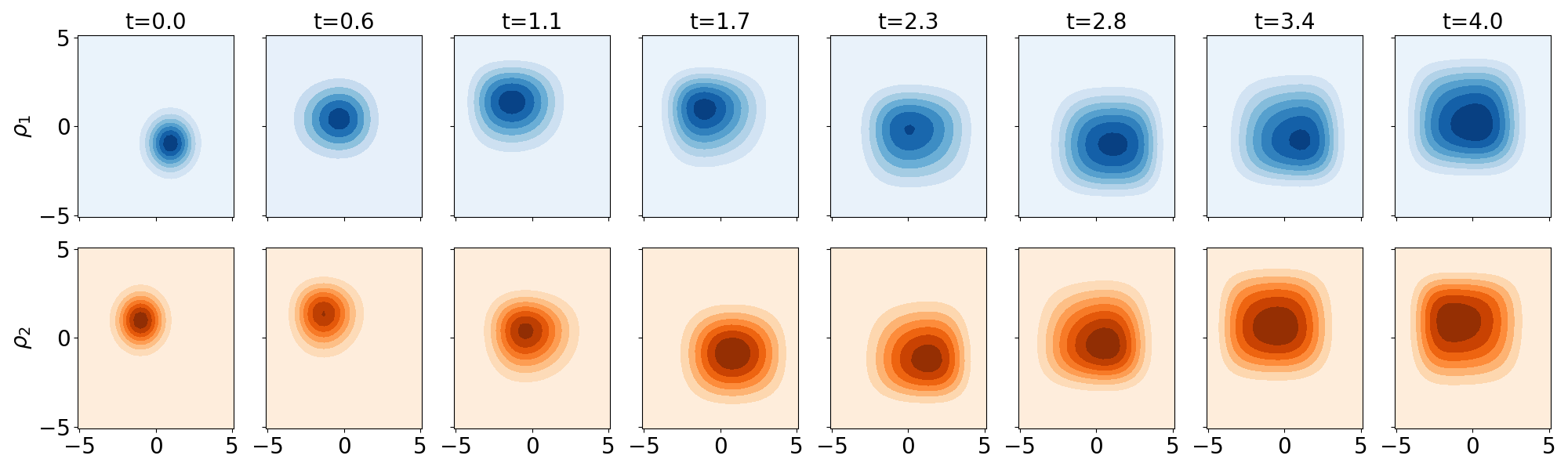}
    \caption{(Degenerately Monotone Game with Diffusion) In the setting where $b=150$, the density of each species is plotted from $t=0$ to $t=4$, during which time the energy has oscillated a few times. The densities for $\rho_1$ (blue) and $\rho_2$ (orange) spread out due to diffusion, eventually minimizing a quartic-shaped basin. The species $\rho_2$ aims to overlap with $\rho_1$ while $\rho_1$ aims to be in the opposite quadrant as $\rho_2$, due to the coupling term $\iint x_1^\top x_2 \d \rho_1(x_1)\d\rho_2(x_2)$, which $\rho_1$ aims to minimize and $\rho_2$ aims to maximize.}
    \label{fig:quartic_densities}
\end{figure}
The simulation indicates that convergence can still occur without strong monotonicity, as  expected.
In \cref{fig:quartic_densities}, the densities of $\rho_1$ and $\rho_2$ are plotted for $t\in[0,4]$, and the oscillating behavior of both species is demonstrated. Eventually the densities have a shape consistent with quartic tails over $\R^2$. An interesting area of future work would be to explore not only the setting of degenerate monotonicity, but also more general non-monotone settings.

\section*{Acknowledgments}
The authors are grateful for discussions with Jos\'e A. Carrillo about displacement convexity, with Giuseppe Savar\'e about accretivity and recent literature, and with Xianjin Yang about mean field games, as well as to Dejan Slep\v cev, Eitan Levin, and Matthieu Darcy. Guilia Cavagnari provided a correction for the assumption of \cref{thm:admissible_couplings}. LC is funded by an NDSEG fellowship from the AFOSR and a PIMCO fellowship, FH is supported by start-up funds at the California Institute of Technology and by NSF CAREER Award 2340762, EM is supported in part from NSF award 2240110, LJR is supported in part by NSF 1844729, NSF 2312775, and ONR YIP N000142012571.

\appendix
\section{Auxiliary Definitions and Results}\label{sec:appendix}
We start with an overview of different characterizations of the Wasserstein-2 metric used throughout the manuscript. Then we detail how to derive the equivalence between first order and second order monotonicity conditions in \cref{sec:2nd-order-monotonicity}.

\begin{definition}[Wasserstein-2 Metric]\label{def:wasserstein}
The Wasserstein-2 metric between two probability measures $\mu, \nu\in\P_2(\R^d)$ is given by
    \begin{align*}
        \Wass_2(\mu, \nu)^2 = \inf_{\gamma\in\Gamma(\mu,\nu)} \int \norm{z-z'}_2^2 \d \gamma(z,z')
    \end{align*}
    where $\Gamma$ is the set of all joint probability distributions with marginals $\mu$ and $\nu$, i.e. $\mu (\d z) = \int \gamma(\d z,z')\d z'$ and $\nu(\d z') = \int \gamma(z,\d z') \d z$. 
\end{definition}
If $\mu,\nu\in\P_2(\R^d)$ are such that at least one transport map $T$ pushing forward $\mu$ to $\nu$ exists (written as $\nu=T_\sharp \mu$), we obtain the \emph{Monge formulation} of the Wasserstein-2 distance:
\begin{align*}
    \Wass_2(\mu,\nu)^2 = \inf_{T\ :\ \nu=T_\sharp \mu} \int \norm{x-T(x)}^2 \d \mu(x) \,.
\end{align*}
Since this is not true for all pairs $\mu,\nu\in\P_2(\R^d)$, the Kantorovich formulation \cref{def:wasserstein} is more general than the Monge formulation.
Another way of writing the Wasserstein-2 Metric is via the \emph{Benamou-Brenier formulation} (also known as the \emph{dynamic formulation} of $\Wass_2$), expressing the distance between two probability measures $\mu,\nu\in\P_2(\R^d)$ as the path solving a continuity equation with minimal kinetic energy:
\begin{align*}
    \Wass_2(\mu,\nu)^2 = \inf_{(\mu_t,w_t)} \left\{ \int_0^1 \int \norm{w(t,x)}^2 \d \mu(t,x) \d t\,;\ \partial_t\mu_t + \nabla \cdot (\mu_t w_t)=0\, \ \text{s.t.} \ \mu_0=\mu,\,\mu_1=\nu\right\}\,.
\end{align*}
The Benamou-Brenier formulation is useful for geodesic calculations, as the optimal $(\mu,w)$ are also solutions to the geodesic equation \eqref{eq:geodesic_eqns}. 

\subsection{Second order monotonicity condition}\label{sec:2nd-order-monotonicity}

In what follows, we prove the equivalence of the first and second order monotonicity conditions (see \cref{def:monotonicity} and \cref{prop:monotonicity_implication}). 
Recall the displacement interpolant between $\rhozero_i$ and $\rhoone_i$ is defined as $\rhos_i = (I^{(s)}_i)_{\sharp} \gamma_i$, where $I^{(s)}_i(x_i,y_i)=(1-s)x_i + s y_i$, for all $s\in[0,1]$ and $\gamma_i\in\Gamma^*(\rhozero_i,\rhoone_i)$.
\begin{lemma}[Second-Order Monotonicity, Geodesics]\label{lem:second_order_monotonicity_v_geodesic} 
    Let \cref{assump:regularity_v} hold and $\A\subset \Pbar$ be geodesically convex. Then $v:\A \to \R^n$ is $\lambda$-monotone in $\A$ if and only if
    \begin{align*}
    &-\int_0^1\bigg(\iint \<x-y,J_2[\rhos](I^{(s)}(x,y), I^{(s)}(\hat x,\hat y))\, (\hat x-\hat y) >\d \gamma(x,y) \d \gamma(\hat x,\hat y) \\
    & \qquad+\int\<x-y,J_1[\rhos](I^{(s)}(x,y))\, (x-y) > \d \gamma(x,y)\bigg) \d s \ge \lambda \int\norm{x-y}^2\d\gamma(x,y)\,,
\end{align*}
holds for all $\rhozero,\rhoone\in\A$ and $\gamma\in\Gamma^*(\rhozero,\rhoone)$, where $(\rho^{(s)})_{s\in[0,1]}$ denotes the geodesic between $\rhozero$ and $\rhoone$.
\end{lemma}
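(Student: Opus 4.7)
The core observation is that the gap between the first-order and second-order formulations is a single application of the fundamental theorem of calculus along the geodesic $(\rho^{(s)})_{s \in [0,1]}$. I would introduce the auxiliary function
\begin{align*}
    h(s) := -\int \<x-y, v[\rhos](I^{(s)}(x,y))> \d \gamma(x,y), \qquad s \in [0,1],
\end{align*}
which satisfies $h(0) - h(1) = -\int \<x-y, v[\rhozero](x) - v[\rhoone](y)> \d \gamma(x,y)$, i.e.\ the left-hand side of the first-order monotonicity condition in \cref{def:monotonicity}. Thus the task reduces to showing that $-\int_0^1 h'(s)\,\d s$ agrees with the integrand appearing in the claimed second-order formulation; granting this identity, the equivalence of the two conditions is immediate since both reduce to the same inequality $h(0) - h(1) \ge \lambda \int \|x-y\|^2 \d\gamma$.

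To compute $h'(s)$, I would differentiate under the integral sign and apply the chain rule to $s \mapsto v[\rhos](I^{(s)}(x,y))$, which splits into two pieces. The first is the spatial contribution from $\partial_s I^{(s)}(x,y) = y-x$, producing $J_1[\rhos](I^{(s)}(x,y))\,(y-x)$ by \cref{assump:regularity_v}. The second is the variational contribution from $s \mapsto \rhos$: since $\rhos_j = (I^{(s)})_\sharp \gamma_j$, for any smooth test function $\phi$ one has
\begin{align*}
    \ddt\Big|_{t=s} \int \phi\, \d \rho_j^{(t)} = \int \nabla \phi(I^{(s)}(\hat x_j, \hat y_j))\cdot(\hat y_j - \hat x_j)\,\d \gamma_j(\hat x_j, \hat y_j),
\end{align*}
which (applied entry-wise to $\hat w_j \mapsto \delta_{\rho_j} v_i[\rhos](I^{(s)}(x,y), \hat w_j)$) gives the second contribution
\begin{align*}
    \int J_2[\rhos](I^{(s)}(x,y), I^{(s)}(\hat x, \hat y))\,(\hat y - \hat x)\,\d \gamma(\hat x,\hat y).
\end{align*}
Taking the inner product with $x-y$, integrating against $\d\gamma(x,y)$, and inserting the minus sign produces
\begin{align*}
    h'(s) = \int \<x-y, J_1[\rhos](I^{(s)}(x,y))\,(x-y)> \d \gamma + \iint \<x-y, J_2[\rhos](I^{(s)}(x,y), I^{(s)}(\hat x,\hat y))\,(\hat x - \hat y)> \d \gamma\,\d \gamma.
\end{align*}
Integrating this identity from $0$ to $1$ and using $\int_0^1 h'(s)\,\d s = h(1) - h(0)$ recovers the expression on the left-hand side of the claimed inequality, completing both implications simultaneously.

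The main technical obstacle is justifying the formal chain rule for $v[\rhos](I^{(s)}(x,y))$, in particular the interchange of differentiation and integration and the use of the pushforward identity for $\partial_s \rhos$. The spatial piece is handled directly by \cref{assump:regularity_v}, which guarantees $J_1[\rho]$ exists $\rho$-a.e. The variational piece requires $J_2[\rho]$ to behave well enough (again by \cref{assump:regularity_v}) so that the formal integration by parts against $\partial_s \rhos = -\mathrm{div}(\rhos w^{(s)})$ along the geodesic with velocity $w_j^{(s)}(I^{(s)}(\hat x_j, \hat y_j)) = \hat y_j - \hat x_j$ is valid. Assuming the regularity needed to apply dominated convergence pointwise in $s$ (which is implicit in \cref{assump:regularity_v}), each step is standard; the delicate point is only that, since the computation is done along a geodesic, $\gamma$ is the optimal plan and $I^{(s)}$ furnishes the canonical representative of the geodesic velocity — no choice of map or tangent-space representative is needed.
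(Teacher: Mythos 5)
Your proposal is correct and follows essentially the same route as the paper: write the difference $-\int\langle x-y,v[\rhozero](x)-v[\rhoone](y)\rangle\,\d\gamma$ as $-\int_0^1\frac{\d}{\d s}\big(\int\langle x-y,v[\rhos](I^{(s)}(x,y))\rangle\,\d\gamma\big)\d s$ along the geodesic, then split the $s$-derivative into the spatial contribution ($J_1$, via $\partial_s I^{(s)}=y-x$) and the variational contribution ($J_2$, via the weak-form derivative of the pushforward interpolant), which is exactly the paper's computation. The only cosmetic difference is that the paper works directly with the pushforward identity for $\partial_s\rhos$ rather than phrasing the variational term as an integration by parts against $-\div{\rhos\ws}$, but the two are the same calculation.
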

\begin{proof}
    Select $\rhozero,\rhoone\in\A$ and $\gamma\in\Gamma^*(\rhozero,\rhoone)$.
     We rewrite \cref{def:monotonicity} in terms of the derivatives of the interpolants evaluated at $s=0$ and $s=1$:
\begin{align*}
    \left(\sum_{i=1}^n \int \< y_i-x_i , v_i[\rhos]((1-s)x_i + s y_i) >\d \gamma_i(x_i,y_i)\right)\bigg|_{s=0} &= \int \<y-x,v[\rhozero](x)> \d \gamma(x,y) \,, \\
        \left(\sum_{i=1}^n\int \< y_i-x_i , v_i[\rhos]((1-s)x_i + s y_i) > \d \gamma_i(x_i,y_i) \right)\bigg|_{s=1} 
        &= \int \<y-x ,v[\rhoone](y)> \d \gamma(x,y)\,.
\end{align*}
    Noting that
    \begin{align*}
        &\left(\sum_{i=1}^n \int \< y_i -x_i, v_i[\rhos]((1-s)x_i + s y_i) > \d \gamma_i(x_i,y_i) \right)\bigg|_{s=1}\\
        &-\left(\sum_{i=1}^n \int \< y_i -x_i, v_i[\rhos]((1-s)x_i + s y_i) >\d \gamma_i(x_i,y_i) \right)\bigg|_{s=0}  \\
        &\qquad = \int_0^1 \frac{\d}{\d \tau} \left(\sum_{i=1}^n \int \< y_i-x_i , v_i[\rhotau]((1-\tau)x_i + \tau y_i) > \d \gamma_i(x_i,y_i) \right)\d \tau\,,
    \end{align*}
monotonicity according to \cref{def:monotonicity} can be equivalently written as
    \begin{align*}
        -\int_0^1 \frac{\d}{\d \tau} \left(\sum_{i=1}^n \int \< y_i-x_i , v_i[\rhotau]((1-\tau)x_i + \tau y_i) > \d \gamma_i(x_i,y_i) \right)\d \tau \ge \lambda \int\norm{x-y}^2\d\gamma(x,y)\,.
    \end{align*}
We will use derivatives of the interpolants, which are computed in the weak form via
    \begin{align*}
       \int \partial_s \rhos_i(x_i) \phi(x_i) \d x_i = \int  \partial_s \phi(I^{(s)}(x_i,y_i))\d \gamma_i(x_i,y_i) 
       =  \int \<y_i-x_i,\nabla \phi(I^{(s)}(x_i,y_i))>\d \gamma_i(x,y) \\
       \forall\, \phi\in C_c^1(\R^{d_i})\,.
    \end{align*} 
Writing out the derivative results in
\begin{align*}
    \dds  &\bigg(\sum_{i=1}^n \int \< y_i-x_i , v_i[\rhos](I^{(s)}_i(x_i,y_i)) >\d \gamma_i(x_i,y_i) \bigg)  \\
    &=\sum_{i=1}^n \int \<x_i-y_i,\nabla_{x_i} v_i[\rhos](I^{(s)}_i(x_i,y_i))\, (x_i-y_i)> \d \gamma_i(x_i,y_i) \\
    &+\sum_{i,j=1}^n \iint \<\hat x_j-\hat y_j,\nabla_{x_j} \delta_{ \rho_j} v_i[\rhos](I^{(s)}_i(x_i,y_i),I^{(s)}_i(\hat x_j,\hat y_j))\, (x_i-y_i)> \d \gamma_i(x_i,y_i)\, \d \gamma_j(\hat x_j,\hat y_j) \\
        &= \iint \<x-y,J_2[\rhos](I^{(s)}(x,y),I^{(s)}(\hat x,\hat y))\, (\hat x -\hat y) >\d \gamma(x,y) \d \gamma(\hat x,\hat y) 
        \\
        &
        +\int\<x-y,J_1[\rhos](I^{(s)}(x,y))\, (x-y) > \d \gamma(x,y) \,. 
\end{align*}
Now we have shown that, for $\rhos$ the geodesic interpolant between $\rhozero$ and $\rhoone$,
\begin{align*}
    &\int_0^1\bigg(\iint \<x-y,J_2[\rhos](I^{(s)}(x,y),I^{(s)}(\hat x,\hat y))\, (\hat x-\hat y) >\d \gamma(x,y) \d \gamma(\hat x,\hat y)  \\
    &\qquad+\int\<x-y,J_1[\rhos](I^{(s)}(x,y))\, (x-y) > \d \gamma(x,y)\bigg) \d s \\
    &= 
    \int \<x-y,v[\rhozero](x)-v[\rhoone](y)> \d \gamma(x,y) \,,
\end{align*}
and therefore the second-order geodesic notion of monotonicity is equivalent to the first-order definition.
\end{proof}

Since $\Wbar$ has only a formal Reimannian structure, we cannot directly apply results for monotonicity in the Riemannian manifold setting, but we can use an appropriate notion of tangent vectors instead of exponential maps, following the proof techniques which are outlined in \cite[Proposition 16.2]{Villani07}.

\begin{lemma}[Second-Order Monotonicity, Velocity Fields]\label{lem:second_order_monotonicity_v} 
    Let \cref{assump:regularity_v,assump:continuity_v,assump:tangent_space} hold and $\A\subset \Pbar$ be geodesically convex. 
Then $v:\A \to \R^n$ is $\lambda$-monotone in $\A$ if and only if
    \begin{align*}
   - \int\< w(x),J_1[\rhozero](x)\, w(x) > \d \rhozero(x) -\iint \< w(x),J_2[\rhozero](x,\hat x)\,  w(\hat x) >\d \rhozero(x) \d \rhozero(\hat x) \qquad
       \\ \ge \lambda \int \norm{w(x)}^2 \d \rhozero(x)\,.
\end{align*}
    for all $\rhozero\in\A$ and  $w_i\in Y_i(\rho^{(0)}_i,\A_i)$ for all $\iin$.
\end{lemma}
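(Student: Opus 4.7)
The plan is to use Lemma~\ref{lem:second_order_monotonicity_v_geodesic} as a bridge: first-order $\lambda$-monotonicity is already equivalent to the geodesic-integrated second-order inequality stated there, so it suffices to establish that this geodesic form is equivalent to the pointwise local form in the statement. The $(\Rightarrow)$ direction will be handled by an infinitesimal argument that collapses the two endpoints into one along an admissible velocity, while the $(\Leftarrow)$ direction will be a geodesic-decomposition argument that evaluates the local inequality at every intermediate $\rho^{(s)}$ and integrates in $s$.

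For $(\Rightarrow)$, I would fix $\rhozero\in\A$ and $w=(w_i)$ with $w_i\in Y_i(\rhozero_i,\A_i)$. By the $L^2$-closure in the definition of $Y_i$ it suffices to take $w_i=r_i-\id$ with $(\id,r_i)_\sharp\rhozero_i\in\Gamma^*(\rhozero_i,(r_i)_\sharp\rhozero_i)$. For small $\eps>0$ set $r_{\eps,i}:=\id+\eps w_i=(1-\eps)\id+\eps\, r_i$ and $\rhoone_{\eps,i}:=(r_{\eps,i})_\sharp\rhozero_i$. Since $r_i=\nabla\varphi_i$ with $\varphi_i$ convex (Brenier), $r_{\eps,i}$ is the gradient of a convex function, so the plan $\gamma_\eps:=(\id,r_\eps)_\sharp\rhozero$ is still optimal; by geodesic convexity, the interpolants $\rho^{(s)}_\eps=(\id+s\eps w)_\sharp\rhozero$ lie in $\A$. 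Substituting $\gamma_\eps$ into the geodesic form of Lemma~\ref{lem:second_order_monotonicity_v_geodesic} (so that $x-y=-\eps w(x)$ and $I^{(s)}(x,r_\eps(x))=x+s\eps w(x)$), extracting a factor $\eps^2$, and letting $\eps\to 0$, Assumption~\ref{assump:continuity_v} lets me pass the limit inside the integrals in $x$ and $\hat x$. The integrand becomes constant in $s$, so the $s$-integral is trivial and I recover precisely the pointwise inequality for $w$. General elements of $Y_i(\rhozero_i,\A_i)$ are then reached by $L^2$-density.

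For $(\Leftarrow)$, fix $\rhozero,\rhoone\in\A$ and $\gamma\in\Gamma^*(\rhozero,\rhoone)$, and consider the geodesic $\rho^{(s)}=I^{(s)}_\sharp\gamma\in\A$. In the case where each $\gamma_i$ is concentrated on a Brenier map $r_i$, the map $T^{(s)}_i:=(1-s)\id+s\, r_i$ is invertible on $\supp\rhozero_i$ for $s\in[0,1)$, and the tangent field $w^{(s)}_i:=r_i\circ(T^{(s)}_i)^{-1}-\id$ satisfies both $w^{(s)}_i(I^{(s)}(x_i,y_i))=y_i-x_i$ and the admissibility condition $w^{(s)}_i\in Y_i(\rho^{(s)}_i,\A_i)$, because $r_i\circ(T^{(s)}_i)^{-1}$ is the optimal map from $\rho^{(s)}_i$ to $\rhoone_i\in\A_i$. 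Applying the local inequality from the statement at $(\rho^{(s)},w^{(s)})$ and pushing each integral back through $I^{(s)}$ yields exactly the integrand of the geodesic form at this $s$; integrating over $s\in[0,1]$ produces the geodesic form, and Lemma~\ref{lem:second_order_monotonicity_v_geodesic} concludes. For plans not supported on maps, I would approximate $\gamma$ by plans induced by Brenier maps (e.g.\ by slightly regularizing the endpoints), apply the map-case argument to each approximant, and pass to the limit using Assumption~\ref{assump:continuity_v}.

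The most delicate step is the non-map case of $(\Leftarrow)$: I need to ensure the approximating velocity fields really live in $Y_i(\rho^{(s)}_i,\A_i)$ (not just in some larger tangent cone) and that the $L^2$-closure appearing in the definition of $Y_i$ is compatible with the $s$-integration. Assumption~\ref{assump:tangent_space} is exactly tailored to admissibility of the test vectors produced along the geodesic, and Assumption~\ref{assump:continuity_v} provides the continuity of $J_1[\cdot]$, $J_2[\cdot]$ along the $\Wbar$-convergences triggered by both $\eps\to 0$ in $(\Rightarrow)$ and the density approximations in $(\Leftarrow)$.
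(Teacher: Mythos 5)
Your forward direction is essentially the paper's proof: the paper fixes $\rho\in\A$ and $w_i\in Y_i(\rho_i,\A_i)$, writes $\rho^{(\tau)}=(\id+\tau w/c)_\sharp\rho$, applies \cref{lem:second_order_monotonicity_v_geodesic} to the pair $(\rho,\rho^{(\tau)})$ with the optimal plan $(\id,\id+\tau w/c)_\sharp\rho$, pulls out the factor $\tau^2/c^2$, and sends $\tau\to 0$ using \cref{assump:continuity_v} --- exactly your $\eps$-rescaling argument, with the general $w_i\in Y_i$ handled (implicitly) through the $L^2$-closure just as you propose. Where you go beyond the paper is the converse: the paper's written proof derives only the local inequality from monotonicity and does not spell out the reverse implication. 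Your map-case argument for the converse is sound in spirit, though note that $r_i\circ(T_i^{(s)})^{-1}-\id$ evaluates to $(1-s)(y_i-x_i)$ at $I^{(s)}(x_i,y_i)$, not $y_i-x_i$ as you claim; since both sides of the local inequality are $2$-homogeneous in $w$ and the definition of $Y_i$ permits an arbitrary positive scaling, this factor cancels and is harmless. The genuine weak point is your treatment of plans not induced by maps: ``regularizing the endpoints'' may leave $\A$ when $\A\subsetneq\Pbar$, and \cref{assump:continuity_v} is tailored to perturbations of the specific form $x\mapsto x+\eps w(x)$ over a fixed base measure, so it does not by itself license passing to the limit along an arbitrary approximating sequence of geodesics. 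The cleaner route avoids approximation altogether: for every $s\in(0,1)$ the interpolation/restriction property of optimal plans (see e.g.\ \cite[Lemma 7.2.1]{ags}) guarantees that $\gamma$ is concentrated on the graph of a map over $\rho^{(s)}$, so $x-y$ is a well-defined function of $I^{(s)}(x,y)$ and your map-case computation applies verbatim for a.e.\ $s$; integrating in $s$ and invoking \cref{lem:second_order_monotonicity_v_geodesic} then closes the argument.
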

\begin{proof}
    We will show this local notion of monotonicity using ideas from the proof of \cite[Proposition 16.2]{Villani07}. Fix $\rho\in\A$ and a set of velocity fields $\{w_i\in Y_i(\rho_i,\A_i)\}_{i=1}^n$. Since $w_i\in Y_i(\rho_i,\A_i)$ by \cref{assump:tangent_space},
\begin{align*}
\rhot_i:=(w_i/c + \id)_\sharp \rho_i \in \A_i \quad \text{and} \quad
    (\id,w_i/c+ \id)_\sharp \rho_i \in \Gamma^*(\rho_i, \rhot_i) 
\end{align*}
for some $c>0$.  The geodesic interpolant between $\rho_i$ and $\rhot_i$ is given by
\begin{align*}
    \rhotau_i := [(1-\tau)\id + \tau(w_i/c + \id) ]_\sharp \rho_i = [\id + \tau w_i /c]_\sharp \rho_i\,,\qquad \tau\in[0,1] \,.
\end{align*}
Since both $\rho_i,\rhot_i\in \A_i$, we can use \cref{lem:second_order_monotonicity_v_geodesic} applied to $\rho$ and $\rhotau$.
 Noting that $\gamma_i^{(\tau)} := (\id,\id +\tau w_i / c)_\sharp \rho_i$ is an optimal plan between $\rho_i$ and $\rhotau_i$, and $I^{(s)}(x,x+\tau w(x)/c)=x+s \tau w(x)/c$, we have that the geodesic at $s\in [0,1]$ between $\rho_i$ and $\rhotau_i$ is given by $\rho^{(s\tau)}$, and
\begin{align*}
    &-\int_0^1\bigg(\iint \<x-y,J_2[\rho^{(s \tau)}](I^{(s)}(x,y),I^{(s)}(\hat x,\hat y))\, (\hat x-\hat y) >\d \gamma^{(\tau)}(x,y) \d \gamma^{(\tau)}(\hat x,\hat y)  \\
    &\qquad 
    +\int\<x-y,J_1[\rho^{(s\tau)}](I^{(s)}(x,y))\, (x-y) > \d \gamma^{(\tau)}(x,y)\bigg) \d s \\
    & =-\frac{\tau^2}{c^2} \int_0^1  \bigg(\iint \<w(x),J_2[\rho^{(s \tau)}](x+s \tau w(x)/c,\hat x+s \tau w(\hat x)/c)\cdot w(\hat x) >\d \rho(x) \d \rho(\hat x)  \\
    & \qquad  +\frac{\tau^2}{c^2}\int\<w(x),J_1[\rho^{(s\tau)}](x+s \tau w(x)/c) \cdot w(x) > \d \rho(x) \bigg) \d s \ge \lambda \frac{\tau^2}{c^2} \int \norm{w(x)}^2 \d \rho(x)\,.
\end{align*}
Since $\A$ is convex, $\rho^{(\tau)}\in\A$ for all $\tau\in[0,1]$, and the monotonicity condition holds as $\tau \rightarrow 0$. Thanks to the continuity properties in \cref{assump:continuity_v}, dividing both sides by $\frac{\tau^2}{c^2}$ and taking the limit as $\tau \rightarrow 0$ gives
\begin{align*}
    &-\iint \< w(x),J_2[\rho](x,\hat x)\,  w(\hat x) >\d \rho(x) \d \rho(\hat x) 
       -\int\< w(x),J_1[\rho](x)\, w(x) > \d \rho(x)
        \ge \lambda \int \norm{w(x)}^2 \d \rho(x)\,.
\end{align*}
\end{proof}

\begin{proof}[Proof of \cref{prop:monotonicity_implication}]
\sloppy The proof follows by applying \cref{lem:second_order_monotonicity_v_geodesic} and \cref{lem:second_order_monotonicity_v} with $v_i[\rho]=-\nabla_{x_i}\delta_{\rho_i}F_i[\rho]$.
\end{proof}

\subsection{Monotonicity vs Convexity and Measures vs Vectors}\label{subsec:monotonicity_convexity}
In \cref{tab:convexity_monotonicity_comparison_first_order,tab:convexity_monotonicity_comparison_second_order}, the differences between monotonicity and convexity and between games in Euclidean spaces and games in measure spaces over Euclidean spaces are detailed, where the finite-dimensional games are defined as in \cref{def:finite_dim_monotone}. 
For both convexity and monotonicity, a key difference between finite and infinite dimensions is the role of the Onsager operator's dependence on $\rho$ for the $\Wass_2$ metric. For simplicity, we will illustrate this in the single species setting, the multi-species setting follows similarly. Let $(U,d)$ be a metric space, and let $E:U \to \R\cup \{+\infty\}$ be an energy functional. 
Consider the single species (informal)  gradient flow
\begin{align}\label{eq:general_gf}
    \dot u = -\mathbbm{K}(u) DE(u)\,,
\end{align}
where $\mathbbm K(u):(T_u U)^* \to T_u U$ is the Onsager operator corresponding to the metric $d$, mapping from the dual of the tangent space at $u$ to the tangent space at $u$. $D$ is a differential operator on the energy $E$.  We would like a condition under which the functional $$V_2(x):=\frac{1}{2}\<DE(u),\mathbbm{K}(u) DE(u)>$$ converges exponentially along solutions to \eqref{eq:general_gf} (see the Lyapunov functionals defined in \eqref{eq:lyapunov_functions} for comparison). Taking a time derivative of $V_2$,
\begin{align*}
    \dot V_2(u) &= \frac{1}{2} \<DE(u), \frac{\d}{\d u}\left(\mathbbm{K}(u) DE(u)\right)[\dot u]> +\frac{1}{2}\<\dot u D^2 E(u),\mathbbm{K}(u) DE(u)>  \\
    &= \frac{1}{2} \<DE(u), \frac{\d}{\d u}\left(\mathbbm{K}(u)\right)[\dot u] DE(u)> +\<\dot u D^2 E(u),\mathbbm{K}(u) DE(u)>\,,
\end{align*}
where the last equality above holds if $\mathbbm K(u)$ is a linear operator, and $ \frac{\d}{\d u}\left(\mathbbm{K}(u)\right)[\dot u] DE(u)$ denotes the derivative of $\mathbbm K$ with respect to $u$ in the direction $\dot u$, acting on $DE(u)$. A convexity condition allows to conclude that
\begin{align*}
    \dot V_2(u) \le -\lambda \< DE(u),\mathbbm{K}(u) DE(u)>=-2\lambda V_2(u)\,,
\end{align*}
guaranteeing exponential convergence of $V_2$. For more details on the above general framework for gradient flows, see \cite{mielke_geodesic_2013}.
 Next, we instantiate this general framework for the Euclidean case, where $U=\R^d$ and $d = \norm{\cdot}$, and in the case of measures, selecting $U=\P_2$ and $d=\Wass_2$. In the Euclidean setting, we identify $DE(u)$ as  $\nabla E(u)$ and $D^2E(u)$ as $\nabla^2 E(u)$; in the measure setting, we identify $DE(u)$ as $\delta E(u)$ and $D^2E(u)$ as $\delta^2 E(u)$. We have
\begin{align*}
    &\text{Euclidean:} & \mathbbm{K}(u) &= \id\,,    &\frac{\d}{\d u}\left(\mathbbm{K}(u)\right)[\dot u] &=0\,, & \\
    & \text{Measures:}  & \mathbbm{K}(u) &= -\div{u \nabla \cdot} \,,     &\frac{\d}{\d u}\left(\mathbbm{K}(u)\right)[\dot u]  &= -\div{\dot u \nabla \cdot}\,.&
\end{align*}
In the Euclidean setting,
\begin{align*}
    \dot V_2(u) = -\<\nabla E(u),\nabla^2 E(u) \cdot \nabla E(u)> \le -\lambda \<\nabla E(u),\nabla E(u)> = -2\lambda V_2(u)\,,
\end{align*}
due to the convexity condition  $\nabla^2 E(u) \succeq \lambda \Id_d$ for all $u\in\R^d$. In the measure setting, we have
\begin{align*}
    \dot V_2(u) &= -\frac{1}{2} \< \delta E(u) ,\div{\dot u\nabla \delta  E(u)} > - \<\div{u \nabla \delta E(u)}\delta^2 E(u),\div{u\nabla \delta E(u)}> \\
    &= \frac{1}{2} \int \norm{\nabla \delta E[u](x)}^2 \div{u(x) \nabla \delta E[u](x)} \d x\\
    &- \iint \<\nabla\delta E[u](x), \nabla^2 \delta^2 E[u](x,\hat x) \nabla \delta E[u](\hat x)> \d u(x) \d u(\hat x) \\
    &= -\int \<\nabla \delta E[u](x),\nabla^2 \delta E[u](x) \nabla \delta E[u](x)> \d u(x) \\
    &\quad - \iint \<\nabla\delta E[u](x), \nabla^2 \delta^2 E[u](x,\hat x) \nabla \delta E[u](\hat x)> \d u(x) \d u(\hat x)\,.
\end{align*}
We recognize the above expression as the left-hand side in the second-order convexity condition (single-species monotonicity in \cref{lem:second_order_monotonicity_v}), which results in 
\begin{align*}
    \dot V_2(u) \le -2\lambda V_2(u)\,.
\end{align*}
In summary, since in the Euclidean case $\mathbbm K$ does not depend on $u$, and so $\frac{\d}{\d u}\left(\mathbbm{K}(u)\right)[\dot u] DE(u) =0$ and there is only one Hessian-like term in the second-order monotonicity condition in the finite-dimensional case (see \cref{def:finite_dim_monotone} or \cref{tab:convexity_monotonicity_comparison_first_order}). In contrast, this term is not zero in the measure setting (see \cref{prop:monotonicity_implication} or \cref{tab:convexity_monotonicity_comparison_second_order}) as the derivative of the Onsager operator results in $\int \<\nabla \delta E[u](x),\nabla^2 \delta E[u](x) \nabla \delta E[u](x)> \d u(x)$ in the $\Wass_2$ case. If the Onsager operator was nonlinear in the object on which it acts (for instance when there is a cosh structure \cite{peletier_cosh_2023} instead of a quadratic dissipation structure), then there could be three Hessian-like terms, with the third coming from the derivative of $\mathbbm K$ with respect to $DE$.
\begin{table}[h]
    \centering
    \caption*{Monotonicity and Convexity over $\R^d$}
    \begin{tabular}{c|ccc}
        \textcolor{gray}{ $\forall\, x,y\in\R^d$} &  First Order  & & Second Order \\
        \hline
         \rule{0pt}{4ex}  Convexity   & $\< \nabla f(x)-\nabla f(y),x-y> \ge \lambda \norm{x-y}^2$  & & $\frac{1}{2}(\nabla^2 f(x) + \nabla^2 f(x)^\top) \succeq \lambda \Id_d$  \\ 
       \rule{0pt}{4ex} Monotonicity   &  $ \< \overline\nabla V(x)-\overline \nabla V(y), x-y > \ge \lambda \norm{x-y}^2$  & & $ \frac{1}{2}(\overline \nabla^2 V(x)+ \overline \nabla^2 V(x)^\top ) \succeq \lambda  \Id_d$   \\[2ex]
    \end{tabular}
    \caption{(Coupled Gradient Flow Setting) The key difference between monotonicity and convexity is the operator $\overline \nabla$ which is \textit{not} a gradient operator;  it is an entry-wise operator which takes a gradient with respect to different states.}
    \label{tab:convexity_monotonicity_comparison_first_order}
\end{table}
\begin{table}[h]
    \centering
    \caption*{Monotonicity and Convexity over $\P(\R^d)$}
    \begin{tabular}{c|ccc}
        & First Order  \\
         \hline
       \rule{0pt}{4ex} Convexity & $ \int  \left\langle \nabla \delta_{\rho_1} F[\rhozero_1](x)-\nabla \delta_{\rho_1} F[\rhoone_1](y),  x-y\right\rangle\d \gamma_1(x) \ge \lambda \Wass_2(\rhozero_1,\rhoone_1)^2$   \\
       \rule{0pt}{2ex} & \textcolor{gray}{$\forall\, \rhozero_1,\rhoone_1\in \A_1$, $ \forall\, \gamma_1\in\Gamma^*(\rhozero_1,\rhoone_1)$}  \\
      \rule{0pt}{4ex} Monotonicity &   $ \int  \left\langle \ndr F[\rhozero](x)-\ndr F[\rhoone](y), x-y\right\rangle\d \gamma(x,y) \ge \lambda \Wbar(\rhozero,\rhoone)^2$  \\
      \rule{0pt}{3ex}  &  \textcolor{gray}{$\forall\, \rhozero,\rhoone\in \A$,  $ \forall\, \gamma\in\Gamma^*(\rhozero,\rhoone)$} \\
    \end{tabular}
    
    \begin{tabular}{c|ccc}
     \rule{0pt}{4ex}& Second Order \\
        \hline
        \rule{0pt}{4ex} Convexity & $\int (\iint \langle x-y,\nabla^2 \delta^2_{\rho_1} F[\rhos_1](I^{(s)}(x,y),I^{(s)}(\hat x,\hat y)) (\hat x-\hat y)\rangle  \d \gamma_1(x,y) \d \gamma_1(\hat x,\hat y) $  \\
       &  $+ \int\langle x-y,\nabla^2 \delta_{\rho_1} F[\rhos_1](I^{(s)}(x,y)) (x-y) \rangle  \d \gamma_1(x,y)) \d s \ge \lambda \Wbar(\rhozero_1,\rhoone_1)^2$  \\
      \rule{0pt}{3ex}  & \textcolor{gray}{$\forall\, \rhozero_1,\rhoone_1\in\A_1$, $\gamma_1\in\Gamma^*(\rhozero_1,\rhoone_1)$} \\
     \rule{0pt}{4ex}  Monotonicity&  $\int (\iint \langle x-y,\nnddr F[\rhos](I^{(s)}(x,y),I^{(s)}(\hat x,\hat y)) (\hat x-\hat y)\rangle \d \gamma(x,y) \d \gamma(\hat x,\hat y) $ \\
      & $+ \int \langle x-y, \nndr F[\rhos](I^{(s)}(x,y)) (x-y) \rangle \d \gamma(x,y)  )\d s \ge \lambda \Wbar(\rhozero,\rhoone)^2$ \\
     \rule{0pt}{3ex} & \textcolor{gray}{$\forall\, \rhozero,\rhoone\in\A$, $\gamma\in\Gamma^*(\rhozero,\rhoone)$}
     \\[2ex]
    \end{tabular}
    \caption{(Coupled Gradient Flow Setting) Similar to the $\R^d$ setting, the difference between monotonicity and convexity is the entry-wise operators. The second order inequalities have two Hessian-like terms in the measure setting, unlike the $\R^d$ setting.} 
    \label{tab:convexity_monotonicity_comparison_second_order}
\end{table}

\section{Interaction Kernel Computations}\label{sec:interaction_kernel}
Under appropriate conditions we prove that the nonlocal cross-interaction terms appearing in \cref{subsec:monotonicity_cross_interaction} are geodesically displacement convex  \cref{lem:convexity_W}, and provide Hessian lower bounds kernels such as the attractive-repulsive kernel and power law kernel. Next, going beyond the condition in \cref{lem:interaction_kernels}, we show monotonicity under more general criteria in \cref{lem:Cross-Interaction-Kernels_appendix}.
\begin{lemma}[Convexity of $\mathcal W$]\label{lem:convexity_W}
    Consider the functional $\mathcal W$ defined as
    \begin{align*}
        \W_{ij}(\rho_i,\rho_j) = \iint W_{ij}(x_i,x_j) \d \rho_i(x_i) \d \rho_j(x_j) \,.
    \end{align*}
    Let the interaction kernel $W_{ij}\in C^2(\R^{d_i}\times \R^{d_j};\R)$ satisfy $\Hess{W_{ij}} \succeq \lambdat \Id_{d_i+d_j}$.  Then the second derivative of $\mathcal W_{ij}$ along geodesics between $\rhozero$ and $\rhoone$ satisfies
    \begin{align*}
        \ddss \W_{ij}(\rhos_i,\rhos_j) \ge  \lambdat \left(\Wass_2(\rhozero_i,\rhoone_i)^2 + \Wass_2(\rhozero_j,\rhoone_j)^2 \right) \quad \forall\, s\in[0,1]\,.
    \end{align*}
\end{lemma}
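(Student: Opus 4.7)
The plan is to lift the computation of $\ddss \W_{ij}(\rhos_i,\rhos_j)$ to the product space of optimal plans, so that the geodesic structure becomes a pair of straight-line interpolants in Euclidean space and the Hessian bound on $W_{ij}$ can be applied pointwise. Fix optimal plans $\gamma_i\in\Gamma^*(\rhozero_i,\rhoone_i)$ and $\gamma_j\in\Gamma^*(\rhozero_j,\rhoone_j)$ and recall from \cref{def:disp_interpolant} that $\rhos_i=I^{(s)}_\sharp \gamma_i$ and $\rhos_j=I^{(s)}_\sharp \gamma_j$ with $I^{(s)}(x,y)=(1-s)x+sy$. Pushing forward yields
\begin{align*}
    \W_{ij}(\rhos_i,\rhos_j)=\iint W_{ij}\bigl(I^{(s)}(x_i,y_i),\,I^{(s)}(x_j,y_j)\bigr)\,\d\gamma_i(x_i,y_i)\,\d\gamma_j(x_j,y_j)\,.
\end{align*}

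Next I would differentiate twice in $s$ under the integral sign. Setting $z_i(s)=I^{(s)}(x_i,y_i)$ and $z_j(s)=I^{(s)}(x_j,y_j)$, we have $\dot z_i=y_i-x_i$, $\dot z_j=y_j-x_j$, and $\ddot z_i=\ddot z_j=0$, so the chain rule gives
\begin{align*}
    \ddss W_{ij}(z_i(s),z_j(s))=\bigl\langle (y_i-x_i,y_j-x_j),\,\Hess{W_{ij}}(z_i(s),z_j(s))\cdot(y_i-x_i,y_j-x_j)\bigr\rangle\,.
\end{align*}
Applying the assumption $\Hess{W_{ij}}\succeq \lambdat \Id_{d_i+d_j}$ pointwise bounds this integrand below by $\lambdat\bigl(\|y_i-x_i\|^2+\|y_j-x_j\|^2\bigr)$, which is independent of $s$.

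Integrating against $\gamma_i\otimes\gamma_j$ and using that $\gamma_i,\gamma_j$ are optimal, the marginal integrals collapse to Wasserstein distances
\begin{align*}
    \int\|y_i-x_i\|^2\,\d\gamma_i(x_i,y_i)=\Wass_2(\rhozero_i,\rhoone_i)^2\,,\qquad \int\|y_j-x_j\|^2\,\d\gamma_j(x_j,y_j)=\Wass_2(\rhozero_j,\rhoone_j)^2\,,
\end{align*}
which yields the claim.

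The only delicate step is the interchange of $\ddss$ and the double integral. Since $W_{ij}\in C^2$ and the Hessian is globally bounded below, to justify dominated convergence one needs an integrable majorant for $\partial_s W_{ij}(z_i(s),z_j(s))$ and its $s$-derivative on $[0,1]$; this follows from standard growth assumptions implicit in having $\W_{ij}$ finite on $\A_i\times\A_j$ (for instance, a quadratic bound on $\nabla W_{ij}$ together with the second-moment bounds built into $\Pbar$). Under such regularity, the swap is routine and the proof above goes through.
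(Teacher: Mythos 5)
Your proof is correct and follows essentially the same route as the paper's: the paper performs the identical computation in Eulerian form, using the geodesic equations \eqref{eq:geodesic_eqns} to express $\ddss \W_{ij}$ as the quadratic form of $\Hess{W_{ij}}$ against the velocities $\vs_i,\vs_j$ integrated over $\rhos_i\otimes\rhos_j$, which is exactly the pushforward of your Lagrangian expression under $I^{(s)}_\sharp$. The final steps — applying the pointwise Hessian bound and identifying $\int\|x-y\|^2\,\d\gamma_i$ with $\Wass_2(\rhozero_i,\rhoone_i)^2$ via optimality (equivalently, the constant-speed property of the geodesic) — coincide, and your remark on differentiating under the integral sign is at least as careful as the paper's treatment.
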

\begin{proof}
    Select $\rhozero,\rhoone\in\A$ and consider a geodesic $(\rhos,\vs)$ between them, solving \eqref{eq:geodesic_eqns}.
    The first derivative is given by
    \begin{align*}
        \dds \W_{ij}(\rhos_i,\rhos_j) &=\int \<\nabla_{x_i} \delta_{\rho_i} \W_{ij}[\rhos_i,\rhos_j](x_i), \vs_i(x_i)> \d \rhos_i(x_i) \\ 
        &\quad+  \int \<\nabla_{x_j} \delta_{\rho_j} \W_{ij}[\rhos_i,\rhos_j](x_j) , \vs_j(x_j)> \d \rhos_j(x_j) \\
        & = \iint \<\begin{bmatrix}
             \nabla_{x_i} W_{ij}(x_i,x_j) \\ \nabla_{x_j} W_{ij}(x_i,x_j)
        \end{bmatrix},
        \begin{bmatrix}
            \vs_i(x_i) \\  \vs_j(x_j)
        \end{bmatrix}> \d \rhos_{i}(x_{i})\d \rhos_j (x_j) \,.
    \end{align*}
    Differentiating again, we have
    \begin{align*}
        \ddss \W_{ij}(\rhos_i,\rhos_j) = \iint \begin{bmatrix}
            \vs_i(x_i) \\ \vs_j(x_j)
        \end{bmatrix}^\top
         H(x_i,x_j) 
        \begin{bmatrix}
           \vs_i(x_i) \\ \vs_j(x_j)
        \end{bmatrix}
        \d \rhos_i(x_i) \d \rhos_j(x_j)
    \end{align*}
    where
    \begin{align*}
    H(x_i,x_j) = \Hess{W_{ij}} \coloneqq
    \begin{bmatrix}
            \nabla_{x_i}^2 W(x_i,x_j)  & \nabla_{x_j,x_i}^2 W_{ij}(x_i,x_j) \\
            \nabla_{x_i,x_j}^2 W(x_i,x_j) & \nabla_{x_j}^2 W_{ij}(x_i,x_j)
        \end{bmatrix}\,.
    \end{align*}
\sloppy    The Hessian $H$ is well-defined because $W\in C^2$. Under the condition for the Hessian $\Hess{W}\succeq \lambdat \Id_{d_i+d_j}$, we have that 
    \begin{align*}
        \ddss \W_{ij}(\rhos_i,\rhos_j) &\ge \lambdat \iint 
        \norm{\begin{bmatrix}
            \vs_i(x_i) \\ \vs_j(x_j)
        \end{bmatrix}}^2
        \d \rhos_i(x_i) \d \rhos_j(x_j)\\
        &= \lambdat \big( \Wass_2(\rhozero_i,\rhoone_i)^2  + \Wass_2(\rhozero_j,\rhoone_j)^2 \big)\,.
    \end{align*}
\end{proof}
In the setting where $\R^{d_i}=\R^{d_j}$ and $x=x_i-x_j$, we compute the lower bounds for the Hessians of $W^{(k)}$, $W^{(m)}$, and $W^{(p)}$, given by
\begin{align*}
    W^{(k)}(x) &= \frac{\norm{x}^k}{k}\,, \quad W^{(m)}(x) = C_r \exp\bigg(-\frac{\norm{x}^2}{l_r}\bigg) - C_a \exp\bigg(-\frac{\norm{x}^2}{l_a}\bigg)\,, \\
    W^{(p)}(x) &= \frac{\norm{x}^a}{a} - \frac{\norm{x}^b}{b} \,,
\end{align*}
where $l_a>l_r>0$, $C_r>C_a>0$ and $C_r/C_a < (l_r / l_a)^{-d_i}$ and $a>b$ with $a,b$ even integers. These lower bounds allow us to apply \cref{lem:interaction_kernels} to systems with interaction kernels of this form.
The kernel $W^{(k)}=\frac{\norm{x}^k}{k}$ has a Hessian given by $\Hess{W^{(k)}} \succeq 0 \Id_{d_i}$ for integers $k\ge 1$. The Hessian of the Morse-like potential $W^{(m)}$ is more involved; the gradient is given by
\begin{align*}
    \nabla W^{(m)}(x) = 2x \left(- \frac{C_r}{l_r} \exp(-\norm{x}^2/l_r) + \frac{C_a}{l_a} \exp(-\norm{x}^2/l_a) \right)\,,
\end{align*}
and the Hessian is 
\begin{align*}
    \Hess{W^{(m)}} &= 2\Id_{d_i}\left( - \frac{C_r}{l_r} \exp(-\norm{x}^2/l_r) + \frac{C_a}{l_a} \exp(-\norm{x}^2/l_a )\right) \\
    &\quad + 4x x^\top\left(\frac{C_r}{l_r^2} \exp(-\norm{x}^2/l_r) -  \frac{C_a}{l_a^2} \exp(-\norm{x}^2/l_a )\right)\,.
\end{align*}
The smallest eigenvalues of each matrix are given by
\begin{align*}
    \lambda_{m1} &\coloneqq 2\inf_x \left(- \frac{C_r}{l_r} \exp(-\norm{x}^2/l_r) + \frac{C_a}{l_a} \exp(-\norm{x}^2/l_a ) \right)\\
    \lambda_{m2} &\coloneqq 4 \min \left\{ \inf_x \left(\norm{x}^2 \left(\frac{C_r}{l_r^2} \exp(-\norm{x}^2/l_r) -  \frac{C_a}{l_a^2} \exp(-\norm{x}^2/l_a )\right)\right),0 \right\} \,,
\end{align*}
and since $l_a,l_r >0$, the smallest eigenvalues satisfy $\lambda_{m1},\lambda_{m2}>-\infty$ and so there exists $\lambdat = \lambda_{m1} + \lambda_{m2}$ such that $\Hess{W^{(m)}} \succeq \lambdat \Id_{d_i}$.

For the third interaction kernel $W^{(p)}$, the Hessian is given by
\begin{align*}
    \Hess{W^{(p)}} =
         \left((a-2)\norm{x}^{a-4}-(b-2)\norm{x}^{b-4}\right)  xx^\top +(\norm{x}^{a-2} -\norm{x}^{b-2})\Id_{d_i} \,.
\end{align*}
Because $a$ is even and $a>b$, the Hessian has a uniform lower bound on the smallest eigenvalue for fixed $a,b$. When $b=2$, $\Hess{W^{(p)}} \succeq -\Id_{d_i}$. When $b>2$, both $(a-2)\norm{x}^{a-4}-(b-2)\norm{x}^{b-4}$ and $\norm{x}^{a-2} -\norm{x}^{b-2}$ have uniform lower-bounds because $a>b$, and therefore $\Hess{W^{(p)}}$ has a finite minimum eigenvalue $\lambdat$. See \cite{balague_nonlocal_2013} for more details on the estimates for $W^{(p)}$. 
Below, we present a general condition on interaction kernels that results in monotonicity.
\begin{lemma}[Cross-Interaction Kernels]\label{lem:Cross-Interaction-Kernels_appendix}
    Consider the interaction term $\W_{ij}:\A_{i} \times \A_{j} \to \R$, parameterized by the interaction kernel $W_{ij}\in C^2(\R^{d_{i}}\times \R^{d_{j}},\R)$, given by
    \begin{align*}
        \W_{ij}(\rho_{i},\rho_{j}) = \iint W_{ij}(x_{i}, x_{j}) \d\rho_{i}(x_{i}) \d\rho_{j}(x_{j})\,, 
    \end{align*}
Let $F_i=\sum_{j=1}^n \W_{ij}$ for all $\iin$. When $\Hess{W_{ij}}\succeq \lambda_{ij} \Id_{d_{i}+d_{j}}$ and 
\begin{align}\label{eq:interaction_kernel_inequality}
\begin{split}
    \alpha\sum_{i,j=1}^n &\< \nabla_{x_i}[ W_{ij}(x_i,x_j)+W_{ji}(x_j,x_i) ]- \nabla_{x_i} [ W_{ij}(y_i,y_j)+W_{ji}(y_j,y_i)], 
            x_i-y_i > \\
            &\le  \sum_{i,j=1}^n \< \nabla_{x_i} W_{ij}(x_i,x_j)- \nabla_{x_i} W_{ij}(y_i,y_j), 
            x_i-y_i > \quad \forall\, x_i,y_i\in\R^{d_i}\,,\, x_j,y_j\in\R^{d_j}\,,
\end{split}
\end{align}
then $F$ is $\alpha\lambda$-monotone in $\A$, where $\lambda_i=\sum_{j=1}^n \lambda_{ij}+\lambda_{ji}$ and $\lambda=\min_i\lambda_i$. 
\end{lemma}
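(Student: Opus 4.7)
}

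The plan is to reduce the claimed monotonicity inequality to a pointwise statement on the integrand, then exploit the joint convexity of each kernel $W_{ij}$ in $(x_i,x_j)$ provided by the Hessian lower bound $\Hess{W_{ij}}\succeq \lambda_{ij}\Id_{d_i+d_j}$.

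First, I would compute the first variation of $F_i[\rho]=\sum_{j=1}^n \iint W_{ij}(x_i,x_j)\d\rho_i(x_i)\d\rho_j(x_j)$, which yields
\begin{align*}
\nabla_{x_i}\delta_{\rho_i}F_i[\rho](x_i) = \sum_{j=1}^n \int \nabla_{x_i} W_{ij}(x_i,x_j)\d\rho_j(x_j),
\end{align*}
(with an additional symmetrization contribution from the diagonal $j=i$ term, treated analogously). Choosing $\gamma_i \in \Gamma^*_i(\rhozero_i,\rhoone_i)$ and forming $\gamma=\gamma_1\otimes\cdots\otimes\gamma_n \in \Gamma^*(\rhozero,\rhoone)$, and using that the $x$- and $y$-marginals of $\gamma_j$ are $\rhozero_j$ and $\rhoone_j$ respectively, the LHS of the target monotonicity inequality rewrites as
\begin{align*}
\int \sum_{i,j=1}^n \<x_i-y_i,\, \nabla_{x_i} W_{ij}(x_i,x_j) - \nabla_{x_i} W_{ij}(y_i,y_j)> \d\gamma(x,y).
\end{align*}

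Second, I would apply the pointwise hypothesis \eqref{eq:interaction_kernel_inequality} under the integral to bound this below by
\begin{align*}
\alpha \int \sum_{i,j=1}^n \<\nabla_{x_i}[W_{ij}(x_i,x_j)+W_{ji}(x_j,x_i)] - \nabla_{x_i}[W_{ij}(y_i,y_j)+W_{ji}(y_j,y_i)],\, x_i-y_i> \d\gamma.
\end{align*}
The key algebraic step is to split this double sum and relabel $i\leftrightarrow j$ in the portion coming from the $W_{ji}(x_j,x_i)$ terms. Since $\nabla_{x_i} W_{ji}(x_j,x_i)$ denotes the derivative of $W_{ji}$ with respect to its second argument, after relabeling this becomes $\<\nabla_{x_j} W_{ij}(x_i,x_j) - \nabla_{x_j} W_{ij}(y_i,y_j),\, x_j-y_j>$. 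Combining the two halves reconstructs the full gradient of $W_{ij}$ in both variables, so the integrand becomes
\begin{align*}
\alpha \sum_{i,j=1}^n \<\nabla_{(x_i,x_j)} W_{ij}(x_i,x_j) - \nabla_{(x_i,x_j)} W_{ij}(y_i,y_j),\, (x_i-y_i,\, x_j-y_j)>.
\end{align*}

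Third, I would invoke joint convexity: the Hessian bound $\Hess{W_{ij}}\succeq \lambda_{ij}\Id_{d_i+d_j}$ is equivalent (via Taylor expansion as in the proof of \cref{lem:interaction_kernels}) to the first-order inequality
\begin{align*}
\<\nabla W_{ij}(x_i,x_j) - \nabla W_{ij}(y_i,y_j),\, (x_i-y_i,\, x_j-y_j)> \ge \lambda_{ij}\bigl(\norm{x_i-y_i}^2 + \norm{x_j-y_j}^2\bigr).
\end{align*}
Summing this over all pairs $(i,j)$ and reorganizing by the species index $k$, the coefficient of $\norm{x_k-y_k}^2$ is exactly $\sum_{j}\lambda_{kj} + \sum_{j}\lambda_{jk} = \lambda_k$. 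Bounding each $\lambda_k$ from below by $\lambda=\min_k \lambda_k$ gives the pointwise bound $\alpha\lambda\norm{x-y}^2$, whose $\gamma$-integral equals $\alpha\lambda\Wbar(\rhozero,\rhoone)^2$, as required.

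\paragraph{Expected main obstacle}
The principal subtlety lies in carefully tracking the relabeling $i\leftrightarrow j$ and interpreting $\nabla_{x_i}W_{ji}(x_j,x_i)$ as the derivative of $W_{ji}$ in its second argument; this bookkeeping is what allows the asymmetric integrand coming from $\ndr F$ to be packaged into the jointly convex quantity $\<\nabla_{(x_i,x_j)} W_{ij}, (x_i-y_i,x_j-y_j)>$. A secondary technicality is the handling of diagonal self-interaction terms $\W_{ii}$: since $\rho_i$ appears twice, the first variation contributes the symmetrized gradient $\int \nabla_{x_i}[W_{ii}(x_i,x_i')+W_{ii}(x_i',x_i)]\d\rho_i(x_i')$, which forces introducing a second optimal coupling $\tilde\gamma_i\in\Gamma^*_i(\rhozero_i,\rhoone_i)$ and using the joint Hessian bound on $W_{ii}$ to recover the correct contribution to $\lambda_i=\sum_j(\lambda_{ij}+\lambda_{ji})$.
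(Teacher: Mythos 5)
Your proposal is correct and follows essentially the same route as the paper: both reduce the claim to the symmetrized, jointly convex quantity $\langle \nabla_{(x_i,x_j)}W_{ij}(x_i,x_j)-\nabla_{(x_i,x_j)}W_{ij}(y_i,y_j),(x_i-y_i,x_j-y_j)\rangle$, lower-bound it by $\lambda_{ij}(\norm{x_i-y_i}^2+\norm{x_j-y_j}^2)$ using the Hessian bound, sum and reorganize to get $\lambda_k=\sum_j(\lambda_{kj}+\lambda_{jk})$, and transfer the bound to the asymmetric integrand via \eqref{eq:interaction_kernel_inequality}. The only (cosmetic) difference is that you obtain the first-order convexity inequality pointwise from $\Hess{W_{ij}}\succeq\lambda_{ij}\Id_{d_i+d_j}$, whereas the paper derives it by Taylor-expanding $\W_{ij}$ along Wasserstein geodesics using \cref{lem:convexity_W}; your observation about the symmetrization needed for the diagonal terms $\W_{ii}$ is a fair point that the paper's own proof glosses over.
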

\begin{proof}
     Using the exact form of the Taylor expansion for $\W_{ij}$, and the bound for the second derivative from \cref{lem:convexity_W}, $\ddss \W_{ij}(\rhos_i,\rhos_j) \ge \lambda_{ij}(\Wass_2(\rhozero_i,\rhoone_i)^2 + \Wass_2(\rhozero_j,\rhoone_j)^2)$, we have along geodesics between $(\rhozero_i,\rhozero_j)$ and $(\rhoone_i,\rhoone_j)$,
    \begin{align*}
        \W_{ij}(\rhoone_i,\rhoone_j) &= \W_{ij}(\rhozero_i,\rhozero_j) + \left. \dds \W_{ij}(\rhos_i,\rhos_j) \right|_{s=0} \\
        &\qquad+  \int_0^1 (1-t) \left. \left( \ddss \W_{ij}(\rhos_i,\rhos_j) \right)\right|_{s=t} \d t \\
        & \ge \W_{ij}(\rhozero_i,\rhozero_j) + \left. \dds \W_{ij}(\rhos_i,\rhos_j) \right|_{s=0} + \frac{\lambda_{ij}}{2}\sum_{k=i,j}\Wass_2(\rhozero_k,\rhoone_k)^2\,, \\
        \W_{ij}(\rhozero_i,\rhozero_j) & = \W_{ij}(\rhoone_i,\rhoone_j) - \left. \dds \W_{ij}(\rhos_i,\rhos_j) \right|_{s=1} -  \int_1^0 t \left. \left( \ddss \W_{ij}(\rhos_i,\rhos_j) \right)\right|_{s=t} \d t \\
        &\ge \W_{ij}(\rhoone_i,\rhoone_j) - \left. \dds \W_{ij}(\rhos_i,\rhos_j) \right|_{s=1} + \frac{\lambda_{ij}}{2}\sum_{k=i,j}\Wass_2(\rhozero_k,\rhoone_k)^2 \,.
    \end{align*}
    The term $\dds \W_{ij}(\rhos_i,\rhos_j)$, using geodesics \eqref{eq:geodesic_eqns}, is given by
    \begin{align*}
        \dds \W_{ij}(\rhos_i,\rhos_j) &= \int \bigg(\<\nabla_{x_i} W_{ij}(x_i,x_j),\ws_i(x_i)> \\
        &\qquad+ \<\nabla_{x_j} W_{ij}(x_i,x_j),\ws_j(x_j)> \bigg)\d \rhos_i(x_i) \d \rhos_j(x_j) \,.
    \end{align*}
    Recall $\rhos_i=(I_i^{(s)})_\sharp \gamma_i$ where $I_i^{(s)}(x_i,y_i)=(1-s)x_i+sy_i$, and that $\ws_i(I^{(s)}_i(x_i,y_i))=y_i-x_i$. Evaluating $\dds \W_{ij}(\rhos_i,\rhos_j)$ at $s=0$ and $s=1$,
    \begin{align*}
        \dds \W_{ij}(\rhos_i,\rhos_j)\bigg|_{s=0} &= \iint \<\begin{bmatrix}
             \nabla_{x_i} W_{ij}(x_i,x_j) \\ \nabla_{x_j} W_{ij}(x_i,x_j)
        \end{bmatrix},
        \begin{bmatrix}
           y_i-x_i \\  y_j-x_j
        \end{bmatrix}> \d \gamma_{i}(x_{i},y_i)\d \gamma_j (x_j,y_j)  \\
         \dds \W_{ij}(\rhos_i,\rhos_j)\bigg|_{s=1} &= \iint \<\begin{bmatrix}
             \nabla_{x_i} W_{ij}(y_i,y_j) \\ \nabla_{x_j} W_{ij}(y_i,y_j)
        \end{bmatrix},
        \begin{bmatrix}
            y_i-x_i \\  y_j-x_j
        \end{bmatrix}> \d \gamma_{i}(x_{i},y_i)\d \gamma_j (x_j,y_j) \,.
    \end{align*}
    Adding the two Taylor expansion inequalities results in
    \begin{align}\label{eq:monotonicity_inequality_interaction_kernels}
        \iint \<\begin{bmatrix}
            \nabla_{x_i} W_{ij}(x_i,x_j) - \nabla_{x_i} W_{ij}(y_i,y_j) \\ \nabla_{x_j} W_{ij}(x_i,x_j)-\nabla_{x_j} W_{ij}(y_i,y_j)
        \end{bmatrix}, \begin{bmatrix}
            x_i-y_i \\  x_j-y_j
        \end{bmatrix}>\d \gamma_i(x_i,y_i) \d\gamma_j(x_j,y_j) \qquad
        \\
        \ge \lambda_{ij} \sum_{k=i,j}\Wass_2(\rhozero_k,\rhoone_k)^2\,.
    \end{align}
    Summing \eqref{eq:monotonicity_inequality_interaction_kernels} over all $i,j\in[1,\dots,n]$,
    \begin{align*}
      \sum_{i,j=1}^n  \iint \<
            \nabla_{x_i}[ W_{ij}(x_i,x_j)+W_{ji}(x_j,x_i) ]- \nabla_{x_i} [ W_{ij}(y_i,y_j)+W_{ji}(y_j,y_i)], 
            x_i-y_i > \d \gamma(x,y) \qquad \\
            \ge \lambda  \Wbar(\rhozero,\rhoone)^2\,,
    \end{align*}
    where $\lambda_i=\sum_{j=1}^n \lambda_{ij}+\lambda_{ji}$
    and $\lambda=\min_i\lambda_i$. Then multiplying both sides by $\alpha$ and applying \eqref{eq:interaction_kernel_inequality},
    \begin{align*}
       \sum_{i,j=1}^n \iint \< \nabla_{x_i} W_{ij}(x_i,x_j)- \nabla_{x_i} W_{ij}(y_i,y_j), 
            x_i-y_i >\d \gamma(x,y) \ge \alpha \lambda \Wbar(\rhozero,\rhoone)^2\,.
    \end{align*}
    Therefore $F$ is $\alpha\lambda$-monotone in $\A$.
\end{proof}

\section{Optimal and Admissible Plans in the Monotonicity Inequality}\label{sec:optimal_admissible_plans}
In this section, we prove the equivalence between the monotonicity condition using optimal plans and using admissible plans, in the setting where $v$ is monotone over $\Pbar$.
The proofs use techniques from \cite{cavagnari_lagrangian_2023}, where the authors first prove the equivalence for plans which have finite support, and then extend this result to any probability measure with finite second moment.

Recall that $\Gamma(\rhozero_i,\rhoone_i)$ is the set of all plans with marginals $\rhozero_i,\rhoone_i$, and $\Gamma^*(\rhozero_i,\rhoone_i)\subset\Gamma(\rhozero_i,\rhoone_i)$ is the set of optimal plans with marginals $\rhozero_i,\rhoone_i$. Let $\Pf(\R^{d_i})\subset \P_2(\R^{d_i})$ be the set of measures on $\R^{d_i}$ with finite support, defined as
\begin{align*}
    \P^{(f)} = \big\{\rho_i\in\P_2(\R^{d_i}) \ \big| \ \supp \rho_i = \{ x_i^{(1)}, \dots, x_i^{(M_i)}  \}\,, \ M_i<\infty \big\} \,,
\end{align*}
and let $\Pfbar \coloneqq \Pf(\R^{d_1}) \times \dots \times \Pf(\R^{d_n})$. We define $I^{(t)}_i(x_i,y_i) = (1-t)x_i+ty_i $.
\begin{theorem}[Local optimality of discrete interpolations (Theorem 6.2, \cite{cavagnari_lagrangian_2023})]\label{thm:local_optimality}
    Let $\rhozero_i,\rhoone_i\in\Pf(\R^{d_i})$,  select any $\gamma_i\in\Gamma(\rhozero_i,\rhoone_i)$, and define $\rhotau_i:=(I_i^{(\tau)})_{\sharp} \gamma_i$. Then there exists a finite number of points $\tau^{(i)}_0=0<\tau_1^{(i)}< \dots < \tau_{K_i}^{(i)}=1$ such that for every $k=1,\dots,K_i$, $\rhotau_i$ for $\tau\in[\tau_{k-1}^{(i)},\tau_k^{(i)}]$ is a minimal constant-speed geodesic and 
    \begin{align*}
        \Wass_2^2(\rho_i^{(t_1)},\rho_i^{(t_2)}) = |t_1-t_2|^2 \int \norm{x-y}^2 \d \gamma_i(x,y)\,, \quad \forall\, t_1,t_2\in[\tau_{k-1}^{(i)},\tau_k^{(i)}] \,.
    \end{align*}
\end{theorem}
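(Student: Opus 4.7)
The plan is to exploit the discrete structure: since $\rhozero_i,\rhoone_i\in\Pf(\R^{d_i})$, the plan $\gamma_i$ is supported on finitely many pairs, so $\rhotau_i$ is a finite sum of Diracs whose atoms move along affine trajectories. First I would write $\gamma_i=\sum_{j=1}^N \alpha_j\delta_{(\hat x_j,\hat y_j)}$ with $\alpha_j>0$, so that $\rho_i^{(\tau)}=\sum_{j=1}^N \alpha_j\delta_{p_j(\tau)}$ with $p_j(\tau)\coloneqq(1-\tau)\hat x_j+\tau\hat y_j$. For any $t_1,t_2\in[0,1]$ the natural coupling $\pi^{t_1,t_2}\coloneqq (I_i^{(t_1)},I_i^{(t_2)})_\sharp\gamma_i$ is admissible between $\rho_i^{(t_1)}$ and $\rho_i^{(t_2)}$, and a direct computation gives its quadratic cost as $(t_2-t_1)^2\int\|x-y\|^2\d\gamma_i$. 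This yields the upper bound
\begin{align*}
\Wass_2^2(\rho_i^{(t_1)},\rho_i^{(t_2)})\le (t_2-t_1)^2\int\|x-y\|^2\d\gamma_i\,,
\end{align*}
with equality iff $\pi^{t_1,t_2}$ is optimal.

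Next I would recast the claim as a statement about when $\pi^{t_1,t_2}$ is optimal, using that equality above is equivalent to the curve $\rhotau_i$ being a minimal constant-speed geodesic on $[t_1,t_2]$. By the standard characterization of quadratic optimal transport, $\pi^{t_1,t_2}$ is optimal iff its (finite) support is $c$-cyclically monotone, i.e.\ for every cycle $(j_1,\ldots,j_k)$ in $\{1,\ldots,N\}$
\begin{align*}
\sum_{m=1}^k\|p_{j_m}(t_1)-p_{j_m}(t_2)\|^2\le\sum_{m=1}^k\|p_{j_m}(t_1)-p_{j_{m+1}}(t_2)\|^2\,.
\end{align*}
Since each $p_j$ is affine in $\tau$, each such inequality is a polynomial inequality in $(t_1,t_2)$, trivially satisfied (as equality) on the diagonal $t_1=t_2$. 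A first-order Taylor expansion in $\epsilon=t_2-t_1$ shows that these inequalities hold with strict slack for $|t_2-t_1|$ small, so optimality persists in a neighborhood of the diagonal.

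The core task is to partition $[0,1]$ into finitely many subintervals on which the above cyclical inequalities hold for every $t_1,t_2$ in the subinterval. Only finitely many cycles (of length at most $N$) among the $N$ support points must be checked, producing a finite family of polynomial inequalities whose joint validity defines a semialgebraic subset of $[0,1]^2$. I would then let $\{\tau_k\}_{k=0}^{K_i}$ be the ordered set of times at which some inequality first degenerates (i.e.\ at which the induced optimal-plan combinatorics of $\rhotau_i$ changes); finiteness of this set follows from the fact that the $p_j$'s are affine and each polynomial has finitely many roots. On each subinterval $[\tau_{k-1},\tau_k]$, every $c$-cyclical monotonicity inequality holds for all pairs $t_1,t_2$ in the interval, hence $\pi^{t_1,t_2}$ is optimal and $\Wass_2^2(\rho_i^{(t_1)},\rho_i^{(t_2)})=(t_2-t_1)^2\int\|x-y\|^2\d\gamma_i$, giving the claimed minimal constant-speed geodesic segment.

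The main obstacle is the combinatorial bookkeeping in the last step: one must argue that the transition times for distinct cycles combine into a discrete set (so that $K_i<\infty$), and that the relevant comparisons reduce to a \emph{finite} collection of polynomial inequalities despite \emph{a priori} having to test all orderings of the $N$ atoms. This reduction uses that $c$-cyclical monotonicity for a finite support set is equivalent to $c$-monotonicity of a specific finite graph (bounded number of cycles), and that affine trajectories guarantee only finitely many sign changes per polynomial. Continuity of $\tau\mapsto\rhotau_i$ in the narrow topology together with closedness of the set of optimal plans then ensures that the geodesic property extends to the endpoints $\tau_k$.
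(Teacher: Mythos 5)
First, a remark on scope: the paper does not prove this statement — it is imported verbatim as Theorem 6.2 of \cite{cavagnari_lagrangian_2023} — so there is no in‑paper argument to compare against, and your proposal must stand on its own. Your opening reduction is the right one: writing $\gamma_i=\sum_j\alpha_j\delta_{(\hat x_j,\hat y_j)}$, bounding $\Wass_2^2(\rho_i^{(t_1)},\rho_i^{(t_2)})$ by the cost $(t_2-t_1)^2\int\norm{x-y}^2\d\gamma_i$ of $\pi^{t_1,t_2}=(I_i^{(t_1)},I_i^{(t_2)})_\sharp\gamma_i$, and characterizing equality by $c$-cyclical monotonicity of the finitely many support pairs, which gives finitely many polynomial constraints $\Phi_C(t_1,t_2)\ge 0$ indexed by cycles $C$.

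The construction of the partition, however, has genuine gaps. (a) The claim that the inequalities "hold with strict slack for $|t_2-t_1|$ small, so optimality persists in a neighborhood of the diagonal" is false at collision times. Take $d_i=1$ and $\gamma_i=\tfrac12\delta_{(0,1)}+\tfrac12\delta_{(1,0)}$, so $p_1(\tau)=\tau$ and $p_2(\tau)=1-\tau$ collide at $\tau=\tfrac12$; then $\rho_i^{(1/2-\eps)}=\rho_i^{(1/2+\eps)}$ has zero Wasserstein distance to itself while $\pi^{1/2-\eps,1/2+\eps}$ has cost $4\eps^2>0$, so $\pi^{t_1,t_2}$ fails to be optimal for every pair straddling $\tfrac12$, no matter how close to the diagonal. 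Near such a time the two-cycle inequality reads exactly $2(2t_1-1)(2t_2-1)\ge 0$, i.e.\ it holds only when $t_1,t_2$ lie on the same side of the collision. (b) More seriously, cutting only "at times at which some inequality degenerates" is both ill-defined (optimality of $\pi^{t_1,t_2}$ is a property of a \emph{pair} of times, not of a single $\tau$) and insufficient. For a two-cycle one computes exactly $\Phi_C(t_1,t_2)=2\langle q_C(t_1),q_C(t_2)\rangle$ with $q_C(\tau)=p_{j_1}(\tau)-p_{j_2}(\tau)$ affine. Taking $d_i=2$, $p_1\equiv(0,0)$ and $p_2(\tau)=(1-2\tau,\eps)$ with $0<\eps<1$, the atoms never collide, so no inequality ever degenerates and your rule produces no cut; yet $\Phi_C(0,1)=2(-1+\eps^2)<0$, so $\gamma_i$ itself is not optimal and the curve is not a geodesic on all of $[0,1]$. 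Consequently the asserted property "every $c$-cyclical inequality holds for all $t_1,t_2$ in $[\tau_{k-1},\tau_k]$" is not established and is false for the partition you define. The strategy is repairable: writing $\Phi_C(t_1,t_2)=A_C(t_1)-(t_2-t_1)B_C(t_1)$ with $A_C(t)=\sum_m\norm{p_{j_m}(t)-p_{j_{m+1}}(t)}^2$ a nonnegative quadratic and $B_C$ affine, one must (i) cut at every zero $t^*_C$ of an $A_C$, where the exact factorization $\Phi_C(t_1,t_2)=\alpha_C(t_1-t^*_C)(t_2-t^*_C)$ with $\alpha_C>0$ shows same-side pairs are fine, and (ii) additionally subdivide $[0,1]$ into intervals of length below $\min_C m_C/(1+\sup_{[0,1]}|B_C|)$ over the cycles with $m_C:=\min_{[0,1]}A_C>0$, which handles the non-degenerate cycles uniformly. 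Both steps produce only finitely many cut points, which restores the conclusion; as written, the argument does not get there.
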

\begin{proposition}[Admissible Couplings for Finitely-Supported Marginals]\label{prop:admissible_couplings_finite_supp}
    If $\Pfbar \subseteq \A$ and  $v$ is $\lambda$-monotone in $\A$, i.e. for any $\rhozero,\rhoone \in  \Pfbar$ with $\gamma^*_i \in \Gamma^*(\rhozero_i,\rhoone_i)$, $\gamma^*=\prod_{i=1}^n \gamma_i^*$, it holds 
    \begin{align*}
         -\int \<x-y,v[\rhozero](x)-v[\rhoone](y)> \d \gamma^*(x,y) \ge \lambda \int \norm{x-y}^2 \d \gamma^*(x,y) \,,
    \end{align*}
    then for any $\gamma_i\in \Gamma(\rhozero_i,\rhoone_i)$, $\gamma=\prod_{i=1}^n \gamma_i$ such that $\rhotau_i:=(I_i^{(\tau)})_{\sharp} \gamma_i$ satisfies $\rhotau\in\A$ for all $\tau\in[0,1]$, it holds
    \begin{align*}
         -\int \<x-y,v[\rhozero](x)-v[\rhoone](y)> \d \gamma(x,y) \ge \lambda \int \norm{x-y}^2 \d \gamma(x,y)  \,.
    \end{align*}
\end{proposition}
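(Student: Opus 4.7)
The strategy is a telescoping argument along the interpolation curve $\rhotau = (I^{(\tau)})_\sharp \gamma$. The plan is to exploit \cref{thm:local_optimality} to decompose $[0,1]$ into finitely many subintervals on which the interpolation behaves like an optimal coupling between its endpoints, apply the hypothesis on each piece, and then telescope a chain of one-sided inequalities.

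Concretely, I would first apply \cref{thm:local_optimality} to each marginal plan $\gamma_i$ to obtain a partition $0 = \tau^{(i)}_0 < \cdots < \tau^{(i)}_{K_i} = 1$ along which $\rhotau_i$ is a constant-speed minimal geodesic on every subinterval. Taking the common refinement $0 = \tau_0 < \tau_1 < \cdots < \tau_K = 1$ across all species and setting $\gamma^{(k)}_i := ((I_i^{(\tau_{k-1})}, I_i^{(\tau_k)}))_\sharp \gamma_i$, each $\gamma^{(k)}_i$ is an optimal plan between $\rho_i^{(\tau_{k-1})}$ and $\rho_i^{(\tau_k)}$, and by the product structure of $\Wbar$ the coupling $\gamma^{(k)} = \prod_i \gamma^{(k)}_i$ belongs to $\Gamma^*(\rho^{(\tau_{k-1})}, \rho^{(\tau_k)})$. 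Since $\rhozero,\rhoone \in \Pfbar$ and pushforwards of finitely supported plans by the affine maps $I_i^{(\tau)}$ remain finitely supported, every $\rho^{(\tau_k)}$ lies in $\Pfbar \subseteq \A$, so the hypothesis is applicable to each pair $(\rho^{(\tau_{k-1})}, \rho^{(\tau_k)})$ with the plan $\gamma^{(k)}$.

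I would then apply the assumed inequality on each subinterval and change variables via $x' = I^{(\tau_{k-1})}(x,y)$ and $y' = I^{(\tau_k)}(x,y)$, noting that $x' - y' = -(\tau_k - \tau_{k-1})(x - y)$. Dividing by the positive factor $\tau_k - \tau_{k-1}$ yields
\begin{align*}
    &-\int \left\langle x - y,\, v[\rho^{(\tau_{k-1})}](I^{(\tau_{k-1})}(x,y)) - v[\rho^{(\tau_k)}](I^{(\tau_k)}(x,y)) \right\rangle \d\gamma(x,y)\\
    &\qquad\geq \lambda (\tau_k - \tau_{k-1}) \int \|x-y\|^2 \d\gamma(x,y).
\end{align*}
Summing over $k = 1, \ldots, K$, the left-hand side telescopes pointwise in $(x,y)$ (the intermediate terms $v[\rho^{(\tau_k)}](I^{(\tau_k)}(x,y))$ cancel in adjacent summands), leaving $-\int \langle x-y,\, v[\rhozero](x) - v[\rhoone](y)\rangle \d\gamma(x,y)$, while the right-hand side accumulates to $\lambda \int \|x-y\|^2 \d\gamma(x,y)$.

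The main technical point is not the computation itself but the preliminary bookkeeping: one must argue that the product coupling $\gamma^{(k)}$ is genuinely optimal in $\Wbar$ (not merely coordinatewise in each $\Wass_2$) and that every interpolant $\rho^{(\tau_k)}$ on the common refinement remains in the domain where monotonicity is hypothesised. The assumption $\rhotau \in \A$ for every $\tau \in [0,1]$ in the statement is exactly what permits the hypothesis to be invoked on each subinterval without leaving $\A$, and the finite-support assumption ensures the partition from \cref{thm:local_optimality} is finite so that the telescoping sum is well-defined with no limiting argument required.
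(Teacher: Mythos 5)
Your proposal is correct and follows essentially the same route as the paper: decompose $[0,1]$ via the common refinement of the per-species partitions from \cref{thm:local_optimality}, apply the hypothesis to the locally optimal couplings $\gamma^{(k)}=\prod_i [I_i^{(\tau_{k-1})},I_i^{(\tau_k)}]_\sharp\gamma_i$, rescale by $\tau_k-\tau_{k-1}$, and telescope. (One cosmetic slip: $I^{(\tau_{k-1})}(x,y)-I^{(\tau_k)}(x,y)=+(\tau_k-\tau_{k-1})(x-y)$, not the negative of it, but your displayed inequality and the conclusion are nonetheless correct.)
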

\begin{proof}
     We select $\gamma_i\in\Gamma(\rhozero_i,\rhoone_i)$ for all $i$. First, we will construct a set of time increments such that this admissible plan is locally optimal for all species in each increment. By \cref{thm:local_optimality}, for each species $i$ there exists a set of points $(\tau_j^{(i)})_{j=1}^{K_i}$ such that $\rho_i^{(\tau)}$ is a constant-speed geodesic for $\tau\in[\tau_{k-1}^{(i)},\tau_{k}^{(i)}]$. Taking the union of all such time steps for each species and renaming them in strictly increasing order,
     \begin{align*}
         \bigcup_{i=1}^n (\tau_j^{(i)})_{j=1}^{K_i} = \{\overline\tau_0<\overline\tau_1<\dots<\overline\tau_{\overline K} \}\,,
     \end{align*}
      the local optimality of $\gamma_i^{(\tau)}$ holds for all species in each interval $[\overline \tau_{k-1},\overline \tau_{k}]$. The set of time steps $(\overline\tau_k)$ has size $\overline K \le \sum_{i=1}^n K_i - 2n+2$, where the $-2n+2$ comes from the same initial and final time steps for all species. Also by \cref{thm:local_optimality}, the optimal coupling from $\rho^{(\overline\tau_{k-1})}_i$ to $\rho^{(\overline\tau_k)}_i$ is given by $\gamma^{(\overline\tau_{k-1},\overline\tau_k)}_i$, where
    \begin{align*}
        \gamma^{(s,t)}_i := [I_i^{(s)},I_i^{(t)}]_\sharp \gamma_i \,.
    \end{align*}
    Hence, by $\lambda$-monotonicity,
    \begin{align*}
        -\int \<x-y,v[\rho^{(\overline \tau_{k-1})}](x) - v[\rho^{(\overline \tau_k)}](y) >\d \gamma^{(\overline\tau_{k-1},\overline\tau_k)}(x,y) \ge \lambda \int \norm{x-y}^2 \d \gamma^{(\overline\tau_{k-1},\overline\tau_k)}(x,y)\,.
    \end{align*}
    Next, we apply the pushforward definition of $\gamma^{(\overline\tau_{k-1},\overline\tau_k)}$ to the terms in the monotonicity inequality,
    \begin{align*}
        \int \norm{x-y}^2 \d \gamma^{(\overline\tau_{k-1},\overline\tau_k)}(x,y)  = (\overline \tau_{k}-\overline \tau_{k-1})^2 \int \norm{x-y}^2 \d\gamma(x,y)  \,, 
    \end{align*}
    and
    \begin{align*}
         &\int \<x-y,v[\rho^{(\overline \tau_{k-1})}](x) - v[\rho^{(\overline \tau_k)}](y) >\d \gamma^{(\overline\tau_{k-1},\overline\tau_k)}(x,y) =
        \\
        &\qquad
        (\overline \tau_k-\overline \tau_{k-1}) \int \<x-y, v[\rho^{(\overline \tau_{k-1})}](I^{(\overline \tau_{k-1})}(x,y)) - v[\rho^{(\overline \tau_k)}](I^{(\overline \tau_{k})}(x,y)) >\d \gamma(x,y) \,.
    \end{align*}
    The monotonicity condition becomes, for every $k\in[1,\dots,\overline K]$,
    \begin{align*}
         - \int \<x-y, v[\rho^{(\overline \tau_{k-1})}](I^{(\overline \tau_{k-1})}(x,y)) - v[\rho^{(\overline \tau_k)}](I^{(\overline \tau_{k})}(x,y)) >\d \gamma(x,y) 
         \ge \left(\overline \tau_{k}-\overline \tau_{k-1}\right) \lambda \int \norm{x-y}^2 \d\gamma(x,y) \,.
    \end{align*}
    Thanks to the integration against the same measure $\gamma$ for every $k$, summing from $k=1$ to $\overline K$ results in
    \begin{align*}
        \lambda \int \norm{x-y}^2 \d \gamma(x,y) &\le -\int \<x-y,v[\rhozero](I^{(0)}(x,y))-v[\rhoone](I^{(1)}(x,y))>\d\gamma(x,y) \\
        &=-\int \<x-y,v[\rhozero](x)-v[\rhoone](y)>\d\gamma(x,y)\,.
    \end{align*}
\end{proof}
In order to extend \cref{prop:admissible_couplings_finite_supp} to all measures in $\Pbar$, we adopt the approximation method from \cite{cavagnari_dissipative_2023}, where $\rhozero,\rhoone\in\Pbar$ are approximated by $ \rho^{(n,0)}, \rho^{(n,1)}\in\Pfbar$ such that $\Wbar(\rho^{(n,0)},\rhozero) \to 0$ and $\Wbar(\rho^{(n,1)},\rhoone) \to 0$ as $n\to \infty$. In order to pass to the $n\to\infty$ limit in the monotonicity condition, we require that the velocity fields $v[\rho](x)$ satisfy an appropriate notion of continuity with respect to $\rho$ and $x$.
\begin{definition}[Strong-Weak Convergence]
    For $x^{(n)},v^{(n)}\in\R^d$, a sequence $(x^{(n)},v^{(n)})$ converges to $(x,v)$ in the strong-weak topology, denoted $(x^{(n)},v^{(n)}) \overset{sw}{\to} (x,v)$, if
    \begin{align*}
        x^{(n)}\to x\,, \quad f(v^{(n)}) \to f(v) \quad \forall \, f\in \mathscr B(\R^d)\,,
    \end{align*}
    where $\mathscr B(\R^d)$ is the set of linear bounded functionals on $\R^d$.
\end{definition}
The set $C^{sw}_2$ is given by
\begin{align*}
        C^{sw}_2:= \{\zeta:&\R^d \times \R^d \to\R \ 
        | \ \lim_{n\to\infty} \zeta(x^{(n)},v^{(n)}) = \zeta(x,v) \quad \forall \, (x^{(n)},v^{(n)}) \overset{sw}{\to} (x,v)\,;\  \forall\, \varepsilon>0\\
        & \exists \, A_\varepsilon\ge 0 \text{ such that } |\zeta(x,v)| \le A_\varepsilon(1+\norm{x}^2) + \varepsilon \norm{v}^2\,, \forall\, (x,v)\in \R^d \times \R^d \}\,.
\end{align*}
\begin{definition}[Demicontinuity of $v$]\label{def:demicontinuity_v}
    The velocity field $v:
    \A\times \R^d\to\R^d$ is \emph{demicontinuous}  at $\rho\in\A$ if for any $\rho^{(n)} \to \rho$ in $(
    \A,\Wbar)$, it holds that
    \begin{align*}
        \lim_{n \rightarrow \infty} \int \zeta(x,v[\rho^{(n)}](x))\d \rho^{(n)}(x) = \int  \zeta(x,v[\rho](x))\d \rho(x)\,, \quad \forall \ \zeta \in C^{sw}_2\,.
    \end{align*}
\end{definition}
\begin{assumption}[Demicontinuity of $v$]\label{assump:demicontinuity_v}
   The velocity field $v$ is \emph{demicontinuous} in $\A$.
\end{assumption} 
\begin{corollary}[Admissible Couplings]\label{cor:admissible_couplings}
    Let \cref{assump:demicontinuity_v} hold. If $\Pfbar \subseteq\A $, and for any $\rhozero,\rhoone \in \A$ with $\gamma^* \in \Gamma^*(\rhozero,\rhoone)$ it holds 
    \begin{align*}
       -\int \<x-y,v[\rhozero](x)-v[\rhoone](y)> \d \gamma^*(x,y)\ge  \lambda \int \norm{x-y}^2 \d \gamma^*(x,y) \,,
    \end{align*}
    then for any $\gamma\in \Gamma(\rhozero,\rhoone)$ such that $\rhotau\in\A$ for all $\tau\in[0,1]$,
    \begin{align*}
       - \int \<x-y,v[\rhozero](x)-v[\rhoone](y)> \d \gamma(x,y)\ge  \lambda \int \norm{x-y}^2 \d \gamma(x,y) \,.
    \end{align*}
\end{corollary}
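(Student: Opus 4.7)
My plan is to reduce to the finite-support case using \cite[Theorem 6.2]{cavagnari_lagrangian_2023} (restated as \cref{thm:local_optimality} in the appendix), then extend to general measures by an approximation argument. The key insight is that although an admissible plan $\gamma$ is not globally induced by a geodesic, it is \emph{locally} so: one can partition $[0,1]$ into finitely many subintervals on which the interpolants $\rho^{(\tau)} := (I^{(\tau)})_\sharp \gamma$ form minimal constant-speed geodesics, and then apply the hypothesis on each piece.

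First, I would treat the case $\rhozero,\rhoone\in\Pfbar$. Given $\gamma_i\in\Gamma(\rhozero_i,\rhoone_i)$, applying \cref{thm:local_optimality} to each species and taking the common refinement yields a partition $0=\overline\tau_0<\overline\tau_1<\dots<\overline\tau_{\overline K}=1$ such that on each $[\overline\tau_{k-1},\overline\tau_k]$, the restricted plan $\gamma_i^{(\overline\tau_{k-1},\overline\tau_k)} := [I_i^{(\overline\tau_{k-1})},I_i^{(\overline\tau_k)}]_\sharp\gamma_i$ is an optimal coupling between $\rho_i^{(\overline\tau_{k-1})}$ and $\rho_i^{(\overline\tau_k)}$. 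Since $\Pfbar\subseteq\Pbar$, the monotonicity hypothesis applies with this optimal plan, giving
\begin{equation*}
    -\int \<x-y,v[\rho^{(\overline\tau_{k-1})}](x)-v[\rho^{(\overline\tau_k)}](y)>\d\gamma^{(\overline\tau_{k-1},\overline\tau_k)}(x,y)\ge \lambda\int\|x-y\|^2\,\d\gamma^{(\overline\tau_{k-1},\overline\tau_k)}(x,y).
\end{equation*}

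Second, I would rewrite both sides via the pushforward: the right-hand side becomes $\lambda(\overline\tau_k-\overline\tau_{k-1})^2\int\|x-y\|^2\d\gamma(x,y)$, while the left-hand side becomes $(\overline\tau_k-\overline\tau_{k-1})\int\<x-y,v[\rho^{(\overline\tau_{k-1})}](I^{(\overline\tau_{k-1})}(x,y))-v[\rho^{(\overline\tau_k)}](I^{(\overline\tau_k)}(x,y))>\d\gamma(x,y)$. Dividing through by $\overline\tau_k-\overline\tau_{k-1}$ and summing over $k$ makes the velocity-difference terms telescope (since all integrals are against the fixed reference plan $\gamma$), yielding
\begin{equation*}
    -\int\<x-y,v[\rhozero](x)-v[\rhoone](y)>\d\gamma(x,y)\ge\lambda\sum_{k=1}^{\overline K}(\overline\tau_k-\overline\tau_{k-1})\int\|x-y\|^2\d\gamma(x,y) = \lambda\int\|x-y\|^2\d\gamma(x,y),
\end{equation*}
which closes the finite-support case. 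This is exactly the content of the Proposition and Corollary in \cref{sec:optimal_admissible_plans}.

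Third, for arbitrary $\rhozero,\rhoone\in\Pbar$, I would approximate $\gamma\in\Gamma(\rhozero,\rhoone)$ by a sequence of finitely supported plans $\gamma^{(n)}\in\Gamma(\rhozero^{(n)},\rhoone^{(n)})$ converging in $\Wass_2$ (on the product space), which automatically forces $\Wass_2$-convergence of the marginals. Each $\gamma^{(n)}$ satisfies the finite-support inequality. Passing to the limit on the quadratic right-hand side is immediate from $\Wass_2$-convergence of $\gamma^{(n)}\to\gamma$; the right-hand side's $n$-uniform control via the optimal-plan hypothesis (applied between $\rhozero^{(n)}$ and $\rhozero$, and between $\rhoone^{(n)}$ and $\rhoone$) also gives bounds on $\int\<x,v[\rhozero^{(n)}](x)>\d\rho^{(n)}$ which allow to control the bilinear velocity term. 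The main obstacle is this limit passage on the velocity term, since the theorem assumes no explicit continuity of $\rho\mapsto v[\rho]$: the argument must exploit the monotonicity hypothesis itself as a weak substitute for continuity, using it to sandwich the velocity integrals along the approximating sequence. This bootstrap from the monotonicity hypothesis to weak continuity of the bilinear form is the delicate step; all other pieces are essentially the telescoping/pushforward bookkeeping of the finite-support case.
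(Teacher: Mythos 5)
Your proposal follows essentially the same route as the paper: the finite-support case is handled exactly as in the paper's Proposition via \cite[Theorem 6.2]{cavagnari_lagrangian_2023}, the common refinement of the species-wise partitions, and the telescoping sum against the fixed plan $\gamma$; the extension to general marginals is the approximation procedure of \cite[Theorem 7.6]{cavagnari_lagrangian_2023}, which is precisely what the paper's one-line proof of the corollary defers to. The limit passage on the velocity term that you flag as delicate is indeed the content of that cited argument, so your outline is accurate and complete at the level of detail the paper itself provides.
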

\begin{proof}
The proof follows exactly as the approximation procedure in the proof of \cite[Theorem 7.6]{cavagnari_lagrangian_2023}, with the convergent sequences defined for each species. 
\end{proof}
In particular, if $\A=\Pbar$, then $\Pfbar \subseteq\A $ and so \cref{thm:admissible_couplings} holds.
\begin{remark}\label{rmk:monotonicity-equiv-counterex}
    \cref{cor:admissible_couplings} does not necessarily hold for sets $\A$ which do not contain $\Pfbar$. For example, consider the single-species setting where $v[\rho]=-\nabla \log \rho$. The domain of $v:L^1(\R^d) \to \R^d$ excludes measures with finite support. Writing $v[\rho]=-\nabla \delta_\rho \Hc(\rho)$, where $\Hc=\int \rho \log \rho$, it holds that 
    \begin{align*}
        \int \<x-y,\nabla\log\rho(x)-\nabla\log \rhot(y) > \d \gamma^*(x,y) \ge 0
    \end{align*}
    for $\gamma^*\in\Gamma^*(\rho,\rhot)$, because $\Hc$ is $0$-displacement convex over absolutely continuous measures. We check if the inequality still holds for the admissible plan $\gamma(x,y)=\rho(x)\rhot(y)$:
    \begin{align*}
        \int \<x-y,\nabla \log \rho(x)-\nabla \log \rhot(y) > \d \rho(x) \d \rhot(y) &= -\int \<x,\nabla \rho(x)> \d x - \int \<y,\nabla \rhot(y)>\d y 
        \le -2d<0\,,
    \end{align*}
    so the monotonicity inequality does not hold for general $\gamma\in\Gamma(\rho,\rhot)$.
\end{remark}

\printbibliography
\end{document}